\numberwithin{equation}{section}
\title{Finite presentations for K\"ahler groups with arbitrary finiteness properties}
\author{Claudio Llosa Isenrich}
\address{Mathematical Institute, Andrew Wiles Building, University of Oxford, Oxford OX2 6GG, UK}
\email{llosaisenric@maths.ox.ac.uk}
\thanks{This work was supported by a EPSRC Research Studentship and by the German National Academic Foundation}
\keywords{K\"ahler groups, Homological finiteness properties, Finite presentations}
\subjclass[2010]{20F05, 20F34, 32J27 (20J05)}
\begin{document}

\newcommand{\QQ}{{\mathds Q}}
\newcommand{\RR}{{\mathds R}}
\newcommand{\NN}{{\mathds N}}
\newcommand{\ZZ}{{\mathds Z}}
\newcommand{\CC}{{\mathds C}}
\newcommand{\eps}{{\epsilon}}

\theoremstyle{plain}
\newtheorem{theorem}{Theorem}[section]
\newtheorem{conjecture}[theorem]{Conjecture}
\newtheorem{corollary}[theorem]{Corollary}
\newtheorem{lemma}[theorem]{Lemma}
\newtheorem{proposition}[theorem]{Proposition}
\newtheorem{question}{Question}

\theoremstyle{definition}
\newtheorem{remark}[theorem]{Remark}
\newtheorem*{acknowledgements*}{Acknowledgements}
\newtheorem{example}[theorem]{Example}

\renewcommand{\proofname}{Proof}

\begin{abstract}
 We construct the first explicit finite presentations for a family of K\"ahler groups with arbitrary finiteness properties, answering a question of Suciu.
\end{abstract}

\maketitle

\section{Introduction}
A \textit{K\"ahler group} is a finitely presented group that can be realised as fundamental group of a compact K\"ahler manifold. The difficult question of determining which finitely presented groups are K\"ahler groups was raised by Serre in the 1950s. While this has been a topic of active research for many years now, and a lot of very interesting results have been proved, we are still remarkably far from answering the question in a satisfactory way. For a comprehensive introduction to K\"ahler groups see \cite{ABCKT-95}. In this work we address the finiteness properties of K\"ahler groups.

A group $G$ is said to be of \textit{finiteness type $\mathcal{F}_r$} if it has a $K(G,1)$ with finite $r$-skeleton. It was shown by Dimca, Papadima, and Suciu \cite{DimPapSuc-09-II} that for every $r\geq 3$ there is a K\"ahler group of type $\mathcal{F}_{r-1}$ and not of type $\mathcal{F}_r$. These were the first examples of K\"ahler groups with arbitrary finiteness properties.

Suciu subsequently asked if it was possible to construct explicit presentations of such groups. In this work we will construct an explicit presentation for the examples in \cite{DimPapSuc-09-II}, thus answering Suciu's question. Our main result is

\begin{theorem}
\label{thmFinPres}
For each $r\geq 3$, the following finitely presented group $K_r$ is the fundamental group of a compact K\"ahler manifold; this group is of type $\mathcal{F}_{r-1}$, but not of type $\mathcal{F}_r$.
 \[
  K_r=\left\langle \left.   
  \mathcal{X}^{(r)}
  \right|
  \mathcal{R}^{(r)}_1, \mathcal{R}^{(r)}_2
  \right\rangle
 \]
where, for $r\geq 4$,
\[
 \mathcal{X}^{(r)}=\left\{ \begin{array}{l} 
 c_i,d,f_i^{(k)},g_i^{(k)},\\
 k=2,\cdots,r,i=1,2
 \end{array}\right\},
\]
\[
 \mathcal{R}_1^{(r)}=
 \left\{
 \begin{array}{l}
 (f_i^{(k)})^{\eps}c_j(f_i^{(k)})^{-\eps}(f_i^{(l)})^{\eps}(c_j)^{-1}(f_i^{(l)})^{-\eps},\\ (f_i^{(k)})^{\eps}d(f_i^{(k)})^{-\eps}(f_i^{(l)})^{\eps}d^{-1}(f_i^{(l)})^{-\eps},\left[\left[f_1^{(r)},f_2^{(r)}\right], d\right],\\
   \left[c_i,g_{j}^{(k)}\right] \left[(c_j)^{-1}f_j^{(k)},c_i\right],~ \left[f_i^{(k)},f_j^{(l)}\right]V_{i,j,1}^{-1},~\left[f_i^{(k)},g_j^{(l)}\right]W_{i,j,1}^{-1},\\  \left[c_ig_i^{(k)},c_jg_j^{(l)}\right] V_{i,j,1}^{-1},
   i,j=1,2, \eps=\pm 1, k,l=2,\cdots, r, l\neq k
 \end{array}
 \right\},
\]
\[
 \mathcal{R}_2^{(r)}=\left\{d^{-1}c_1^{-1}f_1^{(k)}c_2^{-1}(f_1^{(k)})^{-1}d^{-1}f_2^{(k)}c_1 (f_2^{(k)})^{-1}c_2,~ S^{(k)}\cdot T^{(k)}, k=2,\cdots,r \right\},
\]
where $V_{i,j,1}(A)$, $W_{i,j,1}(A)$, $i,j=1,2$, $S^{(k)}(A)$ and  $T^{(k)}(A)$ are words in the free group $F(A)$ in the generators $A=\left\{c_i,d,f_i^{(k)},g_i^{(k)},i=1,2,k=2,\cdots,r-1\right\}$ which will be defined in due course.

And $\mathcal{X}^{(3)}$, $\mathcal{R}^{(3)}_1$ and $\mathcal{R}^{(3)}_2$ are as described in Theorem \ref{thmFinPresV1}.
\end{theorem}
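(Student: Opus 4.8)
The plan is to follow the strategy of Dimca--Papadima--Suciu \cite{DimPapSuc-09-II} and realise $K_r$ as the fundamental group of a fibre of a surjective holomorphic map $f\colon E^n\to E$ from a product of $n\geq r$ elliptic curves to an elliptic curve, restricted to a suitable subvariety or given directly on the product, so that $K_r$ appears as the kernel of the induced map on fundamental groups. Concretely, starting from the homomorphism $\ZZ^{2n}\to\ZZ^2$ coming from $f$ on $\pi_1$, the kernel is a group of type $\mathcal{F}_{n-1}$ but not $\mathcal{F}_n$ by the Bieri--Stallings/Bestvina--Brady style computation carried out in \cite{DimPapSuc-09-II}; setting $n=r$ gives the asserted finiteness properties for free. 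The substance of the theorem is therefore not the finiteness properties nor K\"ahlerianity (both inherited from the cited construction), but the \emph{explicitness}: that the specific finite generating set $\mathcal{X}^{(r)}$ and the specific finite relation sets $\mathcal{R}_1^{(r)},\mathcal{R}_2^{(r)}$ present this kernel.

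First I would fix notation for the K\"ahler manifold: take $X$ to be (a resolution of) the subvariety of $E^r$ cut out as in \cite{DimPapSuc-09-II}, with the holomorphic map $h\colon X\to E$ whose generic fibre $Y$ has $\pi_1(Y)=K_r$. I would then prove the base case $r=3$ separately --- this is Theorem~\ref{thmFinPresV1}, whose presentation $\mathcal{X}^{(3)},\mathcal{R}^{(3)}_1,\mathcal{R}^{(3)}_2$ is established there --- and set it up so that the general case is obtained by an inductive ``doubling/extension'' step: passing from $E^{r-1}$ to $E^r$ adds one elliptic-curve factor, which on the level of the fibre group corresponds to adjoining the generators $c_i, f_i^{(r)}, g_i^{(r)}$ (and using the already-present $d$) together with the new relations in $\mathcal{R}_1^{(r)}$ and $\mathcal{R}_2^{(r)}$ indexed by $k=r$. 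The words $V_{i,j,1}, W_{i,j,1}, S^{(k)}, T^{(k)}$ record how the added generators are expressed in terms of the previous ones after pushing forward along the fibration; these are exactly the ``words defined in due course'' referenced in the statement, and I would define them via the explicit action coming from the monodromy of $h$.

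The key steps, in order, are: (1) recall the topological description of $X$ and $h$ and identify $\pi_1(Y)$ as $\ker(\pi_1(X)\to\pi_1(E))$ using the long exact sequence of the fibration together with the fact that $\pi_1(E)\cong\ZZ^2$ is the only nontrivial homotopy in the base and the fibre is connected; (2) compute a presentation of $\pi_1(X)$ from a CW/handle decomposition (or from the product structure of $E^r$ with the equations imposed), obtaining finitely many generators and relations; (3) apply the Reidemeister--Schreier rewriting process to this presentation with respect to the (infinite-index) normal subgroup $K_r$, choosing the Schreier transversal corresponding to $\ZZ^2$ generated by the images of two distinguished elements; (4) observe, as in \cite{DimPapSuc-09-II}, that although Reidemeister--Schreier a priori yields infinitely many generators and relations, the module of relations is finitely generated over $\ZZ[\ZZ^2]$ up to the relevant degree because $K_r$ is of type $\mathcal{F}_{r-1}$ and here $r-1\geq 2$, so a \emph{finite} sub-presentation suffices; (5) carefully extract that finite sub-presentation and simplify it by Tietze transformations until it matches $\mathcal{X}^{(r)}, \mathcal{R}_1^{(r)}, \mathcal{R}_2^{(r)}$ exactly, verifying each displayed relator is a consequence of the rewritten relations and conversely that these relators suffice; (6) feed in the base case $r=3$ and run the induction on $r$.

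The main obstacle is step (5): verifying that the honest (large, Reidemeister--Schreier-produced) presentation is Tietze-equivalent to the compact one written in the theorem. This requires (a) making the finiteness-of-$\mathcal{F}_{r-1}$ argument effective enough to pin down \emph{which} finitely many relators are needed --- in particular showing the higher commutator relations $[c_ig_i^{(k)},c_jg_j^{(l)}]V_{i,j,1}^{-1}$ and the ``mixed'' relations $[[f_1^{(r)},f_2^{(r)}],d]$ genuinely close up the relation module in degree $\leq r-1$; and (b) a bookkeeping-heavy chain of elementary group-theoretic identities showing the $V,W,S,T$-words behave coherently under the inductive extension, i.e.\ that adjoining the $k=r$ factor to the $E^{r-1}$-presentation is an HNN-type extension whose stable letters and relations are precisely the new generators and new relators. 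I expect (b) to consume most of the length of the proof but to be routine in character; (a) is where the real input from \cite{DimPapSuc-09-II} is used and must be invoked with care. Finally I would double-check K\"ahlerianity: $X$ is projective (a smooth model of a subvariety of an abelian variety), and $Y$, being a smooth fibre of a holomorphic map between projective manifolds, is itself a quasi-projective --- in fact, after the construction of \cite{DimPapSuc-09-II}, a compact K\"ahler --- manifold, so $K_r=\pi_1(Y)$ is a K\"ahler group as claimed.
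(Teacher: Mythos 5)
There is a genuine gap, on two levels. First, your description of the underlying geometry is not the construction of \cite{DimPapSuc-09-II}: the group $K_r$ is not the kernel of a map $\ZZ^{2n}\to\ZZ^2$ coming from a product of elliptic curves (such a kernel is free abelian, hence of type $\mathcal{F}_\infty$, and the finiteness properties would not follow). The relevant object is the sum map $f=\sum_i\alpha_i\colon S^{(1)}\times\cdots\times S^{(r)}\to E$ from a product of genus-$2$ surfaces, each a $2$-fold branched cover of the elliptic curve $E$, and $K_r=\ker\bigl(\phi\colon\pi_1S^{(1)}\times\cdots\times\pi_1S^{(r)}\to\pi_1E\bigr)$, i.e.\ a subdirect product of surface groups; the exotic finiteness properties come from Theorem A of \cite{DimPapSuc-09-II} for exactly this kernel, and Kählerness and the $\mathcal{F}_{r-1}$/not-$\mathcal{F}_r$ statements are indeed inherited, as you say, but only once $K_r$ is identified with this kernel.

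Second, and more seriously, your steps (3)--(5) do not produce the theorem. Reidemeister--Schreier applied to the infinite-index normal subgroup $K_r\leq G$ yields infinitely many generators and relators, and the observation that $K_r$ is of type $\mathcal{F}_{r-1}$ (hence finitely presented for $r\geq3$) does not tell you \emph{which} finite subcollection suffices; abstract finite presentability gives no effective bound, so ``carefully extract that finite sub-presentation'' is precisely the missing content, not a routine step. The paper's route is different and this is where the real input lies: one writes $K_r$ as the fibre product of the two surjections $f_1\colon\pi_1S^{(r)}\to\pi_1E$ and $f_2\colon\pi_1S^{(1)}\times\cdots\times\pi_1S^{(r-1)}\to\pi_1E$ (surjectivity of the second restriction of $\phi$ is Lemma \ref{lemProj}), chooses the aspherical presentation $\langle x_1,x_2\mid[x_1,x_2]\rangle$ of $\pi_1E$ with $\pi_2\mathcal{Q}=1$, and applies the effective asymmetric $1$-$2$-$3$ Theorem of \cite{BriHowMilSho-13}; the vanishing of $\pi_2\mathcal{Q}$ forces the correction module $N_A\cap N_B$ to be trivial, so the algorithm terminates with the explicit finite presentation of Theorem \ref{thmFinPresV1}, which is then reduced to $\mathcal{X}^{(r)},\mathcal{R}_1^{(r)},\mathcal{R}_2^{(r)}$ by concrete Tietze moves (Lemmas \ref{lemProofMT1}--\ref{lemProofMT3}). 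Your proposed induction on $r$ via an ``HNN-type extension'' also misdescribes the structure: adjoining the $r$-th factor is a fibre-product/subdirect-product phenomenon, not an HNN extension, and the paper needs no induction at all. Without the fibre-product decomposition and the effective $1$-$2$-$3$ machinery (or an equivalent effective substitute), your outline cannot certify that the displayed relators present $K_r$.
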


To be more precise, the group $K_r$ is a subdirect product of $r$ surface groups, the relations $\mathcal{R}_1$ correspond to the fact that elements of different factors in a direct product of groups commute and the relations $\mathcal{R}_2$ correspond to the surface group relations in the factors. 

To prove this result, we will apply algorithms developed by Baumslag, Bridson, Miller and Short \cite{BauBriMilSho-00} and by Bridson, Howie, Miller and Short \cite{BriHowMilSho-13} to construct explicit finite presentations for the groups of Dimca, Papadima, and Suciu \cite{DimPapSuc-09-II}. This leads to the explicit presentations given in Theorem \ref{thmFinPresV1}. We will then show by computations with Tietze transformations that these presentations simplify into the form of Theorem \ref{thmFinPres}.

 As a direct consequence of Theorem \ref{thmFinPres} we obtain
\begin{corollary}
 For $r\geq 3$, the first Betti number of the K\"ahler group $K_r$ is $b_1(K_r)=4r-2$.
\end{corollary}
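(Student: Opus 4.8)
The plan is to read off $b_1(K_r)$ directly from the finite presentation supplied by Theorem~\ref{thmFinPres}, using the standard fact that for a finitely presented group $G=\langle X\mid R\rangle$ one has $b_1(G)=\dim_{\QQ}H_1(G;\QQ)=|X|-\operatorname{rk}_{\QQ}M$, where $M$ is the $|R|\times|X|$ matrix whose rows are the exponent-sum vectors of the relators in $R$ (equivalently, $H_1(G;\QQ)$ is the cokernel of $M$ over $\QQ$).

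First I would count the generators. For $r\ge 4$ the set $\mathcal{X}^{(r)}$ consists of $c_1,c_2,d$ together with $f_i^{(k)},g_i^{(k)}$ for $i\in\{1,2\}$ and $k\in\{2,\dots,r\}$, so $|\mathcal{X}^{(r)}|=3+2(r-1)+2(r-1)=4r-1$; for $r=3$ one checks from Theorem~\ref{thmFinPresV1} that the analogous count again gives $4r-1$ generators. It therefore suffices to prove that the rows of $M$ span a subspace of $\QQ^{\mathcal{X}^{(r)}}$ of dimension exactly one.

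Next I would abelianise the relators family by family. Up to the contributions of the auxiliary words $V_{i,j,1}$ and $W_{i,j,1}$, every word in $\mathcal{R}_1^{(r)}$ is a product of commutators or a relator of transvection type in which each generator occurs with total exponent zero, so it maps to $0$; likewise the relators $S^{(k)}\cdot T^{(k)}$ of $\mathcal{R}_2^{(r)}$ come from (conjugated) surface relators. A direct computation shows that the one remaining family, $d^{-1}c_1^{-1}f_1^{(k)}c_2^{-1}(f_1^{(k)})^{-1}d^{-1}f_2^{(k)}c_1(f_2^{(k)})^{-1}c_2$, abelianises to $-2\,\overline d$ for every $k=2,\dots,r$. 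Hence, once the auxiliary words $V_{i,j,1},W_{i,j,1},S^{(k)},T^{(k)}$ have been written out explicitly in the course of proving Theorem~\ref{thmFinPres}, it remains only to verify that each of their exponent-sum vectors lies in $\ZZ\,\overline d$. Granting this, the row space of $M$ over $\QQ$ is precisely the line $\QQ\,\overline d$, so $H_1(K_r;\QQ)\cong \QQ^{4r-1}/\QQ\,\overline d$ has dimension $4r-2$.

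I expect the only real work — and it is routine bookkeeping — to lie in that last verification: confirming, relator by relator, that none of $V_{i,j,1},W_{i,j,1},S^{(k)},T^{(k)}$ contributes an exponent sum outside $\ZZ\,\overline d$. This should be transparent from the way those words are produced, namely as outputs of the rewriting and Tietze-transformation procedure applied to products of commutators, so that their abelianisations are forced to be trivial (or to lie in $\ZZ\,\overline d$ once the conjugations are accounted for). As an independent cross-check one can also compute $b_1(K_r)$ from the realisation of $K_r$ as a subdirect product of $r$ surface groups via the five-term exact sequence of the corresponding extension with abelian quotient, which yields the same value $4r-2$.
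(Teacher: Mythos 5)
Your route is essentially the paper's: abelianise the presentation of Theorem \ref{thmFinPres} and read off the rank. The verification you defer is immediate from the paper's explicit definitions rather than from any general principle about the rewriting procedure: by Lemma \ref{lemVW}, $V_{i,j,1}$ and $W_{i,j,1}$ abelianise to $0$ when $i=j$ and to $\pm\overline{d}$ when $i\neq j$ (so they are \emph{not} all trivial, e.g.\ $V_{1,2,1}=d^{-1}$), and $S^{(k)}$, $T^{(k)}$ each abelianise to $\overline{d}$. Hence every relator of $\mathcal{R}_1^{(r)}\cup\mathcal{R}_2^{(r)}$ has exponent-sum vector in $\ZZ\,\overline{d}$, and the relator $\bigl[f_1^{(k)},f_2^{(l)}\bigr]V_{1,2,1}^{-1}$ contributes exactly $\overline{d}$; this is why the paper gets the slightly stronger integral statement $H_1(K_r,\ZZ)\cong\ZZ^{4r-2}$ (the relation $d=1$, not merely $2\overline{d}=0$ over $\QQ$), while your rational computation suffices for $b_1$. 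For $r\geq 4$ your count of $4r-1$ generators and the rank-one row space are correct, so that part of the argument goes through.

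The one genuine slip is the case $r=3$, where Theorem \ref{thmFinPres} defines $K_3$ by the presentation of Theorem \ref{thmFinPresV1}. There the generating set is $\{x_1,x_2\}\cup A\cup B$ with $|A|=7$ and $|B|=3$, i.e.\ $12=4r$ generators (there are no $g_i^{(3)}$; instead $\delta$, $c_1^{(3)}$, $c_2^{(3)}$ occur, and $f_i^{(3)}$ is identified with $x_i$), so your claim that ``the analogous count again gives $4r-1$ generators'' with a one-dimensional row space is false as stated. The relators $[x_1,x_2]\,\delta\, d$ and $\delta^{-2}x_2x_1\bigl[(x_1c_1^{(3)})^{-1},(x_2c_2^{(3)})^{-1}\bigr]x_2^{-1}x_1^{-1}$ abelianise to $\overline{\delta}+\overline{d}$ and $-2\overline{\delta}$ respectively, so the row space is the plane spanned by $\overline{d}$ and $\overline{\delta}$, and one gets $b_1=12-2=10=4r-2$. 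The conclusion survives, but for $r=3$ you must either redo the bookkeeping with the correct generator list or first eliminate $\delta$ (and note the role of $c_i^{(3)}$ in place of $g_i^{(3)}$) by a Tietze move before your formula $|X|-\operatorname{rk}M$ gives the stated answer.
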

\begin{proof}
 From the relations in the presentation in Theorem \ref{thmFinPres}, the definition of $V_{i,j,1}$ and $W_{i,j,1}$ in Lemma \ref{lemVW}, and the definition of $S^{(k)}$ and $T^{(k)}$ after Lemma \ref{lemCommRel2}, it is not hard to see that the only non-trivial relation in the canonical presentation for $$H_1(K_r,\ZZ)=\pi_1K_r/\left[\pi_1K_r,\pi_1K_r\right],$$ besides the commutator relations between the generators, is $d=1$. Hence, $H_1(K_r,\ZZ)$ is freely generated as an abelian group by $c_i$, $f_i^{(k)}$, $g_i^{(k)}$, where $i=1,2$, $k=2,\cdots,r$. It follows that $H_1(K_r,\ZZ)\cong \ZZ^{4r-2}$ and $b_1(K_r)=4r-2$.
\end{proof}

In recent work the author of this paper generalised Dimca, Papadima and Suciu's construction \cite{Llo-16-II} and the techniques used here give explicit finite presentations for many of the resulting groups. 

More generally there has been an increasing interest in the connection between K\"ahler groups and subgroups of direct products of surface groups. Starting with the fundamental work by Delzant and Gromov \cite{DelGro-05} on cuts in K\"ahler groups, a close connection between K\"ahler groups acting on CAT(0) cube complexes and subgroups of direct products of surface groups has emerged. Roughly speaking, a K\"ahler group which admits sufficiently many actions on a CAT(0) cube complex is a subgroup of a direct product of surface groups. Important recent results in this direction are Py's work \cite{Py-13} in which he shows that any subgroup of a Coxeter group or a RAAG is virtually a subgroup of a direct product of surface groups and Delzant and Py's work \cite{DelPy-16} showing that if a K\"ahler group acts geometrically on a locally finite CAT(0) cube complex then it is virtually the direct product of a free abelian group and finitely many surface groups. These results have intensified the interest in understanding the K\"ahler subgroups of direct products of surface groups and we anticipate that the techniques developed in this paper to give explicit descriptions of such groups will be useful in this context. Similarly explicit constructions of presentations should also illuminate some of the examples of the K\"ahler groups constructed by Bogomolov and Katzarkov \cite{BogKat-98}.

This work is structured as follows: In Section \ref{secDPS} we give a brief explanation of the construction of Dimca, Papadima and Suciu's groups. The proof of Theorem \ref{thmFinPresV1} is spread over Sections 3 and 4. In Section \ref{secProof} we show how one can simplify this presentation for $r\geq 4$ and thereby prove Theorem \ref{thmFinPres}.

\begin{acknowledgements*}
 I am very grateful to my PhD advisor, Martin Bridson, for his great support, his useful comments and in particular for referring me to the algorithms in \cite{BauBriMilSho-00} and \cite{BriHowMilSho-13}, which form the base of this work.
 After my talk about a method for constructing K\"ahler groups from maps onto complex tori \cite{Llo-17} at the September 2015 conference ``Finiteness Conditions in Topology and Algebra'' in Belfast, Alex Suciu asked me if I can give explicit finite presentations for K\"ahler groups with arbitrary finiteness properties. I am very grateful to him for this stimulus.
\end{acknowledgements*}

\section{Dimca, Papadima, and Suciu's examples}

\label{secDPS}

In \cite{DimPapSuc-09-II} Dimca, Papadima and Suciu constructed a family of examples of K\"ahler groups; in fact \textit{projective groups}, i.e. fundamental groups of compact complex projective varieties. We will sketch their construction.

Let $E=\CC/\Lambda$, $\Lambda \cong \ZZ ^2$, be an elliptic curve and let $B= \left\{b_1,\cdots, b_{2g-2}\right\}\subset E$ be a finite subset. Then $E$ has a 2-fold branched covering $S_g\rightarrow E$ by a surface $S_g$ of genus $g$ with branching locus $B$ as indicated in Figure \ref{fig:BranchedCover}.

\begin{figure}[ht] 
 \includegraphics[width=12cm,height=18cm,keepaspectratio]{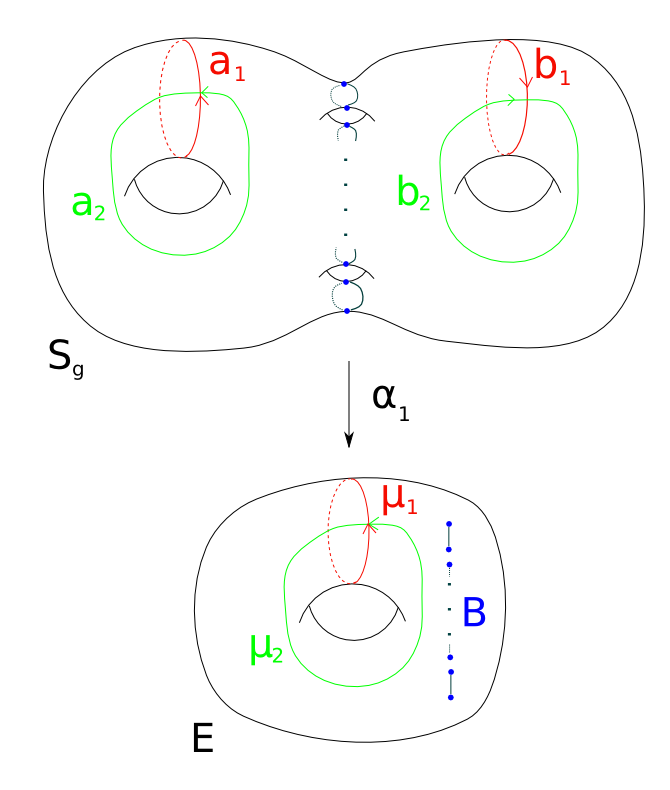}
 \caption{The 2-fold branched covering of $E$}
 \label{fig:BranchedCover}
\end{figure}

Let $\alpha_i:S^{(i)}=S_{g_i}^{(i)}\rightarrow E$, $i=1,\cdots,r$ be 2-fold branched covers of this form. Using addition in $E$ we can define the proper map
\[
 f=\sum_{i=1}^r \alpha_i: S^{(1)}\times \cdots \times S^{(r)}\rightarrow E.
\]

This map is a submersion with connected fibres away from a finite subset $C\subset S^{(1)}\times \cdots \times S^{(r)}$ with $f(C)=B^{(1)}\times\cdots\times B^{(r)}$. Hence, by the Ehresmann fibration theorem, all of its \textit{generic smooth fibres} over the open subset $E\setminus f(C)$ of regular values are homeomorphic. We denote by $H$ the generic smooth fibre of $f$. Dimca, Papdima and Suciu \cite[Theorem A]{DimPapSuc-09-II} proved that $\pi_1 H$ satisfies

\begin{theorem}
\label{thmDPSA}
 For all $r\geq 3$, the group $\pi_1 H$ is projective and therefore K\"ahler. Furthermore, $\pi_1 H$ is of type $\mathcal{F}_{r-1}$ but not of type $\mathcal{F}_r$.
\end{theorem}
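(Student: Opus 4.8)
The plan is to establish the three conclusions in turn, after first identifying $\pi_1 H$ concretely inside a direct product of surface groups. \emph{Projectivity.} Choose a regular value $e_0\in E\setminus f(C)$ of $f$. Then $H=f^{-1}(e_0)$ is cut out transversally by the point $e_0$, so it is a smooth, connected (by hypothesis) closed subvariety of the smooth complex projective variety $S^{(1)}\times\cdots\times S^{(r)}$; in particular $H$ is itself a smooth connected projective variety, so $\pi_1 H$ is a projective group and, a fortiori, a K\"ahler group. It therefore only remains to analyse the finiteness properties of $\pi_1 H$.

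\emph{Identifying $\pi_1 H$.} In the construction each $\alpha_{i*}\colon\pi_1 S^{(i)}\to\pi_1 E=\ZZ^{2}$ is surjective (this forces $g_i\geq 2$, since an unramified double cover of $E$ would be an isogeny with $\pi_1$-image of index two; surjectivity on rational homology is automatic by the transfer, and the integral statement is arranged by the choice of branch data). Hence $\psi:=\sum_i\alpha_{i*}\colon\pi_1 S^{(1)}\times\cdots\times\pi_1 S^{(r)}\to\ZZ^{2}$ is surjective, and is even surjective on each factor. I claim $\pi_1 H\cong\ker\psi$. Pulling back the universal cover $\CC\to E$ along $f$ yields a connected covering $\widehat N\to N:=S^{(1)}\times\cdots\times S^{(r)}$ with deck group $\ZZ^{2}$, so that $\pi_1\widehat N=\ker\psi$, together with a proper holomorphic map $\widehat f\colon\widehat N\to\CC$ which is a submersion away from a discrete set $\widehat C$ and still has $H$ as generic fibre. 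Near each point of $\widehat C$ the map $\widehat f$ has the normal form $(z_1,\dots,z_r)\mapsto z_1^{2}+\cdots+z_r^{2}$, whose Milnor fibre is homotopy equivalent to $S^{r-1}$; since $r\geq 3$, so $r-1\geq 2$, collapsing vanishing cycles changes the fundamental group of neither a fibre nor $\widehat N$, and the Picard--Lefschetz monodromies act trivially on $\pi_1 H$. A van Kampen argument, decomposing $\CC$ into the complement of $\widehat f(\widehat C)$ together with small discs about its points, then gives $\pi_1\widehat N\cong\pi_1 H$, whence $\pi_1 H\cong\ker\psi$.

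\emph{Finiteness properties.} Now $\pi_1 H\cong\ker\psi$ is a subdirect product of the $r\geq 3$ surface groups $\pi_1 S^{(i)}$ that surjects onto $\ZZ^{2}$; moreover for any $i\neq j$ it surjects onto $\pi_1 S^{(i)}\times\pi_1 S^{(j)}$, because given any target pair one solves the remaining equation in a third factor $l$ using that $\alpha_{l*}$ is onto. By the 1-2-3 theorem of Baumslag--Bridson--Miller--Short and its extension by Bridson--Howie--Miller--Short, such a group is finitely presented, i.e.\ of type $\mathcal{F}_2$. To upgrade this to type $\mathcal{F}_{r-1}$, and to obtain the failure of type $\mathcal{F}_r$, I would use the Bieri--Neumann--Strebel--Renz invariants: since $\pi_1 N=\pi_1 S^{(1)}\times\cdots\times\pi_1 S^{(r)}$ is of type $\mathcal{FP}_\infty$ and $\ZZ^{2}$ is abelian, $\ker\psi$ is of type $\mathcal{FP}_n(\ZZ)$ if and only if the great circle $S(\psi)=\{[\lambda\circ\psi]:0\neq\lambda\in\mathrm{Hom}(\ZZ^{2},\RR)\}$ is contained in $\Sigma^n(\pi_1 N;\ZZ)$. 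Every character in $S(\psi)$ restricts non-trivially to each factor (as $\lambda\circ\alpha_{i*}\neq 0$), and $\Sigma^1$ of a surface group of genus at least two is empty; the product formula of Meinert and of Bieri--Geoghegan for the $\Sigma$-invariants of a direct product then shows that a character non-trivial on all $r$ factors lies in $\Sigma^{r-1}(\pi_1 N;\ZZ)$ but not in $\Sigma^{r}(\pi_1 N;\ZZ)$. Hence $S(\psi)\subseteq\Sigma^{r-1}(\pi_1 N;\ZZ)$ while $S(\psi)\cap\Sigma^{r}(\pi_1 N;\ZZ)=\emptyset$, so $\ker\psi$ is of type $\mathcal{FP}_{r-1}(\ZZ)$ but not of type $\mathcal{FP}_r(\ZZ)$. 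Finite presentability together with type $\mathcal{FP}_{r-1}$ gives type $\mathcal{F}_{r-1}$, and the failure of type $\mathcal{FP}_r$ precludes type $\mathcal{F}_r$.

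\emph{Main obstacle.} The heart of the argument is the finiteness part, and inside it the $\Sigma$-invariant computation: one needs the vanishing of $\Sigma^1$ for higher-genus surface groups, the precise product formula with \emph{integral} coefficients, and the bookkeeping showing that the finiteness length comes out as exactly $r-1$ (the Stallings--Bieri pattern), independently of the rank of the abelian quotient --- the latter being invisible here precisely because every $\alpha_{i*}$ is onto, so the relevant characters have full support. The identification $\pi_1 H\cong\ker\psi$ is the other technical point: although the singularities of $f$ are as mild as possible, one must still control the infinitely many singular fibres appearing in the covering $\widehat N$ and check that the global monodromy contributes nothing to $\pi_1$. An alternative way to package both steps is to run Bestvina--Brady discrete Morse theory on the product $\widetilde{S^{(1)}}\times\cdots\times\widetilde{S^{(r)}}$ of the universal covers, equipped with the $\psi$-equivariant map $\sum_i\widetilde{\alpha_i}\colon\widetilde{S^{(1)}}\times\cdots\times\widetilde{S^{(r)}}\to\RR^{2}$: its ascending and descending links are joins of those of the factors, so their connectivity, and hence the finiteness length of $\ker\psi$, is governed by the number $r$ of factors.
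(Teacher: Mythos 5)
The paper itself contains no proof of this statement: Theorem \ref{thmDPSA} is quoted from Dimca--Papadima--Suciu \cite{DimPapSuc-09-II} (their Theorem A), together with the identification $\pi_1H\cong\ker\phi$ that the rest of the paper relies on. So your text is an independent reconstruction of the cited result. Your first two steps are sound and follow the same lines as the source: the generic fibre is a smooth connected projective variety, so $\pi_1H$ is projective; and the identification $\pi_1 H\cong\ker\psi$ via the pulled-back $\ZZ^2$-cover $\widehat N\to N$, the Morse local model $z_1^2+\cdots+z_r^2$ at the finitely many critical points (exactly the points all of whose coordinates are ramification points), and the observation that attaching the resulting $r$-cells with $r\geq 3$ does not change $\pi_1$, is precisely the Lefschetz-type argument needed there.

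The finiteness step is where your write-up has a genuine (citation-level) gap. The BNSR reduction itself is correct: $G/K\cong\ZZ^2$ is abelian and every character in $S(G,K)$ is nonzero on each factor because each $\alpha_{i*}$ is onto. But the ``product formula'' you invoke cannot be quoted at the strength you use it: the full homotopical product formula for $\Sigma^n$ of a direct product is not known in general, and the integral homological version is likewise not off the shelf, so the assertion that every such character lies in $\Sigma^{r-1}$ but outside $\Sigma^{r}(G;\ZZ)$ is not a one-line citation. What is citable: (i) the easy join inclusion (equivalently your per-character Bestvina--Brady argument, where ascending links are joins of $r$ nonempty links and hence $(r-2)$-connected), which gives $S(G,K)\subseteq\Sigma^{r-1}(G)$ and so type $\mathcal{F}_{r-1}$; and (ii) for the negative direction, either the field-coefficient homological product formula of Bieri--Geoghegan (non-membership in $\Sigma^r(G;\QQ)$, hence failure of $\mathcal{FP}_r(\QQ)$, which already rules out $\mathcal{F}_r$), or the theorem of Bridson--Howie--Miller--Short that a subdirect product of $r$ limit groups meeting every factor nontrivially and of type $\mathcal{FP}_r(\QQ)$ has finite index, or the jump-loci criterion that Dimca--Papadima--Suciu actually use. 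Better still, your own Lefschetz picture already yields both conclusions with no $\Sigma$-machinery: $\widehat N$ is aspherical (a cover of the aspherical $N$) and homotopy equivalent to the compact fibre $H$ with one $r$-cell attached per critical point of $\widehat f$, so $K=\pi_1\widehat N$ has a $K(K,1)$ with finite $(r-1)$-skeleton, i.e.\ is of type $\mathcal{F}_{r-1}$; and since there are infinitely many such $r$-cells while $H_{r-1}(H;\ZZ)$ is finitely generated, the exact sequence of the pair $(\widehat N,H)$ shows $H_r(K;\ZZ)$ is not finitely generated, so $K$ is not of type $\mathcal{FP}_r$ and hence not of type $\mathcal{F}_r$. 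With the finiteness step rerouted in one of these ways, your argument is correct.
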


We will now show how to construct an explicit finite presentation for $\pi_1 H$ for all $r\geq 3$. \textit{To simplify our computations we will only consider the case where $|B^{(i)}|=2$ and thus $g_i=2$ for $i=1,\cdots,r$.} A finite presentation for the general case can be constructed using the very same methods, but the ideas would be obscured by unnecessary complexity.

In our situation the fundamental group of $\pi_1 S^{(i)}$ has a finite presentation
\[
 \pi_1 S^{(i)} = \left\langle \left. a_1^{(i)},a_2^{(i)},b_1^{(i)},b_2^{(i)} ~ \right| \left[a_1^{(i)},a_2^{(i)}\right]\cdot\left[b_1^{(i)},b_2^{(i)}\right] \right\rangle
\]
where $a_j^{(i)},b_j^{(i)}$ are as indicated in Figure \ref{fig:BranchedCover} with an appropriate choice of base point. We see that with respect to the finite presentation
\[
 \pi_1 E = \left\langle \left. \mu_1,\mu_2~ \right\mid \left[\mu_1,\mu_2\right] \right\rangle
\]
the induced map $\alpha_{i*}$ on fundamental groups is given by
\[
 \begin{split}
   \alpha_{i*}: \pi_1 S^{(i)}& \longrightarrow \pi_1 E\\
   a^{(i)}_j,b^{(i)}_j&\longmapsto \mu_j, ~j=1,2.
 \end{split}
\]

In particular it follows that the induced map $f_*$ on fundamental groups is the map
\begin{equation}
 \begin{split}
  \phi=f_*: G=\pi_1S^{(1)}\times \cdots \times \pi_1S^{(r)} &\longrightarrow \pi_1 E\\
  a^{(i)}_j,b^{(i)}_j&\longmapsto \mu_j, ~i=1,\cdots,r,~j=1,2.
 \end{split}
 \label{eqngphom}
\end{equation}
We will prove that the groups corresponding to the finite presentation in Theorem \ref{thmFinPres} are isomorphic to $K= \mathrm{ker}\phi$. Hence they do indeed satisfy all of the stated properties by Theorem \ref{thmDPSA}.

Note also that $G$ has a presentation of the form
\begin{equation}
 G=\left\langle a_i^{(k)},b_i^{(k)},i=1,2,k=1,\cdots,r\mid \left[a_1^{(k)},a_2^{(k)}\right]\cdot\left[b_1^{(k)},b_2^{(k)} \right], \left[*^{(k)},*^{(l)}\right], l\neq k, k,l=1,\cdots,r  \right\rangle;
\label{eqnPresG}
 \end{equation}
 Here $*^{(k)}$ runs over all elements of the form $a_i^{(k)}, b_i^{(k)}$, for $i=1,2$.

\section{Some preliminary results}
\label{secPreLem}

With the examples in hand we do now proceed to prove a few preliminary results which will allow us to focus on the actual construction when proving Theorem \ref{thmFinPres}. Some of the results in this section will be fairly technical.

We will follow the definition of \textit{words} given in \cite[Section 1.3]{BauBriMilSho-00}: A \textit{word} $w(A)$ is a function that assigns to an ordered alphabet $A$ a word in the letters of $A \cup A^{-1}$ where $A^{-1}$ denotes the set of formal inverses of elements of $A$. This allows us to change between alphabets where needed. For example if $A=\left\{a,b\right\}$, $w(A)=aba$ and $A'=\left\{a',b'\right\}$, then $w(A')=a'b'a'$.

For a word $w(A)=a_1\cdots a_N$, with $a_1,\cdots, a_N\in A$, $N\in \NN$, we will denote by $\overline{w}(A)$ the word $a_N\cdots a_1$ and  denote by $w^{-1}(A)$ the word $w^{-1}(A)=a_N^{-1}\cdots a_1^{-1}$.

Following \cite[Section 1.4]{BauBriMilSho-00} we want to derive from the short exact sequence 
\[
 1\rightarrow K\rightarrow G\xrightarrow{\phi} \pi_1E\rightarrow 1
\]
a finite presentation for $G$ of the form 
\[
 \left\langle \mathcal{X}\cup \mathcal{A} \mid S_1 \cup S_2 \cup S_3\right\rangle,
\]
where $\mathcal{X}=\left\{x_1=a_1^{(1)},x_2=a_2^{(2)}\right\}$ is a lift of the generating set $\left\{ \mu_1, \mu_2\right\}$ of $\pi_1E=\left\langle \mu_1,\mu_2\mid \left[\mu_1,\mu_2\right]\right\rangle$ under the homomorphism $\phi:G\rightarrow \pi_1E$ and $\mathcal{A}=\left\{\alpha_1,\cdots, \alpha _n\right\}$ is a finite generating set of $K$. The relations are as follows:
\begin{itemize}
 \item $S_1$ contains a relation $x_i^{\eps}\alpha_j x_i^{-\eps}\omega_{i,j,\eps}(A)$ for every $i=1,2$, $j=1,\cdots,n$ and $\eps =\pm 1$, where $\omega_{i,j,\eps}(\mathcal{A})\in K$ is a word in $\mathcal{A}$ which is equal to $x_i^{\eps}\alpha_j^{-1}x_i^{-\eps}$ in $G$
 \item $S_2$ consists of one single relation of the form $\left[x_1,x_2\right] U(\mathcal{A})$ where $U(\mathcal{A})$ is an element of $K$ which is equal to $\left[x_1,x_2\right]^{-1}$ in $G$
 \item $S_3$ consists of a finite set of words in $\mathcal{A}$
\end{itemize}

We start by deriving a finite generating set for $K=\mathrm{ker}\phi$. One could do this by following the proof of the asymmetric 0-1-2 Lemma (cf. \cite[Lemma 1.3]{Bri-99} and \cite[Lemma 2.1]{BriGru-04}), but this would be no shorter than the more specific derivation followed here, which is more instructive.

\begin{proposition}
 For all $r\geq 2$, the group $K\leq \pi_1 S^{(1)}\times \cdots \times \pi _1 S^{(r)}=G$ is finitely generated with
 \[
  K= \left\langle \mathcal{K} \right\rangle,
 \]
 where $\mathcal{K}=\left\{c_1^{(1)},c_2^{(1)},d,f_i^{(k)},g_i^{(k)}, i=1,2,k=1,\cdots,r\right\}$ with the identifications \\
 $c_i^{(1)}=a_i^{(1)}(b_i^{(1)})^{-1}$, $d=\left[ b_1^{(1)},b_2^{(1)} \right]$, $f_i^{(k)}=a_i^{(1)} (a_i^{(k)})^{-1}$ and $g_i^{(k)}=b_i^{(1)}(b_i^{(k)})^{-1}$, $i=1,2$, $k=1,\cdots,r$.
 \label{propgenset}
\end{proposition}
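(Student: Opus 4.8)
The plan is to exhibit the elements of $\mathcal{K}$ explicitly as elements of $K=\mathrm{ker}\,\phi$, then to show they generate. First I would verify membership: since $\alpha_{i*}$ sends both $a_j^{(i)}$ and $b_j^{(i)}$ to $\mu_j$, the element $c_i^{(1)}=a_i^{(1)}(b_i^{(1)})^{-1}$ maps to $\mu_i\mu_i^{-1}=1$, and $f_i^{(k)}=a_i^{(1)}(a_i^{(k)})^{-1}$, $g_i^{(k)}=b_i^{(1)}(b_i^{(k)})^{-1}$ map to $\mu_i\mu_i^{-1}=1$ as well; finally $d=[b_1^{(1)},b_2^{(1)}]$ maps to $[\mu_1,\mu_2]=1$ in $\pi_1 E$. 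So each generator lies in $K$. (Note $f_i^{(1)}=g_i^{(1)}=1$, so really $\mathcal{K}$ has the claimed size up to these trivial elements.)

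The substance is the reverse inclusion. Here I would use the standard argument for kernels of surjections onto a group with a chosen splitting of generating sets. Fix the lift: $\phi$ restricted to the first factor $\pi_1 S^{(1)}$ is already surjective onto $\pi_1 E$ (sending $a_1^{(1)}\mapsto\mu_1$, $a_2^{(1)}\mapsto\mu_2$), so set $x_i=a_i^{(1)}$. Given an arbitrary $w\in K\le G$, write $w$ in the generators $a_i^{(k)},b_i^{(k)}$. I would rewrite each generator other than $x_1,x_2$ as (a $\mathcal{K}$-element) times (a word in $x_1,x_2$): namely $a_i^{(k)}=(f_i^{(k)})^{-1}a_i^{(1)}=(f_i^{(k)})^{-1}x_i$ for $k\ge 2$, $b_i^{(1)}=(c_i^{(1)})^{-1}a_i^{(1)}=(c_i^{(1)})^{-1}x_i$, and $b_i^{(k)}=(g_i^{(k)})^{-1}b_i^{(1)}=(g_i^{(k)})^{-1}(c_i^{(1)})^{-1}x_i$ for $k\ge2$. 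Substituting these into the word for $w$ and collecting, one obtains $w=\kappa\cdot v(x_1,x_2)$ where $\kappa\in\langle\mathcal{K}\rangle$ — here one must push the $x_i$'s to the right past the $\mathcal{K}$-generators, which only introduces $x_i$-conjugates of $\mathcal{K}$-elements; the point that makes this legitimate is that $\langle\mathcal{K}\rangle$ is normalized by $x_1,x_2$, which I would check as a small lemma (conjugating each generator of $\mathcal{K}$ by $x_i^{\pm1}$ stays in $\langle\mathcal{K}\rangle$). Since $w\in K$ and $\kappa\in K$, we get $v(x_1,x_2)\in K$; but $\phi$ maps $\langle x_1,x_2\rangle$ isomorphically to $\pi_1 E$ — more precisely $\langle x_1,x_2\rangle = \pi_1 S^{(1)}$ and $\ker(\phi|_{\pi_1 S^{(1)}})=\langle c_1^{(1)},c_2^{(1)},d\rangle$ already lies in $\langle\mathcal{K}\rangle$ — so $v(x_1,x_2)\in\langle\mathcal{K}\rangle$ and hence $w\in\langle\mathcal{K}\rangle$.

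The one genuinely nontrivial input I would isolate is precisely the claim $\ker\big(\alpha_{1*}:\pi_1 S^{(1)}\to\pi_1 E\big)=\langle c_1^{(1)},c_2^{(1)},d\rangle$: that the kernel of the induced map from a single genus-$2$ surface group to $\ZZ^2$ sending $a_j\mapsto\mu_j$, $b_j\mapsto\mu_j$ is generated by $a_1b_1^{-1}$, $a_2b_2^{-1}$ and $[b_1,b_2]$. One can see this by a direct Reidemeister–Schreier computation with Schreier transversal $\{\mu_1^m\mu_2^n\}$ (or equivalently $\{a_1^m a_2^n\}$), or more conceptually by observing that $\alpha_{1*}$ factors as $\pi_1 S^{(1)}\to\pi_1 S^{(1)}/\langle\langle a_1b_1^{-1},a_2b_2^{-1}\rangle\rangle\cong\ZZ^2$, identifying the normal closure, and checking that modulo that normal closure the surface relation becomes $[b_1,b_2]^2$ so that $d=[b_1,b_2]$ together with the $c_i^{(1)}$ suffices. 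This step, together with the bookkeeping of the conjugation action of $x_1,x_2$ on $\mathcal{K}$, is where the real work lies; the rest is formal substitution. Finally I would remark that normality of $K$ in $G$ is automatic as $K=\ker\phi$, and that $r\ge 2$ is used only to ensure there is at least one factor beyond the first (for $r=1$ the statement degenerates, $K=\ker\alpha_{1*}$, still covered by the single-surface computation).
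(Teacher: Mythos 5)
Your membership check and your main rewriting strategy (substitute $b_i^{(1)}=(c_i^{(1)})^{-1}x_i$, $a_i^{(k)}=(f_i^{(k)})^{-1}x_i$, $b_i^{(k)}=(g_i^{(k)})^{-1}(c_i^{(1)})^{-1}x_i$, then push the $x_i$'s to one side using that $x_1,x_2$ normalize $\langle\mathcal{K}\rangle$) are sound and close in spirit to the paper's proof. But the step you single out as the key input is false as stated. First, $\langle x_1,x_2\rangle=\langle a_1^{(1)},a_2^{(1)}\rangle$ is a proper (free) subgroup of $\pi_1S^{(1)}$, not all of it, and $\phi$ does not map it isomorphically onto $\pi_1E$ (e.g.\ $[x_1,x_2]=d^{-1}\neq 1$ is killed). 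More seriously, the claim $\ker\bigl(\alpha_{1*}:\pi_1S^{(1)}\to\pi_1E\bigr)=\langle c_1^{(1)},c_2^{(1)},d\rangle$ cannot hold: this kernel is a nontrivial normal subgroup of infinite index in a closed surface group, so by Greenberg's theorem it is not finitely generated at all. Your proposed justifications do not (and cannot) deliver it: Reidemeister--Schreier over the transversal $\{a_1^ma_2^n\}$ yields infinitely many generators, and the ``conceptual'' argument only identifies the kernel with the \emph{normal closure} of $c_1^{(1)},c_2^{(1)},d$ (note also that $\pi_1S^{(1)}/\langle\langle a_1b_1^{-1},a_2b_2^{-1}\rangle\rangle\cong\langle b_1,b_2\mid [b_1,b_2]^2\rangle$, which is not $\mathbb{Z}^2$); normal generation is strictly weaker than the subgroup generation your final step uses.

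The gap is repairable, and the repair is essentially what the paper does. After your substitution the leftover word $v$ lies in $\langle x_1,x_2\rangle$ only, and $v\in K$ forces the exponent sums of $x_1$ and of $x_2$ in $v$ to vanish; hence, in the free group on $x_1,x_2$, $v$ is a product of conjugates of $[x_1,x_2]^{\pm1}$ by words in $x_1,x_2$. Since $[x_1,x_2]=d^{-1}\in\langle\mathcal{K}\rangle$ and conjugation by $x_i^{\pm1}$ preserves $\langle\mathcal{K}\rangle$ (your ``small lemma'' --- which is true but is exactly where the identities of Lemma \ref{lemVW} enter, e.g.\ $x_i^{\eps}c_j^{(1)}x_i^{-\eps}=(f_i^{(k)})^{\eps}c_j^{(1)}(f_i^{(k)})^{-\eps}$ for some $k\geq2$, so $r\geq2$ is needed), it follows that $v\in\langle\mathcal{K}\rangle$, and the argument closes without any finite-generation claim about $\ker\alpha_{1*}$. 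This corrected route runs parallel to the paper's: there one reduces to a word in $X^{(1)}$ and uses the exponent-sum criterion of Lemma \ref{lemgenset}(4) together with the conjugation and commutator identities, rather than a description of $\ker\alpha_{1*}$ as a finitely generated subgroup.
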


 To ease notation we introduce the ordered sets
 \[
  X^{(k)}=\left\{a_1^{(k)},a_2^{(k)},b_1^{(k)},b_2^{(k)}\right\},~k=2,\cdots,r,
 \]
 \[
  Y^{(k)}=\left\{f_1^{(k)},f_2^{(k)},g_1^{(k)},g_2^{(k)}\right\}, ~k=2,\cdots, r
 \]
 
 The proof of Proposition \ref{propgenset} will make use of
\begin{lemma}
 Let $\mathcal{K}=\left\{c_1^{(1)},c_2^{(1)},d,f_i^{(k)},g_i^{(k)}, i=1,2,k=1,\cdots,r\right\}$ be as defined in Proposition \ref{propgenset}. Let $w(X^{(1)}\cup \cdots \cup X^{(m)})$ be a word in $X^{(1)}\cup \cdots \cup X^{(m)}$ with $m\in \left\{1,\cdots, r-1\right\}$ and let $v(X^{(1)})$ be a word in $X^{(1)}$. Then the following hold:
  \begin{enumerate}
  \item In $G$ we have the identity
  \[
   v(X^{(1)})\cdot w(X^{(1)}\cup\cdots \cup X^{(m)})\cdot v^{-1}(X^{(1)}) = v(Y^{(k)}) w(X^{(1)}\cup \cdots \cup X^{(m)}) v^{-1}(Y^{(k)}).
  \]
  In particular if $w(X^{(1)}\cup\cdots \cup X^{(m)})\in \left\langle \mathcal{K}\right\rangle$, then so are all its $\left\langle X^{(1)}\right\rangle$-conjugates.
  \item If $m=1$, we can cyclically permute the letters of $w(X^{(1)})$ using conjugation by elements in $\left\langle \mathcal{K} \right\rangle$.
  \item If $m=1$, all commutators of letters in $X^{(1)}$ are contained in $\left\langle \mathcal{K} \right\rangle$. 
  \item If $m=1$, we have $\phi(w(X^{(1)}))=1$ if and only if the combined sum of the exponents of $a_i^{(1)}$ and $b_i^{(1)}$ is zero for both, $i=1$ and $i=2$.
 \end{enumerate}
 \label{lemgenset}
\end{lemma}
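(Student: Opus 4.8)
The plan is to work throughout inside the direct product $G=\pi_1 S^{(1)}\times\cdots\times\pi_1 S^{(r)}$ and to use the coordinate projections $\pi_j\colon G\to\pi_1 S^{(j)}$: two elements of $G$ are equal iff they agree under every $\pi_j$, and elements supported on disjoint sets of coordinates commute. The only computation needed is the behaviour of the relevant generators under the $\pi_j$. A word $v(X^{(1)})$ has $\pi_1$-image equal to $v(X^{(1)})$ itself and $\pi_j$-image $1$ for $j\ge 2$; and $f_i^{(k)}$ (respectively $g_i^{(k)}$) has $\pi_1$-image $a_i^{(1)}$ (resp. $b_i^{(1)}$), $\pi_k$-image $(a_i^{(k)})^{-1}$ (resp. $(b_i^{(k)})^{-1}$), and trivial image in all other coordinates. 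In particular $\pi_1(v(Y^{(k)}))=v(X^{(1)})$, while $\pi_j(v(Y^{(k)}))=1$ for $j\neq 1,k$.

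For (1), fix $k$ with $m<k\le r$ (possible since $m\le r-1$); then $w=w(X^{(1)}\cup\cdots\cup X^{(m)})$ has $\pi_j(w)=1$ for all $j>m$, in particular $\pi_k(w)=1$. Comparing the two conjugates coordinate by coordinate: under $\pi_1$ both equal $v(X^{(1)})\,\pi_1(w)\,v(X^{(1)})^{-1}$; under $\pi_j$ for $2\le j\le m$ both equal $\pi_j(w)$ (as $v(Y^{(k)})$ is trivial in those coordinates); under $\pi_k$ both equal $1$ (on the right since $\pi_k(v(Y^{(k)}))\cdot 1\cdot\pi_k(v(Y^{(k)}))^{-1}=1$); and under the remaining $\pi_j$ both are trivial. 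Hence the two conjugates coincide in $G$. The ``in particular'' clause is then immediate: $Y^{(k)}\subseteq\mathcal{K}$, so $v(Y^{(k)})\in\langle\mathcal{K}\rangle$, and a general $\langle X^{(1)}\rangle$-conjugate of $w\in\langle\mathcal{K}\rangle$ is rewritten by the identity just proved as $v(Y^{(k)})\,w\,v(Y^{(k)})^{-1}\in\langle\mathcal{K}\rangle$.

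Part (4) is immediate: by \eqref{eqngphom} one has $\phi(a_i^{(1)})=\phi(b_i^{(1)})=\mu_i$, and $\pi_1E\cong\ZZ^2$ is abelian, so $\phi(w(X^{(1)}))$ is the element of $\ZZ^2$ whose $i$-th coordinate is the combined exponent sum of $a_i^{(1)}$ and $b_i^{(1)}$ in $w$; it is trivial exactly when both those sums vanish. For (2), cyclically permuting $w(X^{(1)})=y_1\cdots y_N$ so that the prefix $y_1\cdots y_t$ is carried to the end produces $p\,w\,p^{-1}$ with $p=(y_1\cdots y_t)^{-1}$ a word in $X^{(1)}$; applying (1) with $m=1$ and, say, $k=2$ (available since $r\ge 2$) rewrites this as $p(Y^{(2)})\,w\,p(Y^{(2)})^{-1}$, a conjugation by the element $p(Y^{(2)})\in\langle Y^{(2)}\rangle\subseteq\langle\mathcal{K}\rangle$.

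Part (3) is the one step requiring an idea. First $[b_1^{(1)},b_2^{(1)}]=d$ by definition and $[a_1^{(1)},a_2^{(1)}]=d^{-1}$ by the surface relation in $\pi_1 S^{(1)}$. For a commutator of two letters in general, substitute $b_i^{(1)}=(c_i^{(1)})^{-1}a_i^{(1)}$ and expand using $[x,yz]=[x,y]\cdot y[x,z]y^{-1}$; replacing any surviving $[a_i^{(1)},a_j^{(1)}]$ by $d^{\pm1}$ then exhibits the commutator as a product of copies of $c_i^{(1)}$, of $d^{\pm1}$, and of $\langle X^{(1)}\rangle$-conjugates of $(c_i^{(1)})^{\pm1}$ and of $d^{\pm1}$. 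Since $c_i^{(1)}$ and $d$ are words in $X^{(1)}$ that lie in $\mathcal{K}$, the ``in particular'' clause of (1) places every such conjugate in $\langle\mathcal{K}\rangle$, whence the commutator does too. Commutators involving inverse letters reduce to this case via identities such as $[x^{-1},y]=x^{-1}[x,y]^{-1}x$ together with one further application of (1). The main (though still routine) obstacle here is the bookkeeping of exactly which conjugates occur in this reduction; conceptually everything rests on the coordinate computation underlying (1).
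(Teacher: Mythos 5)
Your proof is correct, and its overall architecture matches the paper's: part (1) is the workhorse, (2) follows from (1) by writing a cyclic permutation as a conjugation, (3) reduces to (1) plus the relations $[a_1^{(1)},a_2^{(1)}]=d^{-1}$, $[b_1^{(1)},b_2^{(1)}]=d$, and (4) is read off from \eqref{eqngphom}. The differences are in execution. For (1) the paper stays inside $G$ and rewrites letter by letter, inserting $(a_i^{(k)})^{\mp\eps}(a_i^{(k)})^{\pm\eps}$ and commuting the inserted letters past $w$ using $[X^{(k)},X^{(l)}]=1$ for $k>m$, whereas you verify the identity by comparing images under all coordinate projections $\pi_j$; these are two phrasings of the same direct-product fact, with your version arguably cleaner since it dispenses with the explicit shuffling. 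For (3) the paper writes down closed-form identities for the four basic commutators, e.g. $[a_i^{(1)},b_i^{(1)}]=[f_i^{(k)},(c_i^{(1)})^{-1}]$ and an explicit word in $\mathcal{K}$ for $[a_i^{(1)},b_j^{(1)}]$, $i\neq j$, and then invokes conjugation, cyclic permutation and inversion; you instead substitute $b_i^{(1)}=(c_i^{(1)})^{-1}a_i^{(1)}$, expand with standard commutator identities, and use the ``in particular'' clause of (1) to certify membership of the resulting $\langle X^{(1)}\rangle$-conjugates of $c_i^{(1)}$ and $d$ in $\langle\mathcal{K}\rangle$. That establishes exactly what the lemma asserts (membership in $\langle\mathcal{K}\rangle$) without producing explicit normal forms; the paper's explicit formulas are of the same flavour as the words $V_{i,j,\eps}$, $W_{i,j,\eps}$ it needs later in Lemma \ref{lemVW}, so its extra computation is not wasted, but for this lemma your structural shortcut is perfectly adequate, and the only loose end you leave -- the bookkeeping of which conjugates appear, and the reduction of inverse letters via $[x^{-1},y]=x^{-1}[x,y]^{-1}x$ together with one more application of (1) -- is routine and closes without difficulty.
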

\begin{proof}
 We obtain (1) using that $\left[X^{(k)},X^{(l)}\right]=\left\{1\right\}$ for $1\leq k\neq l\leq r$:
 {\small \[
  \begin{split}
  (a_i^{(1)})^{\eps}\cdot w(X^{(1)}\cup \cdots \cup X^{(m)})\cdot(a_i^{(1)})^{-\eps} &= ((a_i^{(1)})^{\eps} (a_i^{(k)})^{-\eps})\cdot w(X^{(1)}\cup \cdots \cup X^{(m)})\cdot ((a_i^{(k)})^{\eps}(a_i^{(1)})^{-\eps})\\
  &=(f_i^{(k)})^{\eps} w(X^{(1)}\cup \cdots \cup X^{(m)})(f_i^{(k)})^{-\eps},
  \end{split}
 \]
 \[
 \begin{split}
  (b_i^{(1)})^{\eps}\cdot w(X^{(1)}\cup \cdots \cup X^{(m)})\cdot(b_i^{(1)})^{-\eps} &= ((b_i^{(1)})^{\eps} (b_i^{(k)})^{-\eps})\cdot w(X^{(1)}\cup \cdots \cup X^{(m)})\cdot ((b_i^{(k)})^{\eps}(b_i^{(1)})^{-\eps})\\
  &=(g_i^{(k)})^{\eps} w(X^{(1)}\cup \cdots \cup X^{(m)})(g_i^{(k)})^{-\eps},\\
 \end{split}
 \]}
 for all $i=1,2$, $\eps = \pm 1$, $k > m$.
 
 We obtain (2) from (1). For instance $a_1^{(1)}w'(X^{(1)})= f_1^{(k)} w'(X^{(1)}) a_1^{(1)} (f_1^{(k)})^{-1}$.
 
 We obtain (3) from (1), (2) and the following identities in $G$
 \begin{itemize}
  \item $\left[b_1^{(1)},b_1^{(2)}\right]=d$,
  \item $\left[a_1^{(1)},a_2^{(1)}\right]=\left[b_1^{(1)},b_2^{(1)}\right]^{-1}$ in $\pi_1 S^{(1)}$ and thus in $G$,
  \item {\small $\left[a_i^{(1)},b_i^{(1)}\right]=\left[f_i^{(k)},(c_i^{(1)})^{-1}\right]$}, for $i=1,2$ and $k=2,\cdots, r$,
  \item $\left[a_i^{(1)},b_j^{(1)}\right]= c_i^{(1)}\cdot  d^{2j-3}\cdot g_j^{(k)} \cdot (c_1^{(1)})^{-1} \cdot (g_2^{(k)})^{-1}$ for and $i,j=1,2$, $k=2,\cdots,r$ and $i\neq j$.
 \end{itemize}
  With these commutators at hand we can use conjugation in $\left\langle \mathcal{K} \right\rangle$, cyclic permutation and inversion in order to obtain all other commutators.
  
  We obtain (4) as an immediate consequence of the definition of the map $\phi$ given in \eqref{eqngphom}.
\end{proof}

\begin{proof}[Proof of Proposition \ref{propgenset}]
 It is immediate from the explicit form \eqref{eqngphom} of the map $\phi$ that all elements in $\mathcal{K}$ are indeed contained in $K$. Hence, we only need to prove that these elements actually generate $K$.
 
 Let $g\in K$ be an arbitrary element. Since $\left[X^{(k)},X^{(l)}\right]=\left\{1\right\}$ in G, there are words $w_1(X^{(1)})$, $\cdots$, $w_r(X^{(r)})$ such that
 \[
  g= w_1(X^{(1)})\cdot~ \cdots ~ \cdot w_r(X^{(r)}).
 \]
 
 Using $\left[X^{(1)},X^{(l)}\right]=\left\{1\right\}$ for $l\neq 1$ we obtain that $w_k(X^{(k)})= \overline{w_k}(X^{(1)})\overline{w}_k^{-1}(Y^{(k)})$, $k=2,\cdots, r$ and consequently 
 \[
  \begin{split}
   g&=w_1(X^{(1)})\cdot~ \cdots ~ \cdot w_r(X^{(r)})\\
   &=w_1(X^{(1)})\cdot w_2(X^{(2)})\cdot ~ \cdots ~ \cdot w_{r-1}(X^{(r-1)})\cdot \overline{w_r}(X^{(1)})\cdot \overline{w_r}^{-1}(Y^{(r)})\\
   &= w_1(X^{(1)})\cdot \overline{w_r}(X^{(1)})\cdot w_2(X^{(2)})\cdot ~ \cdots ~ \cdot w_{r-1}(X^{(r-1)})\cdot  \overline{w_r}^{-1}(Y^{(r)})\\
   &= \cdots\\
   &= w_1(X^{(1)})\cdot \overline{w_r}(X^{(1)}) \overline{w_{r-1}}(X^{(1)})\cdot ~ \cdots ~ \cdot  \overline{w_2}(X^{(1)})\cdot  \overline{w_2}^{-1}(Y^{(2)})\cdot ~ \cdots ~ \cdot \overline{w}_{r-1}^{-1}(Y^{(r-1)})\cdot  \overline{w_r}^{-1}(Y^{(r)})
  \end{split}
 \]
 
 Since $g\in K$ and $\overline{w_2}^{-1}(Y^{(2)})\cdot ~ \cdots ~ \cdot  \overline{w_r}^{-1}(Y^{(r)})\in\left\langle \mathcal{K} \right\rangle \leq K$, it suffices to prove that every element of $K$ which is equal in $G$ to a word $w(X^{(1)})$ is in $\left\langle \mathcal{K} \right\rangle $.
 
 Due to Lemma \ref{lemgenset}(2),(3) we can use conjugation and commutators to obtain an equality
 \[
  w(X^{(1)})= u(\mathcal{K})\cdot (a_1^{(1)}) ^{m_1} \cdot (b_1^{(1)})^{n_1} \cdot (a_2^{(1)}) ^{m_2} \cdot (b_2^{(1)})^{n_2}\cdot v(\mathcal{K})
 \]
 in $K$ for some $m_1,m_2,n_1,n_2\in \ZZ$ and words $u(\mathcal{K}),v(\mathcal{K})\in \left\langle\mathcal{K}\right\rangle$. 
 
 By Lemma \ref{lemgenset}(4) we obtain that $n_1=-m_1$ and $n_2=-m_2$. Hence, another application of Lemma \ref{lemgenset}(2),(3) implies that
 \[
  w(X^{(1)})= u'(\mathcal{K})\cdot (c_1^{(1)})^{m_1} \cdot (c_2^{(1)}) ^{m_2}\cdot v'(\mathcal{K})\in \left\langle\mathcal{K}\right\rangle
 \]
 in $K$ for some words $u'(\mathcal{K}),v'(\mathcal{K})\in \left\langle\mathcal{K}\right\rangle$. This completes the proof. 
\end{proof}
 
We will use $\mathcal{K}$ as our generating set $\mathcal{A}$ and compute the elements of $S_1$ with respect to $\mathcal{K}$. 
 
 \begin{lemma}
  The following identities hold in $G$ for all $i,j=1,2$, $k=2,\cdots,r$ and $\eps=\pm 1$:

  {\fontsize{0.365cm}{1em}\selectfont
  \begin{flalign}
   \left[(a_i^{(1)})^{\eps},f_j^{(k)}\right]&=\left[(a_i^{(1)})^{\eps},a_j^{(1)}\right]=
   \left\{
   \begin{array}{ll}
    1 & \mbox{, if }i=j\\
    d^{2i-3}& \mbox{, if }i\neq j\mbox{ and } \eps=1\\
    (f_i^{(k)})^{-1}d^{3-2i}f_i^{(k)}& \mbox{, if } i\neq j\mbox{ and } \eps =-1.
   \end{array}
   \right.&&
   \label{eqncomm1}
  \end{flalign}
  \begin{flalign}
   \left[(a_i^{(1)})^{\eps},g_j^{(k)}\right]=\left[(a_i^{(1)})^{\eps},b_j^{(1)}\right]&=
   \left\{
   \begin{array}{ll}
    \left[(f_i^{(k)})^{\eps},(c_i^{(1)})^{-1}\right] & \mbox{, if }i=j\\
    f_i^{(k)} (c_j^{(1)})^{-1} (f_i^{(k)})^{-1}d^{2i-3}c_j^{(1)}& \mbox{, if }i\neq j\mbox{ and } \eps=1\\
    (f_i^{(k)})^{-1} (c_j^{(1)})^{-1}d^{3-2i} f_i^{(k)}c_j^{(1)}& \mbox{, if } i\neq j\mbox{ and } \eps =-1.
   \end{array}
   \right.&&
   \label{eqncomm2}
  \end{flalign}
  \begin{flalign}
  \label{eqnnorm1}
   (a_i^{(1)})^{\eps}c_j^{(1)}(a_i^{(1)})^{-\eps}&=(f_i^{(k)})^{\eps}c_j^{(1)}(f_i^{(k)})^{-\eps} &&
  \end{flalign}
  \begin{flalign}
  \label{eqnnorm2}
   (a_i^{(1)})^{\eps}d(a_i^{(1)})^{-\eps} &=(f_i^{(k)})^{\eps}d(f_i^{(k)})^{-\eps} &&
  \end{flalign}
}
\label{lemVW}
 \end{lemma}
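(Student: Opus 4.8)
The plan is to reduce each of the asserted identities to an elementary computation inside the single surface group $\pi_1S^{(1)}$, relying on two features of the ambient direct product $G$. First, for $k\geq 2$ the generators $a_j^{(k)},b_j^{(k)}$ of $\pi_1S^{(k)}$ commute with every element of $\pi_1S^{(1)}$; since $f_j^{(k)}=a_j^{(1)}(a_j^{(k)})^{-1}$ and $g_j^{(k)}=b_j^{(1)}(b_j^{(k)})^{-1}$ differ from $a_j^{(1)}$, respectively $b_j^{(1)}$, only by such a central factor, and a commutator (or a conjugate) is unchanged when an entry is multiplied by a central element, one obtains immediately $[(a_i^{(1)})^{\eps},f_j^{(k)}]=[(a_i^{(1)})^{\eps},a_j^{(1)}]$ and $[(a_i^{(1)})^{\eps},g_j^{(k)}]=[(a_i^{(1)})^{\eps},b_j^{(1)}]$ --- these are the first equalities in \eqref{eqncomm1} and \eqref{eqncomm2} --- as well as the case $i=j$ of \eqref{eqncomm1}. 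Second, writing $a_i^{(1)}=f_i^{(k)}a_i^{(k)}$ with $a_i^{(k)}$ central shows that conjugating any word in $\mathcal{K}$ by $(a_i^{(1)})^{\eps}$ gives the same result as conjugating it by $(f_i^{(k)})^{\eps}$; applied to $c_j^{(1)}$ and to $d$ this yields \eqref{eqnnorm1} and \eqref{eqnnorm2} at once.

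It therefore remains to evaluate $[(a_i^{(1)})^{\eps},a_j^{(1)}]$ and $[(a_i^{(1)})^{\eps},b_j^{(1)}]$ with $i\neq j$, and $[(a_i^{(1)})^{\eps},b_i^{(1)}]$, as elements of $\langle\mathcal{K}\rangle$, and this I would do directly in $\pi_1S^{(1)}$. The defining relation gives $[a_1^{(1)},a_2^{(1)}]=[b_1^{(1)},b_2^{(1)}]^{-1}=d^{-1}$, which is equivalent to a conjugation rule writing $a_j^{(1)}(a_i^{(1)})^{\pm1}(a_j^{(1)})^{-1}$ as a power of $d$ times $(a_i^{(1)})^{\pm1}$; substituting $b_j^{(1)}=(c_j^{(1)})^{-1}a_j^{(1)}$ (the definition $c_j^{(1)}=a_j^{(1)}(b_j^{(1)})^{-1}$ rearranged) and applying the standard commutator identities expressing $[x^{-1},y]$ and $[x,y^{-1}]$ as conjugates of $[x,y]^{\pm1}$, a few lines of rewriting bring each commutator into the stated form, once the outermost $a_i^{(1)}$ is rewritten as $f_i^{(k)}a_i^{(k)}$ and the central factor is absorbed. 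For $i=j$ this directly produces $[(a_i^{(1)})^{\eps},b_i^{(1)}]=[(f_i^{(k)})^{\eps},(c_i^{(1)})^{-1}]$, which is already recorded in the proof of Lemma \ref{lemgenset}(3); for $i\neq j$ the resulting expressions coincide in $\langle\mathcal{K}\rangle$ with those listed there, so one may alternatively quote that computation.

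I expect the only genuine difficulty to be bookkeeping rather than mathematics: one must keep track of the exponent of $d$, which switches between $2i-3$ and $3-2i$ depending on the sign $\eps$ and on whether $i<j$ or $i>j$, and one must make sure that the conjugating letter which survives is $f_i^{(k)}$, on the correct side, and not its inverse. (Note that in \eqref{eqncomm1} with $\eps=1$ and $i\neq j$ no $f_i^{(k)}$ appears at all, since $[a_i^{(1)},a_j^{(1)}]=d^{2i-3}$ involves no conjugation.) Once the commutator and surface-relation sign conventions are pinned down these become routine verifications, and the whole lemma is a formal consequence of the product structure of $G$ together with the single relation defining $\pi_1S^{(1)}$.
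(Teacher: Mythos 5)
Your argument is correct and is essentially the paper's proof: both reduce the identities to $\pi_1S^{(1)}$ via $\bigl[X^{(k)},X^{(l)}\bigr]=\{1\}$ (which gives the left-hand equalities and \eqref{eqnnorm1}, \eqref{eqnnorm2} at once) and then evaluate the remaining commutators using the surface relation $\bigl[a_1^{(1)},a_2^{(1)}\bigr]=d^{-1}$ and the substitutions $b_j^{(1)}=(c_j^{(1)})^{-1}a_j^{(1)}$, $a_i^{(1)}=f_i^{(k)}a_i^{(k)}$. The only wording to adjust is ``central'': $a_j^{(k)}$ is not central in $G$, but, as you correctly state first, it commutes with every element of $\pi_1S^{(1)}$, which is all the argument needs.
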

 
 \begin{proof}
  The vanishing $\left[X^{(l)},X^{(k)}\right]=\left\{1\right\}$ for $k\neq l$ yields the equalities
  \[
   \left[(a_i^{(1)})^{\eps},f_j^{(k)}\right]=\left[(a_i^{(1)})^{\eps},a_j^{(1)}\right],
  \]
  \[
   \left[(a_i^{(1)})^{\eps},g_j^{(k)}\right]=\left[(a_i^{(1)})^{\eps},b_j^{(1)}\right],
  \]
  as well as the equalities \eqref{eqnnorm1} and \eqref{eqnnorm2}. In particular the commutators on the left of \eqref{eqncomm1} and \eqref{eqncomm2} are independent of $k$.
  
  The equalities on the right of \eqref{eqncomm1} and \eqref{eqncomm2} are established similarly.

 \end{proof}
 
 In the following we will denote by $V_{i,j,\eps}(\mathcal{K})$ the words in the alphabet $\mathcal{K}$ as defined on the right side of equation \eqref{eqncomm1} and by $W_{i,j,\eps}(\mathcal{K})$ the words in the alphabet $\mathcal{K}$ as defined on the right side of equation \eqref{eqncomm2}, in both cases choosing $k=2$. 
 
 With this notation we obtain
 \[
  S_1=\left\{
    \left[x_i^{\eps},f_j^{(k)}\right]V_{i,j,\eps}^{-1},~ \left[x_i^{\eps},g_j^{(k)}\right]W_{i,j,\eps}^{-1},~ x_i^{\eps}c_jx_i^{-\eps}(f_i^{(k)})^{\eps}c_j^{-1}(f_i^{(k)})^{-\eps},~ x_i^{\eps}dx_i^{-\eps}(f_i^{(k)})^{\eps}d^{-1}(f_i^{(k)})^{-\eps}
  \right\}.
 \]
 In fact we do not actually need all of the relations $S_1$, but we are able to express some of them in terms of the other ones
 \begin{lemma}
  There is a canonical isomorphism
  \[
   \left\langle \mathcal{X},\mathcal{K}\mid S_1'\right\rangle \cong \left\langle \mathcal{X},\mathcal{K}\mid S_1\right\rangle,
  \]
  induced by the identity map on generators, with
  \[
   S_1'=\left\{
    \left[x_i,f_j^{(k)}\right]V_{i,j,1}^{-1},~ \left[x_i,g_j^{(k)}\right]W_{i,j,1}^{-1},~ x_i^{\eps}c_jx_i^{-\eps}(f_i^{(k)})^{\eps}c_j^{-1}(f_i^{(k)})^{-\eps},~ x_i^{\eps}dx_i^{-\eps}(f_i^{(k)})^{\eps}d^{-1}(f_i^{(k)})^{-\eps}
   \right\}.
  \]
  \label{lemNoEps}
 \end{lemma}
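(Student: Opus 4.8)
Write $\Gamma'=\langle\mathcal{X},\mathcal{K}\mid S_1'\rangle$ and $\Gamma=\langle\mathcal{X},\mathcal{K}\mid S_1\rangle$. The key observation is that $S_1'\subseteq S_1$: the two relating sets differ only in that $S_1$ contains $[x_i^{\eps},f_j^{(k)}]V_{i,j,\eps}^{-1}$ and $[x_i^{\eps},g_j^{(k)}]W_{i,j,\eps}^{-1}$ for \emph{both} signs $\eps=\pm1$, while $S_1'$ keeps only the $\eps=1$ versions; the normalisation relators $x_i^{\eps}c_jx_i^{-\eps}(f_i^{(k)})^{\eps}c_j^{-1}(f_i^{(k)})^{-\eps}$ and $x_i^{\eps}dx_i^{-\eps}(f_i^{(k)})^{\eps}d^{-1}(f_i^{(k)})^{-\eps}$ occur, with both signs, in each of $S_1$ and $S_1'$. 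Hence the identity map on $\mathcal{X}\cup\mathcal{K}$ induces a surjection $\Gamma'\to\Gamma$, and it suffices to prove that the missing relators $[x_i^{-1},f_j^{(k)}]V_{i,j,-1}^{-1}$ and $[x_i^{-1},g_j^{(k)}]W_{i,j,-1}^{-1}$ already hold in $\Gamma'$; then the two normal closures in $F(\mathcal{X}\cup\mathcal{K})$ coincide and the induced map is an isomorphism.

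I would carry this out with the elementary identity $[x^{-1},w]=x^{-1}[x,w]^{-1}x$, valid for all $x,w$ in a group (with the paper's commutator convention $[a,b]=aba^{-1}b^{-1}$; with the opposite convention it reads $[x^{-1},w]=x[x,w]^{-1}x^{-1}$ and the argument is unchanged). First, taking $i=j$ in the relator $[x_i,f_i^{(k)}]V_{i,i,1}^{-1}\in S_1'$ and using that $V_{i,i,1}$ is the empty word by \eqref{eqncomm1}, one sees that $x_i$, and hence $x_i^{-1}$, commutes with $f_i^{(k)}$ in $\Gamma'$; in particular $[x_i^{-1},f_i^{(k)}]=1=V_{i,i,-1}$. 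For $i\neq j$ the identity together with $[x_i,f_j^{(k)}]=V_{i,j,1}=d^{2i-3}$ (from $S_1'$ and \eqref{eqncomm1}) gives $[x_i^{-1},f_j^{(k)}]=x_i^{-1}d^{3-2i}x_i$ in $\Gamma'$, and the $d$-normalisation relator of $S_1'$ with $\eps=-1$ and $k=2$ rewrites the right-hand side as $(f_i^{(2)})^{-1}d^{3-2i}f_i^{(2)}=V_{i,j,-1}$ (recall $V_{i,j,\eps}$ is by definition the $k=2$ word).

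For the $g$-relators the same identity yields $[x_i^{-1},g_j^{(k)}]=x_i^{-1}W_{i,j,1}^{-1}x_i$ in $\Gamma'$. By \eqref{eqncomm2}, $W_{i,j,1}$ is a short word whose letters lie among $f_i^{(2)}$, $c_j$ and $d$. I would conjugate $W_{i,j,1}^{-1}$ by $x_i^{-1}$ one letter at a time, using $x_i^{-1}f_i^{(2)}x_i=f_i^{(2)}$ (from the previous step), $x_i^{-1}c_jx_i=(f_i^{(2)})^{-1}c_jf_i^{(2)}$ and $x_i^{-1}dx_i=(f_i^{(2)})^{-1}df_i^{(2)}$ (the $\eps=-1$, $k=2$ normalisation relators of $S_1'$); after the evident cancellations the result is exactly $W_{i,j,-1}$ as given by \eqref{eqncomm2}, both when $i=j$ and when $i\neq j$. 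This disposes of every relator in $S_1\setminus S_1'$, whence $\Gamma'\cong\Gamma$ via the identity on generators.

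The only genuine computation is the syllable-by-syllable conjugation of $W_{i,j,1}^{-1}$ in the third step; it is short but one must keep careful track of the exponents $d^{\pm(2i-3)}$ and of the fact that the copy of $f_i^{(\cdot)}$ occurring in $W_{i,j,\eps}$ is fixed to $f_i^{(2)}$, which is precisely the one produced by the $k=2$ normalisation relators. I do not anticipate any serious obstacle beyond this bookkeeping.
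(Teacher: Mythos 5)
Your proposal is correct and takes essentially the same approach as the paper: since $S_1'\subseteq S_1$, you show each relator of $S_1\setminus S_1'$ is a consequence of $S_1'$ via the identity $[x_i^{-1},w]=x_i^{-1}[x_i,w]^{-1}x_i$ together with the $\eps=-1$ normalisation relators for $c_j$ and $d$, which is exactly the paper's computation. Your syllable-by-syllable conjugation of $W_{i,j,1}^{-1}$ (and the $i=j$ versus $i\neq j$ case split) reproduces the paper's verification that the result is $W_{i,j,-1}$, so no gap remains.
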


 \begin{proof}
  Since the set of relations $S_1'$ is a proper subset of $S_1$ it suffices to prove that all elements in $S_1 \setminus S_1'$ are products of conjugates of relations in $S_1'$. Indeed, we have the following equalities in $\left\langle \mathcal{X},\mathcal{K}\mid S_1'\right\rangle$ using relations of the form \eqref{eqnnorm1} and \eqref{eqnnorm2} and the relations for $\eps=1$:
  \[
   \begin{split}
   \left[x_i^{-1},f_j^{(k)}\right]V_{i,j,-1}^{-1}&= x_i^{-1}f_j^{(k)}x_i(f_j^{(k)})^{-1} V_{i,j,-1}^{-1}\\
   &=x_i^{-1}\left[x_i,f_j^{(k)}\right]^{-1} x_i V_{i,j,-1}^{-1}\\
   &=\left\{\begin{array}{ll}x_i^{-1} x_i&, \mbox{ if } i=j\\ x_i^{-1} d^{3-2i} x_i (f_i^{(k)})^{-1} d^{2i-3} f_i^{(k)} &, \mbox{ if } i\neq j   \end{array} \right.\\
   &=1
   \end{split}
  \]
  and
  \[
   \begin{split}
   \left[x_i^{-1},g_j^{(k)}\right]W_{i,j,-1}^{-1}&= x_i^{-1}g_j^{(k)}x_i(g_j^{(k)})^{-1} W_{i,j,-1}^{-1}\\
   &=x_i^{-1}\left[x_i,g_j^{(k)}\right]^{-1} x_i W_{i,j,-1}^{-1}\\
   &=\left\{\begin{array}{ll}x_i^{-1} \left[(c_i^{(1)})^{-1},f_i^{(k)}\right] x_i \left[(c_i^{(1)})^{-1},(f_i^{(k)})^{-1}\right]&, \mbox{ if } i=j\\ 
   x_i^{-1} (c_j^{(1)})^{-1}d^{3-2i}f_i^{(k)}c_j^{(1)}(f_i^{(k)})^{-1} x_i (c_j^{(1)})^{-1} (f_i^{(k)})^{-1} d^{2i-3} c_j^{(1)} f_i^{(k)} &, \mbox{ if } i\neq j   \end{array} \right.\\
   &= \left\{\begin{array}{ll}x_i^{-1} \left[(c_i^{(1)})^{-1},f_i^{(k)}\right]  \left[x_i (c_i^{(1)})^{-1}x_i^{-1},x_i (f_i^{(k)})^{-1}x_i^{-1}\right]x_i&, \mbox{ if } i=j\\
   x_i^{-1} (c_j^{(1)})^{-1}d^{3-2i}f_i^{(k)}c_j^{(1)}(f_i^{(k)})^{-1}f_i^{(k)} (c_j^{(1)})^{-1}(f_i^{(k)})^{-1}\cdot &, \mbox{ if } i\neq j\\   (f_i^{(k)})^{-1} f_i^{(k)} d^{2i-3} (f_i^{(k)})^{-1} f_i^{(k)} c_j^{(1)} (f_i^{(k)})^{-1} f_i^{(k)}x_i &    \end{array} \right.\\
   & = \left\{\begin{array}{ll}x_i^{-1} \left[(c_i^{(1)})^{-1},f_i^{(k)}\right]  \left[f_i^{(k)} (c_i^{(1)})^{-1}(f_i^{(k)})^{-1},(f_i^{(k)})^{-1}\right]x_i&, \mbox{ if } i=j\\ 
   1 &, \mbox{ if } i\neq j   \end{array} \right.\\
   & = 1 \\
   \end{split}
  \]

 \end{proof}

 To obtain $S_2$ observe that using $\left[X^{(1)},X^{(k)}\right]=\left\{1\right\}$ for $k\neq 1$ and the relation $\left[a_1^{(1)},a_2^{(1)}\right]\cdot d$ in $G$, we obtain
 \[
   \left[x_1,x_2\right] = d^{-1}
 \]
 
 Hence, the set of relations $S_2$ is given by
 \[
  S_2=\left\{\left[x_1,x_2\right] d \right\}.
 \]
 
 To obtain the set $S_3$ we recall that $G$ has a finite presentation of the form \eqref{eqnPresG}. Hence, it suffices to express all of the relations in the presentation \eqref{eqnPresG} as words in $\mathcal{K}$ modulo relations of the form $S_1$ and $S_2$. For group elements $g,h$ we write $g\sim h$ if $g$ and $h$ are in the same conjugacy class.
 
 \begin{lemma}
  In the free group $F(\mathcal{X}\cup \mathcal{K})$ modulo the relations $S_1$ and $S_2$, and the identifications made in Proposition \ref{propgenset}, we obtain the following equivalences of words:
 \begin{enumerate}
  \item $\left[a_i^{(1)},a_j^{(k)}\right] \sim 1$
  \item $\left[a_i^{(1)},b_j^{(k)}\right] \sim 1$
  \item $\left[b_i^{(1)},a_j^{(k)}\right] \sim 1$
 \end{enumerate}
 for all $i,j=1,2$ and $k=2,\cdots, r$.
 \label{lemCommRel1}
 \end{lemma}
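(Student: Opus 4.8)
The plan is to work in the group $\Gamma := \langle \mathcal{X}\cup\mathcal{K}\mid S_1\cup S_2\rangle$ and to check that each commutator in (1)--(3), once its letters are rewritten through the identifications of Proposition \ref{propgenset}, represents the trivial element of $\Gamma$. Writing $x_1=a_1^{(1)}$ and $x_2=a_2^{(1)}$ for the chosen lifts of $\mu_1,\mu_2$, the identifications read $a_j^{(k)}=(f_j^{(k)})^{-1}x_j$, $b_i^{(1)}=(c_i^{(1)})^{-1}x_i$ and $b_j^{(k)}=(g_j^{(k)})^{-1}(c_j^{(1)})^{-1}x_j$ for $k=2,\dots,r$, so that after substitution every commutator in the statement becomes a word in $x_1,x_2$ and the letters of $\mathcal{K}$. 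I would reduce such words using two mechanisms. First, $S_1$ together with Lemma \ref{lemVW} lets us rewrite $x_i^{\pm1}\kappa\, x_i^{\mp1}$ as an explicit word in $\mathcal{K}$ for every $\kappa\in\mathcal{K}$; in particular \eqref{eqnnorm1}--\eqref{eqnnorm2} show that $a_j^{(k)}$ and $b_j^{(k)}$ commute with $c_i^{(1)}$ and $d$ in $\Gamma$, and \eqref{eqncomm1}--\eqref{eqncomm2} give $x_i f_j^{(k)}x_i^{-1}$ and $x_i g_j^{(k)}x_i^{-1}$. Second, $S_2$ gives the single identity $x_i x_j x_i^{-1}=d^{\,2i-3}x_j$ for $\{i,j\}=\{1,2\}$. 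All of these formulas are independent of $k$, so the argument will be uniform in $k$.

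Now I would treat the three families in turn, in each case using the commutator identities $[x_i,uv]=[x_i,u]\cdot u[x_i,v]u^{-1}$ and $[uv,w]=u[v,w]u^{-1}\cdot[u,w]$ to isolate a commutator of the form $[x_i,(\text{word in }\mathcal{K})^{\pm1}]$, which is then evaluated by the formulas above. For (1): $[a_i^{(1)},a_j^{(k)}]=[x_i,(f_j^{(k)})^{-1}x_j]$ collapses to $[x_i,(f_i^{(k)})^{-1}]$ when $i=j$, which is trivial by the $i=j$ clause of \eqref{eqncomm1}; when $i\neq j$ it equals $\big(x_i(f_j^{(k)})^{-1}x_i^{-1}\big)\big(x_ix_jx_i^{-1}\big)x_j^{-1}f_j^{(k)}$, where the factor $d^{-(2i-3)}$ produced by \eqref{eqncomm1} cancels the factor $d^{\,2i-3}$ produced by $S_2$, leaving the empty word. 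For (2): with $b_j^{(k)}=(g_j^{(k)})^{-1}(c_j^{(1)})^{-1}x_j$ the same manoeuvre reduces $[a_i^{(1)},b_j^{(k)}]$ to $[x_i,(c_i^{(1)}g_i^{(k)})^{-1}]$ when $i=j$, and one uses \eqref{eqnnorm1} on the $c$-part and the $i=j$ clause of \eqref{eqncomm2} on the $g$-part to see that $x_i$ commutes with $c_i^{(1)}g_i^{(k)}$; when $i\neq j$, conjugating $(c_j^{(1)}g_j^{(k)})^{-1}$ by $x_i$ via \eqref{eqnnorm1} and \eqref{eqncomm2} again yields a factor $d^{-(2i-3)}$ which cancels the $d^{\,2i-3}$ coming from $S_2$. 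For (3): write $b_i^{(1)}=(c_i^{(1)})^{-1}x_i$ and split $[b_i^{(1)},a_j^{(k)}]=(c_i^{(1)})^{-1}[x_i,a_j^{(k)}]c_i^{(1)}\cdot[(c_i^{(1)})^{-1},a_j^{(k)}]$; the middle factor is trivial by (1), and $[(c_i^{(1)})^{-1},a_j^{(k)}]=1$ because $a_j^{(k)}=(f_j^{(k)})^{-1}x_j$ conjugates $c_i^{(1)}$ trivially by \eqref{eqnnorm1}.

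The one real point of care is the bookkeeping in the cases $i\neq j$ of (1) and (2): one must track the powers of $d$ produced, on the one hand, by the conjugation formulas of Lemma \ref{lemVW} --- the $d^{\,2i-3}$ in \eqref{eqncomm1} and the $d^{\,2i-3}$ hidden inside $W_{i,j,1}$ in \eqref{eqncomm2} --- and, on the other hand, by the single application of $S_2$, and verify that they are mutually inverse. Everything else is formal commutator calculus: no case requires more than one use of $S_2$ and finitely many substitutions from $S_1$, and since the split formulas only ever produce conjugates of the trivial word, working up to conjugacy, as the statement does, costs nothing.
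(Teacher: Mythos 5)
Your argument is correct and is essentially the paper's own proof: substitute the identifications, isolate the conjugations of $\mathcal{K}$-letters by $x_i$ using $S_1$ (whose right-hand sides are the words $V_{i,j,1},W_{i,j,1}$ of Lemma \ref{lemVW}), and let the power of $d$ supplied by $S_2$ cancel the one coming from $V$ or $W$; the only cosmetic difference is in (3), where the paper expands both entries of the commutator at once while you reduce to (1) plus the fact that $a_j^{(k)}$ centralises $c_i^{(1)}$. One small caveat: your preliminary remark that $b_j^{(k)}$ commutes with $c_i^{(1)}$ and $d$ modulo $S_1\cup S_2$ does not follow from \eqref{eqnnorm1}--\eqref{eqnnorm2} alone (it would need commutation of $g_j^{(k)}$ with $c_i^{(1)}$ and $d$, which is an $S_3$-type relation), but since none of your three case computations actually uses it, the proof is unaffected.
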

 
 \begin{proof} ~
 
 (1) follows from $\left[x_i,f_j^{(k)}\right] V_{i,j,1}^{-1}$:
  \[
  \begin{split}
  \left[a_i^{(1)}, a_j^{(k)}\right] &= \left[x_i,(f_j^{(k)})^{-1}x_j\right] \sim \left[x_i,x_j\right]\left[f_j^{(k)},x_i\right]=1 
  \end{split}
  \] 
 
 (2) follows from $\left[x_i,g_j^{(k)}\right]W_{i,j,1}^{-1}$:
 \[
   \begin{split}
  \left[a_i^{(1)}, b_j^{(k)}\right] &= \left[x_i,(g_j^{(k)})^{-1}(c_j^{(1)})^{-1}x_j\right] \sim \left[g_j^{(k)},x_i\right] x_i (c_j^{(1)})^{-1} x_i^{-1} \left[x_i,x_j\right] c_j^{(1)} = 1 
  \end{split} 
 \]
 
 (3) follows from $\left[x_i,f_j^{(k)}\right] V_{i,j,1}^{-1}$ and $x_jc_i^{(1)}x_j^{-1}f_j^{(k)}(c_i^{(1)})^{-1}(f_j^{(k)})^{-1}$:
 \[
  \begin{split}
  \left[b_i^{(1)}, a_j^{(k)}\right] &= \left[(c_i^{(1)})^{-1}x_i,(f_j^{(k)})^{-1}x_j\right] \sim x_jc_i^{(1)}x_j^{-1} f_j^{(k)}(c_i^{(1)})^{-1}(f_j^{(k)})^{-1}\left[f_j^{(k)},x_i\right]\left[x_i,x_j\right] = 1 
  \end{split} 
 \]
\end{proof}

 \begin{lemma}
  In the free group $F(\mathcal{X}\cup \mathcal{K})$ modulo the relations $S_1$, $S_2$, the identifications made in Proposition \ref{propgenset}, and the relations (1)-(3) from Lemma \ref{lemCommRel1}, we obtain the following equivalences of words:
  \begin{enumerate}
   \item $\left[b_i^{(1)},b_j^{(k)}\right] \sim \left[c_i^{(1)},g_j^{(k)}\right]\left[(c_j^{(1)})^{-1} f_j^{(k)}, c_i^{(1)}\right]$
  \item $\left[a_i^{(k)},a_j^{(l)}\right]=(f_i^{(k)})^{-1}(f_j^{(l)})^{-1}V_{i,j,1}^{-1}f_i^{(k)}f_j^{(l)}  \sim \left[f_i^{(k)},f_j^{(l)}\right]V_{i,j,1}^{-1}$
  \item $\left[a_i^{(k)},b_j^{(l)}\right] \sim \left[f_i^{(k)},g_j^{(l)}\right] W_{i,j,1}^{-1}$
  \item $\left[b_i^{(k)},b_j^{(l)}\right] =(c_i^{(1)}g_i^{(k)})^{-1}(g_j^{(l)}c_j^{(1)})^{-1} V_{i,j,1}^{-1} (c_i^{(1)}g_i^{(k)})(c_j^{(1)}g_j^{(k)})
 \sim \left[(c_i^{(1)}g_i^{(k)}),(c_j^{(1)}g_j^{(k)})\right] V_{i,j,1}^{-1}$
 \end{enumerate}
  for all $i,j=1,2$ and $k,l=2,\cdots, r$.
  \label{lemCommRel2}
 \end{lemma}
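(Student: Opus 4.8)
The plan is to prove each of the four equivalences by exactly the same mechanism used in Lemma \ref{lemCommRel1}: substitute the identifications from Proposition \ref{propgenset} to rewrite the generators $a_i^{(k)},b_i^{(k)}$ in terms of $x_i$ and the elements of $\mathcal{K}$, then expand the commutator, then repeatedly apply the relations $S_1$ (in particular the conjugation relations \eqref{eqnnorm1}, \eqref{eqnnorm2} and the commutator relations \eqref{eqncomm1}, \eqref{eqncomm2}), the relation $S_2$ (which gives $[x_1,x_2]=d^{-1}$), and the relations (1)--(3) of Lemma \ref{lemCommRel1} that we are now allowed to assume, discarding conjugating prefixes/suffixes as we go since we only need equivalence up to conjugacy. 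Concretely: for (1), write $b_i^{(1)}=(c_i^{(1)})^{-1}a_i^{(1)}=(c_i^{(1)})^{-1}x_i$ and $b_j^{(k)}=(g_j^{(k)})^{-1}(c_j^{(1)})^{-1}x_j$ (using $b_j^{(k)}=(g_j^{(k)})^{-1}b_j^{(1)}$), expand $[(c_i^{(1)})^{-1}x_i,\,(g_j^{(k)})^{-1}(c_j^{(1)})^{-1}x_j]$, and push all occurrences of $x_i,x_j$ to the outside using $S_1$ and Lemma \ref{lemCommRel1}, at which point the two $x$'s cancel via $S_2$ and one is left (up to conjugacy) with $[c_i^{(1)},g_j^{(k)}]\,[(c_j^{(1)})^{-1}f_j^{(k)},c_i^{(1)}]$. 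The identities (2), (3), (4) are handled the same way: substitute $a_i^{(k)}=(f_i^{(k)})^{-1}x_i$, $b_i^{(k)}=(g_i^{(k)})^{-1}(c_i^{(1)})^{-1}x_i$, expand, and use $S_1$ to move the $x$'s out and make them cancel. The stated first (non-cyclically-reduced) forms in (2) and (4) are just the raw output of this substitution before taking conjugacy classes; the $V_{i,j,1}$ appears precisely because $[x_i,x_j]^{-1}$ (coming from the two $x$'s passing each other) together with the commutator relation \eqref{eqncomm1} rewriting $[x_i,(f_j^{(l)})^{\pm}]$ contributes the factor recorded in Lemma \ref{lemVW}.

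A useful bookkeeping device, already implicit in Lemma \ref{lemCommRel1}, is that in the quotient by $S_1\cup S_2$ one has the conjugation rule $x_i\,c_j^{(1)}\,x_i^{-1}=f_i^{(k)}\,c_j^{(1)}\,(f_i^{(k)})^{-1}$ and $x_i\,d\,x_i^{-1}=f_i^{(k)}\,d\,(f_i^{(k)})^{-1}$, so conjugation by $x_i$ acts on any word in $c_1^{(1)},c_2^{(1)},d$ exactly as conjugation by $f_i^{(k)}$ does, and $[x_i,f_j^{(k)}]=V_{i,j,1}$, $[x_i,g_j^{(k)}]=W_{i,j,1}$. I would record this once as a short observation and then each of (1)--(4) becomes a mechanical commutator expansion in a group where every letter's interaction with $x_1,x_2$ is known. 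Since the final words involve only elements of $\mathcal{K}$ (all $x$'s having cancelled), the computations take place, up to conjugacy, inside $\langle\mathcal{K}\rangle$, and the output is exactly the relations $\mathcal{R}_1^{(r)}$ appearing in Theorem \ref{thmFinPres}.

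The main obstacle is purely computational stamina rather than conceptual: keeping track of the $d^{2i-3}$ exponents and the nested conjugations by $f_i^{(k)}$ and $c_j^{(1)}$ in \eqref{eqncomm2} without sign or ordering errors, especially in case (4) where both entries of the commutator are products $c_i^{(1)}g_i^{(k)}$, $c_j^{(1)}g_j^{(k)}$ and one must expand a commutator of products and then repeatedly apply \eqref{eqncomm1}--\eqref{eqncomm2}. A secondary subtlety is being careful that in case (1) one is allowed to use only relations (1)--(3) of Lemma \ref{lemCommRel1} (not (4), which does not exist) — i.e. verifying that the cancellations of the $a_j^{(k)}$- and $b_j^{(k)}$-type factors against $x_i,x_j$ genuinely only require those three identities plus $S_1,S_2$. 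I expect no genuine difficulty beyond careful rewriting, and I would present the proof as four short displayed chains of $\sim$ and $=$, mirroring the format of the proof of Lemma \ref{lemCommRel1}, suppressing the most routine intermediate steps.
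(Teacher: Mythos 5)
Your plan is essentially the paper's own proof: since $x_i=a_i^{(1)}$, substituting the identifications from Proposition \ref{propgenset} and pushing the $x$'s out using $S_1$, $S_2$ and Lemma \ref{lemCommRel1}(1)--(3), working only up to conjugacy, is exactly the computation the paper carries out (it just writes it by inserting trivial factors $(a_i^{(1)})^{\mp 1}a_i^{(1)}$ and invoking the commutations of Lemma \ref{lemCommRel1} directly, rather than by formal substitution in terms of $x_i$). The only gap is one of execution, not of method: the four displayed chains of $=$ and $\sim$ still need to be written out, but they go through as you predict.
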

\begin{proof}
 (1) follows from Lemma \ref{lemCommRel1}(2) and the relation $x_jc_i^{(1)}x_j^{-1}f_j^{(k)}(c_i^{(1)})^{-1}(f_j^{(k)})^{-1}$: 
 \[
   \begin{split}
  \left[b_i^{(1)}, b_j^{(k)}\right] &= b_i^{(1)}(a_i^{(1)})^{-1}b_j^{(k)}(b_j^{(1)})^{-1} b_j^{(1)} a_i^{(1)}(b_i^{(1)})^{-1} (b_j^{(1)})^{-1}b_j^{(1)}(b_j^{(k)})^{-1}\\
  &\sim \left[c_i^{(1)},g_j^{(k)}\right]b_j^{(1)}c_i^{(1)}(b_j^{(1)})^{-1} (c_i^{(1)})^{-1}\\
  &=\left[c_i^{(1)},g_j^{(k)}\right]b_j^{(1)}(a_j^{(1)})^{-1} a_j^{(1)} c_i^{(1)} (a_j^{(1)})^{-1} a_j^{(1)} (b_j^{(1)})^{-1} (c_i^{(1)})^{-1}\\
  &=\left[c_i^{(1)},g_j^{(k)}\right]\left[(c_j^{(1)})^{-1} f_j^{(k)}, c_i^{(1)}\right]
  \end{split} 
 \]
 
 (2) follows from Lemma \ref{lemCommRel1}(1):
 \[
  \begin{split}
  \left[a_i^{(k)}, a_j^{(l)}\right] &= a_i^{(k)} (a_i^{(1)})^{-1} a_j^{(l)}(a_j^{(1)})^{-1} \left[a_j^{(1)},a_i^{(1)}\right] a_i^{(1)} (a_i^{(k)})^{-1} a_j^{(1)} (a_j^{(k)})^{-1}\\
  &= (f_i^{(k)})^{-1} (f_j^{(l)})^{-1} V_{i,j,1}^{-1} f_i^{(k)} f_j^{(l)}\\
  &\sim \left[f_i^{(k)},f_j^{(l)}\right] V_{i,j,1}^{-1}
  \end{split} 
 \]
 
 (3) follows from Lemma \ref{lemCommRel1}(2),(3) and (4) follows from Lemma \ref{lemCommRel1}(2) by similar calculations.

\end{proof}

 We introduce the words $$S^{(k)}(\mathcal{K})=(f_1^{(k)})^{-1}(f_2^{(k)})^{-1}df_1^{(k)}f_2^{(k)}$$ and $$T^{(k)}(\mathcal{K})=(c_1^{(1)}g_1^{(k)})^{-1} (c_2^{(1)} g_2^{(k)})^{-1}d c_1^{(k)}g_1^{(k)}c_2^{(1)}g_2^{(k)}$$ for $2\leq k\leq r$. Notice that these words appear in the presentation in Theorem \ref{thmFinPres}.
 
 The only relation of $G$ that we did not express, yet, is the relation $\left[a_1^{(1)},a_2^{(2)}\right]\left[b_1^{(1)},b_2^{(2)}\right]$. Modulo $S_1$ and $S_2$ it satisfies
  \[
  \begin{split}
   \left[a_1^{(1)},a_2^{(1)}\right]\left[b_1^{(1)},b_2^{(1)}\right] & = \left[x_1,x_2\right]\left[(c_1^{(1)})^{-1}x_1,(c_2^{(1)})^{-1}x_2\right]\\
   & = d^{-1} (c_1^{(1)})^{-1} x_1 (c_2^{(1)})^{-1}x_2 x_1^{-1} c_1^{(1)} x_2^{-1} c_2^{(1)}\\
   & = d^{-1} (c_1^{(1)})^{-1} f_1^{(k)} (c_2^{(1)})^{-1} (f_1^{(k)})^{-1} d^{-1} f_2^{(k)} c_1^{(1)} (f_2^{(k)})^{-1}c_2^{(1)}
  \end{split}
  \]

 We define the set of relations $S_3$ by
 \[
  S_3=\left\{ 
  \begin{array}{l}
    \left[f_i^{(k)},f_j^{(l)}\right]V_{i,j,1}^{-1}, \left[f_i^{(k)},g_j^{(l)}\right] W_{i,j,1}^{-1},\\
    \left[c_i^{(1)},g_j^{(k)}\right] W_{i,j,1}^{-1}c_i^{(1)} V_{i,j,1}^{-1}(c_i^{(1)})^{-1},    \left[g_i^{(k)},c_jg_j^{(l)}\right]W_{j,i,1},\\
    d^{-1}c_1^{-1}f_1^{(k)}c_2^{-1}(f_1^{(k)})^{-1}d^{-1}f_2^{(k)}c_1 (f_2^{(k)})^{-1}c_2, S^{(k)}\cdot T^{(k)}    
  \end{array}
  \right\}
 \]

 \begin{theorem}
  The group $G=\pi_1S^{(1)}\times \cdots \times \pi_1 S^{(r)}$ is isomorphic to the group defined by the finite presentation $\left\langle \mathcal{X}, \mathcal{K}\mid \mathcal{R}\right\rangle=$ 
 {\small \[
\sbox0{$
  \begin{array}{l} 
  x_1,x_2, c_1^{(1)},c_2^{(1)},d,\\ 
  f_i^{(k)},g_i^{(k)},\\
   k=2,\cdots,r,\\i=1,2
  \end{array}
  \left|
  \begin{array}{l}
   \left[x_i,f_j^{(k)}\right]V_{i,j,1}^{-1},~ \left[x_i,g_j^{(k)}\right]W_{i,j,1}^{-1},~ x_i^{\eps}(c_j^{(1)})x_i^{-\eps}(f_i^{(k)})^{\eps}(c_j^{(1)})^{-1}(f_i^{(k)})^{-\eps},\\
   ~ x_i^{\eps}dx_i^{-\eps}(f_i^{(k)})^{\eps}d^{-1}(f_i^{(k)})^{-\eps}, \left[x_1,x_2\right] \cdot d,\\
   \left[c_i^{(1)},g_{j}^{(k)}\right] \left[(c_j^{(1)})^{-1}f_j^{(k)},c_i^{(1)}\right],~ \left[f_i^{(k)},f_j^{(l)}\right]V_{i,j,1}^{-1},~\left[f_i^{(k)},g_j^{(l)}\right]W_{i,j,1}^{-1},\\  \left[c_i^{(1)}g_i^{(k)},c_j^{(1)}g_j^{(l)}\right] V_{i,j,1}^{-1},\\
   d^{-1}(c_1^{(1)})^{-1}f_1^{(k)}(c_2^{(1)})^{-1}(f_1^{(k)})^{-1}d^{-1}f_2^{(k)}c_1^{(1)} (f_2^{(k)})^{-1}c_2^{(1)},~ S^{(k)}\cdot T^{(k)},\\
   i,j=1,2, \eps=\pm 1, k,l=2,\cdots, r, l\neq k
  \end{array}
  \right.
  $}
\mathopen{\resizebox{1.2\width}{\ht0}{$\Bigg\langle$}}
\usebox{0}
\mathclose{\resizebox{1.2\width}{\ht0}{$\Bigg\rangle$}}
\]
}
 \label{thmFinPresA}
 \end{theorem}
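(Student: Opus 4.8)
The plan is to recognise $\langle\mathcal{X},\mathcal{K}\mid\mathcal{R}\rangle$ as the group $G$ by instantiating the presentation scheme of \cite[Section~1.4]{BauBriMilSho-00} for the short exact sequence $1\to K\to G\xrightarrow{\phi}\pi_1E\to 1$ with $\phi$ as in \eqref{eqngphom}. That scheme produces a presentation of $G$ on the generating set $\mathcal{X}\cup\mathcal{K}$ with relations $S_1\cup S_2\cup S_3$, and every one of these pieces has by now been made explicit: $\mathcal{X}=\{x_1,x_2\}$ is a lift of the generating set $\{\mu_1,\mu_2\}$ of $\pi_1E$ under $\phi$; $\mathcal{K}$ generates $K$ by Proposition~\ref{propgenset}; $S_1$ records the conjugation action and was computed in Lemma~\ref{lemVW} and then replaced by the equivalent set $S_1'$ in Lemma~\ref{lemNoEps}; the unique relator $[\mu_1,\mu_2]$ of $\pi_1E$ lifts to $S_2=\{[x_1,x_2]d\}$; and $S_3$ is the displayed finite set of words in $\mathcal{K}$. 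Thus $\mathcal{R}=S_1'\cup S_2\cup S_3$, up to a purely cosmetic rewriting of a few members of $S_3$, each replacement being a free equality or a consequence of $S_1$ (for instance rewriting $[c_i^{(1)},g_j^{(k)}]W_{i,j,1}^{-1}c_i^{(1)}V_{i,j,1}^{-1}(c_i^{(1)})^{-1}$ as $[c_i^{(1)},g_j^{(k)}][(c_j^{(1)})^{-1}f_j^{(k)},c_i^{(1)}]$). The only thing that still has to be justified is that this particular $S_3$ is adequate, i.e. that $\langle\mathcal{X},\mathcal{K}\mid S_1\cup S_2\cup S_3\rangle$ really is $G$ and not a proper quotient.

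Adequacy amounts to checking that, after rewriting the generators $a_i^{(k)},b_i^{(k)}$ of the known finite presentation \eqref{eqnPresG} of $G$ as the words $x_i$, $(f_i^{(k)})^{-1}x_i$, $(c_i^{(1)})^{-1}x_i$ and $(g_i^{(k)})^{-1}(c_i^{(1)})^{-1}x_i$ dictated by the identifications of Proposition~\ref{propgenset}, every relator of \eqref{eqnPresG} becomes a consequence of $S_1\cup S_2\cup S_3$. The commuting relators $[\ast^{(k)},\ast^{(l)}]$, $k\neq l$, split into the ``mixed'' case $k=1$, treated by Lemma~\ref{lemCommRel1} together with Lemma~\ref{lemCommRel2}(1), and the case $k,l\geq 2$, treated by Lemma~\ref{lemCommRel2}(2)--(4); in each case the rewritten commutator is conjugate, modulo $S_1\cup S_2$, to a member of $S_3$ and hence trivial. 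The surface relator $[a_1^{(1)},a_2^{(1)}][b_1^{(1)},b_2^{(1)}]$ reduces, modulo $S_1\cup S_2$, to the word $d^{-1}(c_1^{(1)})^{-1}f_1^{(k)}(c_2^{(1)})^{-1}(f_1^{(k)})^{-1}d^{-1}f_2^{(k)}c_1^{(1)}(f_2^{(k)})^{-1}c_2^{(1)}\in S_3$, and the surface relator of the $k$-th factor reduces, via Lemma~\ref{lemCommRel2}(2) and~(4), to $S^{(k)}T^{(k)}\in S_3$. This is exactly the content accumulated in Section~\ref{secPreLem}, so the verification is an assembly. If one prefers an argument not quoting \cite{BauBriMilSho-00} as a black box, the same computations build a homomorphism $\Theta\colon G\to\langle\mathcal{X},\mathcal{K}\mid\mathcal{R}\rangle$ out of the presentation \eqref{eqnPresG}; the evident homomorphism $\Phi$ in the opposite direction, sending each generator to the element of $G$ prescribed by Proposition~\ref{propgenset}, is well defined because every relator of $\mathcal{R}$ holds in $G$ by Lemmas~\ref{lemVW}--\ref{lemCommRel2} and the displayed computations, and $\Theta$ and $\Phi$ are mutually inverse since $\Phi\circ\Theta$ and $\Theta\circ\Phi$ fix the respective generating sets.

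The main obstacle is bookkeeping rather than a new idea: one has to make sure that every relator of \eqref{eqnPresG} has genuinely been covered — the commutators between all pairs of factors and the surface relators in each of the $r$ factors — and that the conjugacy-level reductions ``$\sim$'' of Lemmas~\ref{lemCommRel1} and~\ref{lemCommRel2} really do descend to equalities in the quotient. The latter rests on the fact, recorded in Lemma~\ref{lemgenset}(1), that the conjugating elements produced in those reductions can always be taken inside $\langle\mathcal{K}\rangle$, so that conjugating a relator of $\mathcal{R}$ by them stays within the normal closure of $\mathcal{R}$. One further small point: in checking $\Theta\circ\Phi=\mathrm{id}$, all generators except $d$ are fixed by free cancellation, while for $d$ one uses the relator $[x_1,x_2]d$ of $S_2$ together with the first-factor relator of $S_3$ to see that $[(c_1^{(1)})^{-1}x_1,(c_2^{(1)})^{-1}x_2]=d$ in $\langle\mathcal{X},\mathcal{K}\mid\mathcal{R}\rangle$. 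Finally, the cosmetic passage from the $S_3$ written just before the statement to the version appearing inside $\mathcal{R}$ should be recorded as an equivalence of presentations, which it is since each substitution is a consequence of $S_1$.
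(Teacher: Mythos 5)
Your proposal is correct and follows essentially the same route as the paper: it defines the two mutually inverse homomorphisms on generators (your $\Phi$ and $\Theta$ are the paper's $\psi$ and $\psi^{-1}$), with well-definedness supplied by Lemmas \ref{lemVW}--\ref{lemCommRel2} and the displayed computation for the surface relator, exactly as in the paper's proof of Theorem \ref{thmFinPresA}. Your additional remarks — that the conjugating elements in the ``$\sim$'' reductions lie in $\langle\mathcal{K}\rangle$ by Lemma \ref{lemgenset}(1), and that $\Theta\circ\Phi$ fixes $d$ via $S_2$ and the first-factor relator — are correct and in fact spell out points the paper leaves implicit.
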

 
 \begin{proof}
  By construction there is a canonical way of identifying $G$ given by the presentation \ref{eqnPresG} with the presentation $\left\langle \mathcal{X}, \mathcal{K}\mid \mathcal{R}\right\rangle$. Namely, consider the map on generators $\mathcal{X}\cup \mathcal{K}$ defined by:
  \[
   \begin{split}
   x_i & \mapsto a_i^{(1)}\\
   c_i^{(1)}&\mapsto a_i^{(1)}(b_i^{(1)})^{-1}\\
   d &\mapsto \left[b_1^{(1)},b_2^{(1)}\right]\\
   f_i^{(k)} &\mapsto a_i^{(1)} (a_i^{(k)})^{-1}\\
   g_i^{(k)} &\mapsto b_i^{(1)} (b_i^{(k)})^{-1}\\
   \end{split}
  \]
  By construction of $S_1'$, $S_2$ and $S_3$ the image of all relations in $\mathcal{R}$ vanishes in $G$. In particular this map extends to a well-defined group homomorphism $\psi: \left\langle \mathcal{X}, \mathcal{K}\mid \mathcal{R}\right\rangle\rightarrow G$. The map $\psi$ is onto, because of the identities $a_i^{(1)}=\phi(x_i)$, $b_i^{(1)}=\phi(c_i^{(1)})^{-1}\phi(x_i)$, $a_i^{(k)}=\phi(f_i^{(k)})^{-1} \phi(x_i)$, $b_i^{(k)}= \phi(g_i^{(k)})^{-1}b_i^{(1)}$. 
  
  Hence, we only need to check that $\phi$ is injective. For this it suffices to construct a well-defined inverse homomorphism. It is obtained by considering the map on generators of $G$ defined by
  \[
   \begin{split}
    a_i^{(1)} & \mapsto x_i\\
    b_i^{(1)} & \mapsto (c_i^{(1)})^{-1} x_i\\
    a_i^{(k)} & \mapsto (f_i^{(k)})^{-1} x_i,~k\geq 2\\
    b_i^{(k)} & \mapsto (g_i^{(k)})^{-1} (c_i^{(1)})^{-1} x_i, ~k\geq 2\\
   \end{split}
  \]
  
  Since we expressed all relations in the presentation \ref{eqnPresG} in terms of the generators of $\mathcal{R}$ under the identification given by $\psi$ using only relations of the form $S_1$ and $S_2$, they vanish trivially under this map and thus there is an extension to a group homomorphism $\psi ^{-1}: G\rightarrow \left\langle \mathcal{X}, \mathcal{K}\mid \mathcal{R}\right\rangle$ inverse to $\psi$.
  
  For instance
  \[
  \begin{split}
   \psi^{-1}\left(\left[a_1^{(1)},a_2^{(1)}\right]\left[b_1^{(1)},b_2^{(1)}\right]\right) & = \left[x_1,x_2\right]\left[(c_1^{(1)})^{-1}x_1,(c_2^{(1)})^{-1}x_2\right]\\
   & = d^{-1} (c_1^{(1)})^{-1} x_1 (c_2^{(1)})^{-1}x_2 x_1^{-1} c_1^{(1)} x_2^{-1} c_2^{(1)}\\
   & = d^{-1} (c_1^{(1)})^{-1} f_1^{(k)} (c_2^{(1)})^{-1} (f_1^{(k)})^{-1} d^{-1} f_2^{(k)} c_1^{(1)} (f_2^{(k)})^{-1}c_2^{(1)}
  \end{split}
  \]
  which is indeed a relation in $\mathcal{R}$. Similarly we obtain that $\psi^{-1}$ vanishes on all other relations by going through the proofs of Lemma \ref{lemCommRel1} and \ref{lemCommRel2}.
  
 \end{proof}
 
 We will now deduce a presentation for the group $\pi_1 S^{(r)}=\left\langle \left[a_1^{(r)},a_2^{(r)}\right]\left[b_1^{(r)},b_2^{(r)}\right]\right\rangle$ of the form of Remark 2.1(1) in \cite{BriHowMilSho-13} with respect to the epimorphism $\pi_1 S^{(r)}\rightarrow \pi_1 E$, $a_i^{(r)},b_i^{(r)}\mapsto \mu_i$ and the presentation $\left\langle \mu_1^{-1},\mu_2^{-1}\mid \left[\mu_1^{-1},\mu_2^{-1}\right]\right\rangle$ of $\pi_1E$. That is, we derive a presentation of the form $\left\langle \left\{x_1,x_2\right\},C\mid \widehat{R}, S\right\rangle$ such that $x_i\mapsto \mu_i^{-1}$, $c\mapsto 1$, $\widehat{R}$ consists of a relation of the form $\left[x_1,x_2\right]U(C)$ and $S$ consists of a finite set of words in $C^*$. Here $C^*$ is defined to be the set of conjugates of elements of $C$ by words in the free group on $X$.
 
 \begin{proposition}
  The finite presentation
  \[
   \left\langle x_1,x_2,c_1^{(r)},c_2^{(r)},\delta \mid \left[x_1,x_2\right]\delta, \delta ^{-2} x_2x_1\left[(x_1c_1^{(r)})^{-1},(x_2c_2^{(r)})^{-1}\right]x_2^{-1}x_1^{-1} \right\rangle
  \]

  is a presentation for $\pi_1S^{(r)}$ of the form described in the previous paragraph, with the isomorphism given by $x_i\mapsto (a_i^{(r)})^{-1}$, $c_1^{(r)}\mapsto a_i^{(r)}(b_i^{(r)})^{-1}$ and $\delta \mapsto \left[(a_2^{(r)})^{-1},(a_1^{(r)})^{-1}\right]$.
 \label{propFinPresB}
 \end{proposition}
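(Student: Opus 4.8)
The plan is to follow the same strategy as in the proof of Theorem~\ref{thmFinPresA}: produce mutually inverse homomorphisms between the displayed group and $\pi_1 S^{(r)}$, and then observe that the presentation is already of the required shape. Write $P$ for the group with generators $x_1,x_2,c_1^{(r)},c_2^{(r)},\delta$ and relators $R_1=[x_1,x_2]\delta$ and $R_2=\delta^{-2}x_2x_1[(x_1c_1^{(r)})^{-1},(x_2c_2^{(r)})^{-1}]x_2^{-1}x_1^{-1}$, and put $C=\{c_1^{(r)},c_2^{(r)},\delta\}$.

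First I would verify that the assignment $x_i\mapsto(a_i^{(r)})^{-1}$, $c_i^{(r)}\mapsto a_i^{(r)}(b_i^{(r)})^{-1}$, $\delta\mapsto[(a_2^{(r)})^{-1},(a_1^{(r)})^{-1}]$ sends $R_1$ and $R_2$ to the identity of $\pi_1 S^{(r)}$, hence defines a homomorphism $\psi\colon P\to\pi_1 S^{(r)}$. For $R_1$ this is immediate from $[(a_1^{(r)})^{-1},(a_2^{(r)})^{-1}]^{-1}=[(a_2^{(r)})^{-1},(a_1^{(r)})^{-1}]$; for $R_2$ one notes that $x_ic_i^{(r)}\mapsto(b_i^{(r)})^{-1}$, so the commutator becomes $[b_1^{(r)},b_2^{(r)}]$, and then the single surface relation $[b_1^{(r)},b_2^{(r)}]=[a_1^{(r)},a_2^{(r)}]^{-1}$ rewrites the relevant conjugate of this commutator as the square of the image of $\delta$, cancelling $\delta^{-2}$. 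The homomorphism $\psi$ is onto since $a_i^{(r)}=\psi(x_i)^{-1}$ and $b_i^{(r)}=\psi(x_ic_i^{(r)})^{-1}$.

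Next I would construct the candidate inverse $\theta\colon\pi_1 S^{(r)}\to P$ by $a_i^{(r)}\mapsto x_i^{-1}$ and $b_i^{(r)}\mapsto(x_ic_i^{(r)})^{-1}$. The only nontrivial point---and the one genuine computation in the proof---is that $\theta$ is well defined, i.e.\ that $[x_1^{-1},x_2^{-1}][(x_1c_1^{(r)})^{-1},(x_2c_2^{(r)})^{-1}]$ vanishes in $P$. Using $R_1$ to substitute $\delta=[x_2,x_1]$, the relator $R_2$ becomes $[x_2,x_1]^{-2}x_2x_1[(x_1c_1^{(r)})^{-1},(x_2c_2^{(r)})^{-1}]x_2^{-1}x_1^{-1}$, and a short free reduction (collapsing $[x_2,x_1]^{-2}x_2x_1$ to $[x_2,x_1]^{-1}x_1x_2$ and then passing to a cyclic conjugate) identifies this word with $[x_1^{-1},x_2^{-1}][(x_1c_1^{(r)})^{-1},(x_2c_2^{(r)})^{-1}]$, so the latter is trivial in $P$. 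Equivalently one may phrase the whole step as Tietze moves on $P$: eliminate $\delta$ via $R_1$, then pass to the generators $a_i^{(r)}=x_i^{-1}$, $b_i^{(r)}=(x_ic_i^{(r)})^{-1}$, in which the single remaining relator reads exactly $[a_1^{(r)},a_2^{(r)}][b_1^{(r)},b_2^{(r)}]$, recovering the standard genus-two presentation. A routine check on generators (again invoking $\delta=[x_2,x_1]$ in $P$) then shows $\psi$ and $\theta$ are mutually inverse, so $P\cong\pi_1 S^{(r)}$ via the stated isomorphism.

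It remains to confirm that the presentation has the form described before the statement. Composing $\psi$ with the epimorphism $\pi_1 S^{(r)}\to\pi_1 E$, $a_i^{(r)},b_i^{(r)}\mapsto\mu_i$, sends $x_i$ to $\mu_i^{-1}$ and each of $c_1^{(r)},c_2^{(r)},\delta$ to $1$; thus $\{x_1,x_2\}$ is a lift of the generating set $\{\mu_1^{-1},\mu_2^{-1}\}$ of the chosen presentation of $\pi_1 E$, and $C$ maps into the kernel. The relator $R_1$ is literally of the form $[x_1,x_2]U(C)$ with $U(C)=\delta$, and $R_2$ maps to the trivial element under the retraction $F(\{x_1,x_2\}\cup C)\to F(\{x_1,x_2\})$ killing $C$ (explicitly, $x_2x_1[x_1^{-1},x_2^{-1}]x_2^{-1}x_1^{-1}=1$); hence $R_2$ lies in the subgroup generated by $C^*$ and so is a word in $C^*$, as required. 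The only mild obstacle in the argument is the bookkeeping in the free-reduction step identifying $R_2$ (after eliminating $\delta$) with the genus-two relator; everything else is formal.
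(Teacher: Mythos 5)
Your proposal is correct and follows essentially the same route as the paper: the change of generators $a_i^{(r)}=x_i^{-1}$, $b_i^{(r)}=(x_ic_i^{(r)})^{-1}$, $\delta=[x_2,x_1]$, together with the observation that, modulo the relator $[x_1,x_2]\delta$, the second relator is a conjugate of the rewritten surface relator $[x_1^{-1},x_2^{-1}][(x_1c_1^{(r)})^{-1},(x_2c_2^{(r)})^{-1}]$. The only cosmetic difference is in certifying that the second relator is a word in $C^*$: the paper writes it out explicitly as $\delta^{-2}\,(x_2x_1(c_1^{(r)})^{-1}(x_2x_1)^{-1})\,(x_2(c_2^{(r)})^{-1}x_2^{-1})\,(x_1c_1^{(r)}x_1^{-1})\,(x_1x_2c_2^{(r)}(x_1x_2)^{-1})$, whereas you deduce it abstractly from the fact that the kernel of the retraction of $F(\{x_1,x_2\}\cup C)$ onto $F(\{x_1,x_2\})$ is generated by the $F(\{x_1,x_2\})$-conjugates of $C$ -- both verifications are valid.
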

 \begin{proof}
  Using Tietze transformations and the identifications $x_i=(a_i^{(r)})^{-1}$, $c_1^{(r)}=a_i^{(r)}(b_i^{(r)})^{-1}$, $\delta = \left[(a_2^{(r)})^{-1},(a_1^{(r)})^{-1}\right]$, we obtain
  \[
   \begin{split}
   \pi_1S^{(r)}&=\left\langle a_1^{(r)},a_2^{(r)},b_1^{(r)},b_2^{(r)}\mid \left[a_1^{(r)},a_2^{(r)}\right]\left[b_1^{(r)},b_2^{(r)}\right]\right\rangle\\
   &= \left\langle x_1,x_2,c_1^{(r)},c_2^{(r)},\delta \mid \left[x_1,x_2\right]\delta, \left[x_1^{-1},x_2^{-1}\right]\left[(x_1c_1^{(r)})^{-1},(x_2c_2^{(r)})^{-1}\right] \right\rangle
   \end{split}
 \]
 Using Tietze transformations and the relation $\left[x_1,x_2\right]\delta$ we obtain
\[
 \begin{split}
  &\left[x_1^{-1},x_2^{-1}\right]\left[(x_1c_1^{(r)})^{-1},(x_2c_2^{(r)})^{-1}\right]\\
  &=x_2^{-1}x_1^{-1}\delta ^{-2}  (x_2 x_1 (c_1^{(r)})^{-1} (x_2x_1)^{-1}) (x_2 (c_2^{(r)})^{-1} x_2^{-1}) (x_1 c_1^{(r)} x_1^{-1})  (x_1x_2c_2^{(r)}(x_1x_2)^{-1}) x_1x_2\\
  &\sim \delta ^{-2} \cdot (x_2 x_1 (c_1^{(r)})^{-1} (x_2x_1)^{-1})\cdot (x_2 (c_2^{(r)})^{-1} x_2^{-1}) \cdot (x_1 c_1^{(r)} x_1^{-1}) \cdot (x_1x_2c_2^{(r)}(x_1x_2)^{-1} )
 \end{split}
 \]
 This completes the proof.
 \end{proof}
 
 A subgroup $H\leq \Gamma_1\times \cdots \times \Gamma_r$ of a direct product is called \textit{subdirect} if its projection to every factor is surjective.
 
 \begin{lemma}
  The subgroup $K\leq \pi_1S^{(1)}\times\cdots\times \pi_1 S^{(r)}$ is subdirect and in fact the projection
  \[
  q: K\longrightarrow \pi_1S^{(1)}\times \cdots \times \pi_1 S^{(r-1)}
  \]
  is surjective.
  \label{lemProj}
 \end{lemma}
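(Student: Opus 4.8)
The plan is to show that $K = \ker\phi$ surjects onto the direct product of the first $r-1$ surface group factors by explicitly exhibiting, for any element of $\pi_1S^{(1)}\times\cdots\times\pi_1S^{(r-1)}$, a preimage in $K$. Since $\phi$ was defined in \eqref{eqngphom} to send $a_i^{(k)},b_i^{(k)}\mapsto\mu_i$ for \emph{every} $k=1,\dots,r$, the strategy is to take an arbitrary word $w$ in the generators $X^{(1)}\cup\dots\cup X^{(r-1)}$ representing a given element of $\pi_1S^{(1)}\times\cdots\times\pi_1S^{(r-1)}$, and then correct its image under $\phi$ by appending a compensating word supported on the last factor $\pi_1S^{(r)}$. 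Concretely, $\phi(w)$ is some word $u(\mu_1,\mu_2)\in\pi_1E$, and because $\pi_1S^{(r)}$ also surjects onto $\pi_1E$ via $a_i^{(r)},b_i^{(r)}\mapsto\mu_i$, we may pick a word $\tilde u$ in $X^{(r)}$ with $\phi(\tilde u) = u^{-1}$; then $w\cdot\tilde u\in\ker\phi = K$, and since $\tilde u$ lies entirely in the $r$-th factor, the projection of $w\tilde u$ to $\pi_1S^{(1)}\times\cdots\times\pi_1S^{(r-1)}$ equals the projection of $w$, which is the chosen element. This proves surjectivity of $q$.

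The subdirectness of $K$ is then immediate: surjectivity onto the first $r-1$ factors in particular gives surjectivity onto each of $\pi_1S^{(1)},\dots,\pi_1S^{(r-1)}$, and surjectivity onto the last factor $\pi_1S^{(r)}$ follows by the symmetric argument (take a word supported on $X^{(r)}$ and correct its $\phi$-image using, say, the first factor, which is nonempty since $r\geq 2$). Alternatively one can simply note that the construction above is symmetric in the roles of the factors, so the same reasoning shows that the projection of $K$ to any single $\pi_1S^{(i)}$ is onto.

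The only point requiring a little care is that the compensating word $\tilde u$ must genuinely lie in the $r$-th factor and must map under $\phi$ to the inverse of $\phi(w)$; this is exactly where the hypothesis that each $\alpha_{i*}$ is surjective onto $\pi_1E$ (equivalently, that the branched covers $\alpha_i$ induce surjections on $\pi_1$, as recorded in Section~\ref{secDPS}) is used. I do not expect any real obstacle here: the surjectivity of each factor onto $\pi_1E$ makes the correction trivial, and the fact that distinct factors commute means the correction does not disturb the other coordinates. The statement is essentially a formal consequence of the shape of $\phi$, and the short proof should just spell out the preimage construction described above.
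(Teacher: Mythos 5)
Your argument is correct, but it takes a different route from the paper. You prove surjectivity of $q$ by a general ``correction'' argument: given any element of $\pi_1S^{(1)}\times\cdots\times\pi_1S^{(r-1)}$, represented by a word $w$ in $X^{(1)}\cup\cdots\cup X^{(r-1)}$, you multiply by a word $\tilde u$ in $X^{(r)}$ with $\phi(\tilde u)=\phi(w)^{-1}$ (possible since $\alpha_{r*}$ is onto $\pi_1E$), so that $w\tilde u\in K=\ker\phi$ and $q(w\tilde u)=q(w)$; subdirectness then follows by symmetry. This is sound, and it is essentially the standard argument showing that the kernel of a homomorphism from a direct product is subdirect as soon as each complementary factor surjects onto the image -- it uses nothing about $K$ beyond the shape of $\phi$ in \eqref{eqngphom}. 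The paper instead exploits the explicit generating set $\mathcal{K}$ of $K$ already established in Proposition \ref{propgenset}: it observes that $f_i^{(r)}\mapsto a_i^{(1)}$ and $g_i^{(r)}\mapsto b_i^{(1)}$ under $q$, whence $(f_i^{(k)})^{-1}f_i^{(r)}\mapsto a_i^{(k)}$ and $(g_i^{(k)})^{-1}g_i^{(r)}\mapsto b_i^{(k)}$, so the image of $q$ contains a generating set of $\pi_1S^{(1)}\times\cdots\times\pi_1S^{(r-1)}$, with symmetry again handling subdirectness. The paper's version is shorter once $\mathcal{K}$ is in hand and, more importantly, records explicit words in the generators of $K$ mapping onto the standard generators of the image, which is the form in which surjectivity is actually used in the algorithmic construction of the presentation in Section 4; your version is more self-contained and generalises immediately to any number of factors and any such $\phi$, but does not produce this explicit bookkeeping.
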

 
 \begin{proof}
  It suffices to prove the second part of the assertion, since $K$ is symmetric in the factors. To see that the projection $K\rightarrow \pi_1S^{(1)}\times \cdots \times \pi_1 S^{(r-1)}$ is surjective, observe that $f_i^{(r)}\mapsto a_i^{(1)}$ and $g_i^{(r)} \mapsto b_i^{(1)}$ under the projection map. Hence, we obtain that $(f_i^{(k)})^{-1}f_i^{(r)}\mapsto a_i^{(k)}$ and $(g_i^{(k)})^{-1} g_i^{(r)} \mapsto b_i^{(k)}$ under the projection map.
 \end{proof}

\section{Construction of a Presentation}
\label{secProofV1}
We will now follow the algorithms described in \cite[Theorem 3.7]{BriHowMilSho-13} and \cite[Theorem 2.2]{BriHowMilSho-13} in order to derive a finite presentation for $K$. We start by proving that we obtain the following presentation for $K$:
\begin{theorem}
\label{thmFinPresV1}
 Let $r\geq 3$. Then the group defined by the finite presentation
  {\small \[
\sbox0{$
  \begin{array}{l} 
  x_1,x_2, f_1^{(r)},f_2^{(r)},\\ 
  A=\left\{c_1^{(1)},c_2^{(1)},d, f_i^{(k)},g_i^{(k)},\right.\\
   k=2,\cdots,r-1,i=1,2\Bigr\} \\
  B=\left\{c_1^{(r)},c_2^{(r)}, \delta \right\}
  \end{array}
  \left|
  \begin{array}{l}
   x_i^{\eps}c_j^{(1)}x_i^{-\eps}(f_i^{(k)})^{\eps}(c_j^{(1)})^{-1}(f_i^{(k)})^{-\eps},~ x_i^{\eps}dx_i^{-\eps}(f_i^{(k)})^{\eps}d^{-1}(f_i^{(k)})^{-\eps},\\ \left[x_1,x_2\right]\cdot \delta \cdot d,\\
   \left[c_i^{(1)},g_{j}^{(k)}\right] \left[(c_j^{(1)})^{-1}f_j^{(k)},c_i^{(1)}\right],~ \left[f_i^{(k)},f_j^{(l)}\right]V_{i,j,1}^{-1},~\left[f_i^{(k)},g_j^{(l)}\right]W_{i,j,1}^{-1},\\  \left[c_i^{(1)}g_i^{(k)},c_j^{(1)}g_j^{(l)}\right] V_{i,j,1}^{-1},\\
   d^{-1}(c_1^{(1)})^{-1}f_1^{(k)}(c_2^{(1)})^{-1}(f_1^{(k)})^{-1}d^{-1}f_2^{(k)}c_1^{(1)} (f_2^{(k)})^{-1}c_2^{(1)},~ S^{(k)}\cdot T^{(k)},\\
   \delta ^{-2} x_2x_1\left[(x_1c_1^{(r)})^{-1},(x_2c_2^{(r)})^{-1}\right]x_2^{-1}x_1^{-1},~ \left[A,B\right],f_1^{(r)}=x_1,f_2^{(r)}=x_2\\ 
   i,j=1,2, \eps=\pm 1, k,l=2,\cdots, r-1, l\neq k
  \end{array}
  \right.
  $}
\mathopen{\resizebox{1.2\width}{\ht0}{$\Bigg\langle$}}
\usebox{0}
\mathclose{\resizebox{1.2\width}{\ht0}{$\Bigg\rangle$}}
\]}
is a K\"ahler group of finiteness type $\mathcal{F}_{r-1}$, but not of finiteness type $\mathcal{F}_r$, where $V_{i,j,\eps}(A)$, $W_{i,j,\eps}(A)$, $i,j=1,2,\eps=\pm 1$, $T^{(k)}$ and  $S^{(k)}$ are the words in the free group on $A$ defined above. 
\end{theorem}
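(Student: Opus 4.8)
The strategy is to identify the presented group with $K=\ker\phi$ and then to invoke what is already established. By Theorem \ref{thmDPSA} the group $K$ is projective, hence K\"ahler, of finiteness type $\mathcal{F}_{r-1}$ but not $\mathcal{F}_{r}$; so it suffices to prove that the group defined by the displayed presentation is isomorphic to $K$, and from there on the argument is purely combinatorial.

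First I would record the fibre-product structure of $K$. Writing $G=G'\times\pi_1 S^{(r)}$ with $G'=\pi_1 S^{(1)}\times\cdots\times\pi_1 S^{(r-1)}$, and $\phi(g',g_r)=\phi'(g')\phi_r(g_r)$ with $\phi'\colon G'\to\pi_1 E$ and $\phi_r\colon\pi_1 S^{(r)}\to\pi_1 E$ the restrictions of $\phi$, one has $K=\{(g',g_r)\mid \phi'(g')=\phi_r(g_r)^{-1}\}$, the fibre product of $\phi'$ and $\phi_r$. By Lemma \ref{lemProj} the projection $K\to G'$ is onto; by Proposition \ref{propgenset} (with $r-1$ in place of $r$) the subgroup $\ker\phi'\leq G'$ is finitely generated; $\pi_1 E$ is finitely presented; and $K$ is of type $\mathcal{F}_{r-1}$ by Theorem \ref{thmDPSA}. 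This is precisely the setting in which the algorithms of \cite{BauBriMilSho-00} and of \cite[Theorems~2.2 and~3.7]{BriHowMilSho-13} output an explicit finite presentation of the fibre product.

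The plan is then to feed two inputs into that machinery. The first is the presentation of $G'$ of the form $\langle\mathcal{X}\cup\mathcal{A}\mid S_1\cup S_2\cup S_3\rangle$ constructed in Section \ref{secPreLem}: namely Theorem \ref{thmFinPresA} with $r$ replaced by $r-1$, so $\mathcal{X}=\{x_1,x_2\}$ lifts $\{\mu_1,\mu_2\}$ along $\phi'$, $\mathcal{A}$ is the generating set of $\ker\phi'$ from Proposition \ref{propgenset} (with $k\leq r-1$), $S_1$, $S_2$, $S_3$ are the relators computed in Section \ref{secPreLem}, and $S_1$ may be replaced by the reduced set $S_1'$ of Lemma \ref{lemNoEps}. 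The second input is the presentation of $\pi_1 S^{(r)}$ of the shape of \cite[Remark~2.1(1)]{BriHowMilSho-13} furnished by Proposition \ref{propFinPresB}, with $x_i\mapsto(a_i^{(r)})^{-1}$, relator $[x_1,x_2]\delta$, and the single relator $S=\delta^{-2}x_2x_1[(x_1c_1^{(r)})^{-1},(x_2c_2^{(r)})^{-1}]x_2^{-1}x_1^{-1}$, a word in the $F(x_1,x_2)$-conjugates of $\{c_1^{(r)},c_2^{(r)},\delta\}$. The algorithm then returns a presentation of $K$ on $\mathcal{X}\cup\mathcal{A}\cup\{c_1^{(r)},c_2^{(r)},\delta\}$ whose relators are: the relators of $S_1'$ (conjugation of $\mathcal{A}$ by the lifts $x_i$); the relators $S_3$; the relator obtained by splicing $S_2=\{[x_1,x_2]d\}$ with $[x_1,x_2]\delta$, namely $[x_1,x_2]\delta d$ ($d$ and $\delta$ commute, lying in different direct factors of $G$); the relator $S$; and the commutators $[\mathcal{A},\{c_1^{(r)},c_2^{(r)},\delta\}]$, reflecting that $\mathcal{A}\subset G'\times 1$ and $\{c_1^{(r)},c_2^{(r)},\delta\}\subset 1\times\pi_1 S^{(r)}$ commute. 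Finally, since under the projection of Lemma \ref{lemProj} the element $f_i^{(r)}=a_i^{(1)}(a_i^{(r)})^{-1}\in K$ is a lift of $x_i$, adjoining the generators $f_1^{(r)},f_2^{(r)}$ and the relators $f_i^{(r)}=x_i$ is an elementary Tietze expansion that puts the presentation into the displayed form.

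The main labour, and the step I expect to need real care, is checking that the relator set returned by the algorithm agrees --- up to the elementary Tietze moves already made in Section \ref{secPreLem} and the removal of redundant relators --- with the list in the statement: one has to verify that the commutators $[\mathcal{A},\{c_1^{(r)},c_2^{(r)},\delta\}]$ together with the relators of $S_1'$ already kill all commutators of $\mathcal{A}$ with the $F(x_1,x_2)$-conjugates of $\{c_1^{(r)},c_2^{(r)},\delta\}$, and one has to keep track of how the relators involving the pair $x_i=f_i^{(r)}$ distribute across the listed relator families. The two Tietze directions --- the surjection onto $K$ and its inverse --- are then verified exactly as in the proof of Theorem \ref{thmFinPresA}, by running through Lemmas \ref{lemCommRel1} and \ref{lemCommRel2}, and the K\"ahler and finiteness properties follow from those of $K$ via Theorem \ref{thmDPSA}.
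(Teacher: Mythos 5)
Your route is the paper's own: decompose $G=G'\times\pi_1S^{(r)}$, view $K$ as the fibre product of the two projections to $Q=\pi_1E$, feed the presentation of $\Gamma_2=G'$ from Theorem \ref{thmFinPresA} (with $r-1$ in place of $r$, reduced by Lemma \ref{lemNoEps}) and the presentation of $\Gamma_1=\pi_1S^{(r)}$ from Proposition \ref{propFinPresB} into the effective asymmetric 1-2-3 machinery of \cite{BriHowMilSho-13}, and finish with the Tietze expansion $f_i^{(r)}=x_i$; the K\"ahler and finiteness claims are quoted from Theorem \ref{thmDPSA} exactly as in the paper. However, there is one genuine gap: you assert that ``the algorithm then returns a presentation of $K$ \ldots whose relators are'' precisely the listed ones, and this completeness is exactly what needs justification. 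The output of the algorithm in \cite[Theorem 2.2]{BriHowMilSho-13} is a presentation of $H/(N_A\cap N_B)$, where $H$ is the group presented by the relators you list, and in general the algorithm must adjoin a further finite set $Z$ of relators (words in $A$) built from a finite set $M$ of identity sequences generating $\pi_2$ of the chosen presentation of $Q$ as a $\ZZ Q$-module. The step your argument is missing is the observation that for $\mathcal{Q}=\left\langle x_1,x_2\mid\left[x_1,x_2\right]\right\rangle$ one has $\pi_2\mathcal{Q}=1$ (the presentation complex is the torus), so one may take $M=\emptyset$, hence $Z=\emptyset$ and $N_A\cap N_B=1$, giving $K\cong H$. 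Without this, verifying that the listed relators hold in $K$ only produces a surjection from the presented group onto $K$, not an isomorphism.

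A smaller remark: the checking you single out as the ``main labour'' --- that $\left[A,B\right]$ together with $S_1'$ kills all commutators of $A$ with $F(x_1,x_2)$-conjugates of $B$ --- is not actually required by the algorithm. In the class $\mathcal{C}(\mathcal{Q})$ of candidate presentations, conjugates of $B$ are permitted only inside the single splice relator (the word $v(B^*)$ in $S_1$), while $S_3$ consists of the plain commutators $\left[a,b\right]$, $a\in A$, $b\in B$; so the relator list you wrote down is of the required shape as it stands. The genuine verification burden sits instead in checking that the candidate presentation has $H_A\cong\Gamma_1$ and $H_B\cong\Gamma_2$ compatibly with the maps to $Q$ (which is what Theorem \ref{thmFinPresA}, Proposition \ref{propFinPresB} and Lemma \ref{lemNoEps} provide), plus the $\pi_2\mathcal{Q}=1$ step above.
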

For $r=3$ we define
\[
 \mathcal{X}^{(3)}=
 \left\{ x_1,x_2,A,B\right\}
\]
\[
 \mathcal{R}_1^{(3)}=\left\{\ 
 \begin{array}{l}                             
   x_i^{\eps}c_j^{(1)}x_i^{-\eps}(f_i^{(2)})^{\eps}(c_j^{(1)})^{-1}(f_i^{(2)})^{-\eps},~ x_i^{\eps}dx_i^{-\eps}(f_i^{(2)})^{\eps}d^{-1}(f_i^{(2)})^{-\eps}, \left[x_1,x_2\right]\cdot \delta \cdot d,\\
   \left[c_i^{(1)},g_{j}^{(2)}\right] \left[(c_j^{(1)})^{-1}f_j^{(2)},c_i^{(1)}\right],~ \left[f_i^{(3)},f_j^{(2)}\right]V_{i,j,1}^{-1},~\left[f_i^{(3)},g_j^{(2)}\right]W_{i,j,1}^{-1},\\ \left[A,B\right], i,j=1,2,~ \eps= \pm 1
  \end{array}
   \right\}
\]
\[
 \mathcal{R}_2^{(3)}=\left\{\ 
 \begin{array}{l}                             
   d^{-1}(c_1^{(1)})^{-1}f_1^{(2)}(c_2^{(1)})^{-1}(f_1^{(2)})^{-1}d^{-1}f_2^{(2)}c_1^{(1)} (f_2^{(2)})^{-1}c_2^{(1)}, S^{(2)}\cdot T^{(2)},\\
   \delta ^{-2} x_2x_1\left[(x_1c_1^{(3)})^{-1},(x_2c_2^{(3)})^{-1}\right]x_2^{-1}x_1^{-1}
  \end{array}
   \right\}
\]

\begin{proof}[Proof of Theorem \ref{thmFinPresV1}]
By Proposition \ref{propgenset} the set $\mathcal{K}=\left\{c_1^{(1)},c_2^{(1)},d,f_i^{(k)},g_i^{(k)}, i=1,2,k=1,\cdots,r\right\}$ is a generating set of $K$ where $c_i^{(1)}=a_i^{(1)}(b_i^{(1)})^{-1}$, $d=\left[ b_1^{(1)},b_2^{(1)} \right]$, $f_i^{(k)}=a_i^{(1)} (a_i^{(k)})^{-1}$ and $g_i^{(k)}=b_i^{(1)}(b_i^{(k)})^{-1}$. By Lemma \ref{lemProj} the projection $p_{ij}(\mathcal{K})$ to the group $\pi_1S^{(i)}\times \pi_1S^{(j)}$ is surjective, since $r\geq 3$ by assumption. 

Hence, Theorem 3.7 in \cite{BriHowMilSho-13} provides us with an algorithm that will output a finite presentation of $K$. Since by Lemma \ref{lemProj} the projection $\Gamma_2= q(K)= \pi_1 S^{(1)}\times \cdots \times \pi_1 S^{(r-1)}$ is surjective, Theorem \ref{thmFinPresA} provides us with a finite presentation for $\Gamma_2$ given by
{\small 
\begin{equation}
\sbox0{$
   \begin{array}{l} 
  x_1,x_2, c_1^{(1)},c_2^{(1)},d,\\ 
  f_i^{(k)},g_i^{(k)},\\
   k=2,\cdots,r-1,\\i=1,2
  \end{array}
  \left|
  \begin{array}{l}
   \left[x_i,f_j^{(k)}\right]V_{i,j,1}^{-1},~ \left[x_i,g_j^{(k)}\right]W_{i,j,1}^{-1},~ x_i^{\eps}c_j^{(1)}x_i^{-\eps}(f_i^{(k)})^{\eps}(c_j^{(1)})^{-1}(f_i^{(k)})^{-\eps},\\ x_i^{\eps}dx_i^{-\eps}(f_i^{(k)})^{\eps}d^{-1}(f_i^{(k)})^{-\eps},
   \left[x_1,x_2\right] \cdot d,\\
 \left[c_i^{(1)},g_{j}^{(k)}\right] \left[(c_j^{(1)})^{-1}f_j^{(k)},c_i^{(1)}\right],~ \left[f_i^{(k)},f_j^{(l)}\right]V_{i,j,1}^{-1},~\left[f_i^{(k)},g_j^{(l)}\right]W_{i,j,1}^{-1},\\  \left[c_i^{(1)}g_i^{(k)},c_j^{(1)}g_j^{(l)}\right] V_{i,j,1}^{-1},\\
   d^{-1}(c_1^{(1)})^{-1}f_1^{(k)}(c_2^{(1)})^{-1}(f_1^{(k)})^{-1}d^{-1}f_2^{(k)}c_1^{(1)} (f_2^{(k)})^{-1}c_2^{(1)},~ S^{(k)}\cdot T^{(k)},\\
   i,j=1,2, \eps=\pm 1, k,l=2,\cdots, r, l\neq k
  \end{array}
  \right.
  $}
\mathopen{\resizebox{1.2\width}{\ht0}{$\Bigg\langle$}}
\usebox{0}
\mathclose{\resizebox{1.2\width}{\ht0}{$\Bigg\rangle$}}
\label{eqnPresGamma2}
\end{equation}
}

For $\Gamma_1 =\pi_1S^{(r)}$ we choose the finite presentation derived in Proposition \ref{propFinPresB}
  \begin{equation}
   \Gamma_1 \cong \left\langle x_1,x_2,c_1^{(r)},c_2^{(r)},\delta \mid \left[x_1,x_2\right]\delta, \delta ^{-2} x_2x_1\left[(x_1c_1^{(r)})^{-1},(x_2c_2^{(r)})^{-1}\right]x_2^{-1}x_1^{-1} \right\rangle
  \label{eqnPresGamma1}
  \end{equation}
and for $Q=\Gamma_1/(\Gamma_1\cap K)\cong \pi_1 E$ we choose the finite presentation
\[
 Q\cong \left\langle x_1,x_2\mid \left[x_1,x_2\right]\right\rangle.
\]

We further introduce the notation
\[
 \mathcal{X}=\left\{x_1,x_2\right\},
\]
\[
 \mathcal{A} =\left\{c_1^{(1)},c_2^{(1)},d, f_i^{(k)},g_i^{(k)},k=2,\cdots,r-1,i=1,2\right\},
\]
\[
 \mathcal{B}=\left\{c_1^{(r)},c_2^{(r)},\delta\right\}.
\]

Then the canonical projections $f_1: \Gamma_1 \rightarrow Q$ and $f_2: \Gamma_2 \rightarrow Q$ are given by the identity on $\mathcal{X}$ and by mapping $\mathcal{A}$ and $\mathcal{B}$ to $1$. Note further that the presentation $\mathcal{Q}=\left\langle x_1,x_2\mid \left[x_1,x_2\right]\right\rangle$ of $Q$ satisfies $\pi_2\mathcal{Q}=1$.

By construction the group $K$ in Section \ref{secDPS} is then isomorphic to the fibre product of the projections $f_1:\Gamma_1\rightarrow Q$ and $f_2: \Gamma_2 \rightarrow Q$.

It follows that it suffices to apply the algorithm described in the proof of the Effective Asymmetric 1-2-3 Theorem \cite[Theorem 2.2]{BriHowMilSho-13} by Bridson, Howie, Miller and Short in order to obtain a finite presentation of $K$. We will follow their notation as far as possible. For a more detailed explanation of the following steps we recommend the reader to have a look at the original source \cite{BriHowMilSho-13}.

The first step of the algorithm considers the recursively enumerable class $\mathcal{C}(\mathcal{Q})$ of presentations of the form
\[
 \left\langle \mathcal{X}\cup A \cup B \mid S_1,S_2,S_3,S_4,S_5\right\rangle,
\]
with
\begin{itemize}
 \item $S_1$ consists of a single relation of the form $\left[x_1,x_2\right]u(A)v(B^*)$, where $u(A)$ is a word in the free group on the letters of $A$ and $v(B^*)$ is a word in the free group on the letters of $B^*$, the set of all formal conjugates of letters in $B$ by elements in the free group on $\mathcal{X}$,
 \item $S_2$ consists of a relator $x^{\eps}ax^{-\eps}\omega_{a,x,\eps}$ for every $a\in A$, $x\in X$ and $\eps = \pm 1$ with $\omega_{a,x,\eps}(A)$ a word in the free group on $A$,
 \item $S_3=\left\{ aba^{-1}b^{-1}\mid a\in A, b\in B\right\}$,
 \item $S_4$ is a finite set of words in the free group on $A$,
 \item $S_5$ is a finite set of words in the free group on $B$.
\end{itemize}

Denote by $H$ the group corresponding to this presentation, by $N_A$, respectively $N_B$, the normal closure of $A$, respectively $B$, in $H$ and by $H_A=H/N_A$, respectively $H_B=H/N_B$, the corresponding quotients. Note in particular that there is a canonical isomorphism $Q\cong H/(N_A\cdot N_B)$ and hence there are canonical quotient maps $\pi_A: H_A\mapsto Q$ and $\pi_B: H_B\mapsto Q$.

The algorithm runs through presentations in the class $\mathcal{C}(\mathcal{Q})$ and stops when it finds a presentation such that there are isomorphisms $\phi_A: H_A\rightarrow \Gamma_1$ and $\phi_B: H_B\rightarrow \Gamma_2$ with the property that $f_1\circ \phi_A=\pi_A$ and $f_2\circ \phi _B=\pi_B$.

By construction of the presentations \eqref{eqnPresGamma2} and \eqref{eqnPresGamma1} and Lemma \ref{lemNoEps} it follows that the presentation
 {\small \begin{equation}
\sbox0{$
  \begin{array}{l} 
  x_1,x_2,\\ 
  \mathcal{A}=\left\{c_1^{(1)},c_2^{(1)},d, f_i^{(k)},g_i^{(k)},\right.\\
   k=2,\cdots,r-1,i=1,2\Bigr\} \\
  \mathcal{B}=\left\{c_1^{(r)},c_2^{(r)}, \delta \right\}
  \end{array}
  \left|
  \begin{array}{l}
   \left[x_i^{\eps},f_j^{(l)}\right]V_{i,j,\eps}^{-1},~\left[x_i^{\eps},g_j^{(l)}\right]W_{i,j,\eps}^{-1},~ x_i^{\eps}c_j^{(1)}x_i^{-\eps}(f_i^{(k)})^{\eps}(c_j^{(1)})^{-1}(f_i^{(k)})^{-\eps},\\
    x_i^{\eps}dx_i^{-\eps}(f_i^{(k)})^{\eps}d^{-1}(f_i^{(k)})^{-\eps},\left[x_1,x_2\right]\cdot \delta \cdot d,\\
   \left[c_i^{(1)},g_{j}^{(k)}\right] \left[(c_j^{(1)})^{-1}f_j^{(k)},c_i^{(1)}\right],~ \left[f_i^{(k)},f_j^{(l)}\right]V_{i,j,1}^{-1},~\left[f_i^{(k)},g_j^{(l)}\right]W_{i,j,1}^{-1},\\  \left[c_i^{(1)}g_i^{(k)},c_j^{(1)}g_j^{(l)}\right] V_{i,j,1}^{-1},\\
   d^{-1}(c_1^{(1)})^{-1}f_1^{(k)}(c_2^{(1)})^{-1}(f_1^{(k)})^{-1}d^{-1}f_2^{(k)}c_1^{(1)} (f_2^{(k)})^{-1}c_2^{(1)},~ S^{(m)}\cdot T^{(m)},\\
   \delta ^{-2} x_2x_1\left[(x_1c_1^{(r)})^{-1},(x_2c_2^{(r)})^{-1}\right]x_2^{-1}x_1^{-1},~ \left[A,B\right]\\ 
   i,j=1,2, \eps=\pm 1, k=2,\cdots, r, l,m=2,\cdots, r-1, l,m\neq k
  \end{array}
  \right.
  $}
\mathopen{\resizebox{1.2\width}{\ht0}{$\Bigg\langle$}}
\usebox{0}
\mathclose{\resizebox{1.2\width}{\ht0}{$\Bigg\rangle$}}
\label{eqnPresKNo1}
\end{equation}}
is of this form with $f_1$ and $f_2$ defined by the identity maps on $\mathcal{X}$, $\mathcal{A}$ and $\mathcal{B}$. Again we denote by $H$ the group corresponding to this presentation.

The second part of the algorithm derives a finite generating set $Z$ for the $\ZZ Q$-module  $N_A\cap N_B$. The set $Z$ is obtained from an arbitrary choice, but fixed, finite choice of identity sequences $M$ that generate $\pi_2 \mathcal{Q}$ as a $\ZZ Q$-module. In particular the elements of $Z$ are in one-to-one correspondence with the elements of $M$. The details of the construction of the elements of $M$ from the elements of $Z$ can be found in the proof of \cite[Theorem 1.2]{BauBriMilSho-00}.

In our case we have $\pi_2 \mathcal{Q}=1$. Hence, we can choose $M=\emptyset$ implying that $Z=\emptyset$ and in particular $N_A\cap N_B= 1$. But the algorithm tells us that $K\cong H/(N_A\cap N_B) = H$. Thus, the algorithm shows that the presentation \eqref{eqnPresKNo1} is indeed a finite presentation for $K$ and by construction the isomorphism between $K$ and $H$ is induced by the map
\[
\begin{split}
 x_i& \mapsto f_i^{(r)}\\
 c_i^{(1)}&\mapsto c_i^{(1)}\\
 d &\mapsto d\\
 f_i^{(k)}&\mapsto f_i^{(k)}\\
 g_i^{(k)}&\mapsto g_i^{(k)}\\
 c_i^{(r)}&\mapsto (f_i^{(r)})^{-1}c_i^{(1)}g_i^{(r)}\\
 \delta &\mapsto \left[f_2^{(r)},f_1^{(r)}\right]d^{-1}\\
 \end{split}
\]
on generating sets.

Applying Lemma \ref{lemNoEps} in order to reduce the relations of the form $S_2$ to a subset of $S_2$ and introducing the generators $f_i^{(r)}=x_i$ completes the proof.

\end{proof}
\section{Proof of Theorem \ref{thmFinPres}}
\label{secProof}
We will now show how Theorem \ref{thmFinPres} follows from Theorem \ref{thmFinPresV1}. The proof follows easily from the following three lemmas whose proofs we will give at the end of this section.

\begin{lemma}
 Applying Tietze transformations to the presentation in Theorem \ref{thmFinPresV1}, we can replace the set of relations
 \[
  \left\{\begin{array}{l}
  \left[c_i^{(r)},c_j^{(1)}\right],\left[c_i^{(r)},f_j^{(k)}\right],\left[c_i^{(r)},g_j^{(k)}\right],\\
  \left[\delta,d\right], i=1,2,k=2,\cdots,r-1
  \end{array}
  \right\}\subseteq \left[A,B\right]
 \]
 by the set of relations
 \[
  \left\{\begin{array}{l}
  \left[c_i^{(1)},g_j^{(r)}\right]\left[(c_j^{(1)})^{-1}f_j^{(r)},c_i^{(1)}\right],\left[f_i^{(k)},g_j^{(r)}\right]W_{i,j,1}^{-1},\left[c_i^{(1)}g_i^{(r)},c_j^{(1)}g_j^{(k)}\right]V_{i,j,1}^{-1},\\
  \left[\left[x_2,x_1\right],d\right] i,j=1,2,k=2,\cdots,r-1
  \end{array}\right\}
 \]
 under the identifications $x_i=f_i^{(r)}$, $g_i^{(r)}=(c_i^{(1)})^{-1}f_i^{(r)}c_i^{(r)}$ and $\delta = \left[x_2,x_1\right]d^{-1}$.
 \label{lemProofMT1}
\end{lemma}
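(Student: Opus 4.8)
\textbf{Proof proposal for Lemma \ref{lemProofMT1}.}

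The plan is to perform the Tietze transformations in two phases: first use the new relations to introduce/eliminate $g_i^{(r)}$ and $\delta$, then verify that the discarded commutator relations $[c_i^{(r)},c_j^{(1)}]$, $[c_i^{(r)},f_j^{(k)}]$, $[c_i^{(r)},g_j^{(k)}]$, $[\delta,d]$ become consequences of the remaining relations under the substitutions $x_i=f_i^{(r)}$, $g_i^{(r)}=(c_i^{(1)})^{-1}f_i^{(r)}c_i^{(r)}$, $\delta=[x_2,x_1]d^{-1}$. First I would recall that in the presentation of Theorem \ref{thmFinPresV1} the generators $f_i^{(r)}$ are already identified with $x_i$ via the relations $f_i^{(r)}=x_i$, so introducing $g_i^{(r)}$ as a new generator with defining relation $g_i^{(r)}=(c_i^{(1)})^{-1}f_i^{(r)}c_i^{(r)}$ is a legitimate Tietze move; it lets us solve for $c_i^{(r)}=(f_i^{(r)})^{-1}c_i^{(1)}g_i^{(r)}$ and eliminate $c_i^{(r)}$. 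The key point is that this is precisely the substitution used in the proof of Theorem \ref{thmFinPresV1} to define the isomorphism $K\cong H$, so the new generators $g_i^{(r)}$ really do correspond to $b_i^{(1)}(b_i^{(r)})^{-1}$ inside $K$, and similarly $\delta$ corresponds to $[(a_2^{(r)})^{-1},(a_1^{(r)})^{-1}]=[f_2^{(r)},f_1^{(r)}]d^{-1}=[x_2,x_1]d^{-1}$; this last equality is exactly the relation $[x_1,x_2]\cdot\delta\cdot d$ already present, so eliminating $\delta$ is immediate.

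Next I would rewrite each of the four families of relations in $[A,B]$ under these substitutions. For $[c_i^{(r)},c_j^{(1)}]$: substituting $c_i^{(r)}=(f_i^{(r)})^{-1}c_i^{(1)}g_i^{(r)}$ and using the relations $x_i^{\eps}c_j^{(1)}x_i^{-\eps}(f_i^{(k)})^{\eps}(c_j^{(1)})^{-1}(f_i^{(k)})^{-\eps}$ (which, with $k=r$, say how $x_i=f_i^{(r)}$ conjugates $c_j^{(1)}$) together with Lemma \ref{lemCommRel2}(1) identifying $[c_i^{(1)},g_j^{(r)}][(c_j^{(1)})^{-1}f_j^{(r)},c_i^{(1)}]$ with $[b_i^{(1)},b_j^{(r)}]$ — equivalently $[b_i^{(1)},b_j^{(1)}$-type conjugates — one finds $[c_i^{(r)},c_j^{(1)}]$ is a product of conjugates of the new relation $[c_i^{(1)},g_j^{(r)}][(c_j^{(1)})^{-1}f_j^{(r)},c_i^{(1)}]$ modulo $S_1$-type relations. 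Analogously, $[c_i^{(r)},f_j^{(k)}]$ and $[c_i^{(r)},g_j^{(k)}]$ unwind via $[f_i^{(k)},g_j^{(r)}]W_{i,j,1}^{-1}$ and $[c_i^{(1)}g_i^{(r)},c_j^{(1)}g_j^{(k)}]V_{i,j,1}^{-1}$ — the point being that $c_i^{(r)}$ is expressed through $g_i^{(r)}$ and $c_i^{(1)}$, and the cross-factor commutators of $g_i^{(r)}$ with the generators of the first $r-1$ factors are governed exactly by the analogues of Lemma \ref{lemCommRel2}(3),(4) with $l=r$. Finally, $[\delta,d]$ becomes $[[x_2,x_1]d^{-1},d]=[[x_2,x_1],d]$ after noting $d$ commutes with $d$, and $[[x_2,x_1],d]$ is the stated replacement relation. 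Conversely, each new relation is a consequence of the old $[A,B]$ together with $S_1,S_2$ by reversing these computations, so the two relation sets are interchangeable by Tietze transformations.

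The main obstacle I expect is bookkeeping: one must be careful that the relations $x_i^{\eps}c_j^{(1)}x_i^{-\eps}(f_i^{(k)})^{\eps}(c_j^{(1)})^{-1}(f_i^{(k)})^{-\eps}$ and $x_i^{\eps}dx_i^{-\eps}(f_i^{(k)})^{\eps}d^{-1}(f_i^{(k)})^{-\eps}$ are only listed for $k=2,\dots,r-1$ in the presentation, so one cannot literally invoke them with $k=r$; instead one uses $f_i^{(r)}=x_i$ so that these conjugation relations with a superscript $r$ degenerate to trivialities, and the genuine content is carried by the new relations themselves. A second delicate point is that $V_{i,j,1}$ and $W_{i,j,1}$ are words in $A$ only (they were defined with $k=2$), so when a commutator involving a superscript-$r$ generator is rewritten one genuinely produces the word $V_{i,j,1}^{-1}$ (resp. $W_{i,j,1}^{-1}$) and not some $r$-dependent variant — this is consistent precisely because, by Lemma \ref{lemVW}, the relevant commutators $[(a_i^{(1)})^{\eps},a_j^{(1)}]$ and $[(a_i^{(1)})^{\eps},b_j^{(1)}]$ are independent of $k$. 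Once these two compatibility checks are in place, the rest is a routine (if lengthy) verification that each commutator in the old set and each relation in the new set lie in the normal closure of the other set together with the surviving relations.
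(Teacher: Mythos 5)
Your proposal is correct and takes essentially the same route as the paper: substitute $c_i^{(r)}=(f_i^{(r)})^{-1}c_i^{(1)}g_i^{(r)}$, $x_i=f_i^{(r)}$, $\delta=\left[x_2,x_1\right]d^{-1}$ and check, relation by relation, that each discarded commutator $\left[c_i^{(r)},c_j^{(1)}\right]$, $\left[c_i^{(r)},f_j^{(k)}\right]$, $\left[c_i^{(r)},g_j^{(k)}\right]$, $\left[\delta,d\right]$ is equivalent, modulo the remaining relations, to the corresponding new relation --- exactly the pairing the paper verifies by explicit commutator computations. The only difference is that you defer these verifications as routine (the paper writes them out, noting for instance that the equivalence for $\left[c_i^{(1)}g_i^{(r)},c_j^{(1)}g_j^{(k)}\right]V_{i,j,1}^{-1}$ uses the already-established case of $\left[c_i^{(r)},c_j^{(1)}\right]$), and the compatibility points you flag ($f_i^{(r)}=x_i$ and the $k$-independence of $V_{i,j,1},W_{i,j,1}$ from Lemma \ref{lemVW}) are indeed the ones that make the bookkeeping work.
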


Denote by $\mathcal{M}$ the subset of the set of relations of the presentation for $K$ in Theorem \ref{thmFinPresV1} defined by 
 \[
  \mathcal{M}=\left\{\begin{array}{l}
     \left[d,c_i^{(r)}\right],\left[\delta,c_i^{(1)}\right],\left[\delta,f_i^{(k)}\right],\left[\delta,g_i^{(k)}\right],\\ i=1,2,k=1,\cdots,r-1
  \end{array}\right\},
 \]
and denote by $\mathcal{M}^C$ its complement in the set of all relations in the presentation for $K$ given in Theorem \ref{thmFinPresV1}.

\begin{lemma}
For $r\geq 4$, all elements of $\mathcal{M}$ can be expressed as product of conjugates of relations in its complement $\mathcal{M}^C$. Therefore we can remove the set $\mathcal{M}$ from the set of relations of $K$ using Tietze transformations.
 \label{lemProofMT2}
\end{lemma}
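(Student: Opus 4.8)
The plan is to show that each relation in $\mathcal{M}$ is a consequence of the relations in $\mathcal{M}^C$ by exhibiting it explicitly as a product of conjugates of $\mathcal{M}^C$-relations, working entirely inside the free group $F(\mathcal{X}^{(r)})$ modulo $\mathcal{M}^C$. The key observation is that $\mathcal{M}$ consists of commutators saying that $\delta$ is central with respect to $A = \{c_i^{(1)}, d, f_i^{(k)}, g_i^{(k)}\}$ (for $k = 2,\dots,r-1$) together with $[d, c_i^{(r)}]$. Under the substitution $\delta = [x_2,x_1]d^{-1}$ coming from the relation $[x_1,x_2]\delta d$ (equivalently $[x_2,x_1] = d\delta$, so $\delta = d^{-1}[x_2,x_1]$ up to rearrangement), every relation of the form $[\delta, w]$ with $w \in A$ becomes a word in $x_1, x_2, d, w$; one then rewrites $[[x_2,x_1]d^{-1}, w]$ using the relations in $S_1$-type form already present, namely $x_i^\eps c_j^{(1)} x_i^{-\eps}(f_i^{(k)})^\eps (c_j^{(1)})^{-1}(f_i^{(k)})^{-\eps}$ and $x_i^\eps d x_i^{-\eps}(f_i^{(k)})^\eps d^{-1}(f_i^{(k)})^{-\eps}$, which let one move $x_i$ past $c_j^{(1)}$ and $d$ at the cost of introducing $f_i^{(k)}$'s, and then $\left[A,B\right]$ and the commutator relations of Lemma \ref{lemCommRel2} to clean up.

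The first step I would carry out is to handle $[\delta, d]$: after substituting $\delta = [x_2,x_1]d^{-1}$ this becomes $[[x_2,x_1], d]$ up to conjugacy, and $[[x_2,x_1],d] = [[f_1^{(r)}, f_2^{(r)}], d]^{\pm 1}$-type relations are precisely the ones produced in Lemma \ref{lemProofMT1} (the relation $[[x_2,x_1],d]$ appears there explicitly), so this is immediate from the conclusion of that lemma — which is why the present lemma is stated \emph{after} it and may be assumed. Next I would treat $[\delta, c_i^{(1)}]$ and $[\delta, f_i^{(k)}]$: substitute $\delta = [x_2,x_1]d^{-1}$, expand the commutator with $c_i^{(1)}$ (resp.\ $f_i^{(k)}$), and use the relations $x_i^{\pm 1} c_j^{(1)} x_i^{\mp 1} = (f_i^{(k)})^{\pm 1} c_j^{(1)} (f_i^{(k)})^{\mp 1}$, $x_i^{\pm 1} d x_i^{\mp 1} = (f_i^{(k)})^{\pm 1} d (f_i^{(k)})^{\mp 1}$, and $[f_i^{(k)}, f_j^{(l)}]V_{i,j,1}^{-1}$ to collapse everything; since $d = [b_1^{(1)},b_2^{(1)}]$ genuinely commutes with $f_i^{(k)}$ in $G$ this must close up, and the point is only to verify it closes up \emph{using only $\mathcal{M}^C$}. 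The relations $[\delta, g_i^{(k)}]$ are handled the same way, additionally invoking $W_{i,j,1}$ and the relation $[c_i^{(1)}, g_j^{(k)}][(c_j^{(1)})^{-1}f_j^{(k)}, c_i^{(1)}]$. Finally $[d, c_i^{(r)}]$: here I would use the identification $c_i^{(r)} = (f_i^{(r)})^{-1} c_i^{(1)} g_i^{(r)} = x_i^{-1} c_i^{(1)} g_i^{(r)}$ together with the relations $x_i^{\pm 1} d x_i^{\mp 1} = (f_i^{(k)})^{\pm 1} d (f_i^{(k)})^{\mp 1}$ and the commutator relations involving $d$, $c_i^{(1)}$, $g_i^{(r)}$ coming from Lemma \ref{lemCommRel2}(1) and Lemma \ref{lemProofMT1}.

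The hypothesis $r \geq 4$ is needed precisely because the argument for $[\delta, f_i^{(k)}]$ and $[\delta, g_i^{(k)}]$ uses a relation of type $[f_i^{(k)}, f_j^{(l)}]V_{i,j,1}^{-1}$ or the $k$-th copies of the $A$-relations for \emph{some} index $l \neq k$ with $l \in \{2,\dots,r-1\}$, i.e.\ one needs at least two "middle" indices; when $r = 3$ there is only one middle index and the simplification does not go through, which is why Theorem \ref{thmFinPres} treats $r=3$ separately via Theorem \ref{thmFinPresV1}. The main obstacle I anticipate is purely bookkeeping: organizing the conjugating elements so that the cancellations are manifestly products of $\mathcal{M}^C$-conjugates rather than merely equalities in $G$ — in other words, making sure no relation from $\mathcal{M}$ is secretly used while reducing another. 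I would address this by fixing, once and for all, a rewriting order (first eliminate $\delta$ via $S_1$, then push all $x_i$ to the outside using the normal-form relations \eqref{eqnnorm1}, \eqref{eqnnorm2}, then apply Lemma \ref{lemCommRel2}), and checking that at each stage the only relations invoked lie in $\mathcal{M}^C$, exactly as in the proof of Lemma \ref{lemNoEps} where a similar "a proper subset suffices" argument was carried out.
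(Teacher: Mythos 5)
Your plan for the relations $[\delta,c_i^{(1)}]$, $[\delta,f_i^{(k)}]$, $[\delta,g_i^{(k)}]$ has the same general shape as the paper's argument (substitute $\delta=[x_2,x_1]d^{-1}$, rewrite $d^{\pm1}$ via the $V$-relations, and push letters through using the conjugation relations $x_i^{\eps}c_j^{(1)}x_i^{-\eps}=(f_i^{(k)})^{\eps}c_j^{(1)}(f_i^{(k)})^{-\eps}$, $x_i^{\eps}dx_i^{-\eps}=(f_i^{(k)})^{\eps}d(f_i^{(k)})^{-\eps}$), but you never carry out the derivations, and you justify deferring them by saying the identities ``must close up'' because they hold in $G$. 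That reasoning is exactly what cannot be used here: every element of $\mathcal{M}$ holds in $K$, and the entire content of the lemma is derivability from the proper subset $\mathcal{M}^C$; the same ``it holds, so it's bookkeeping'' argument would apply verbatim to $r=3$, where the reduction is not claimed (which is why Theorem \ref{thmFinPres} treats $r=3$ separately). Your supporting claim is also false as stated: $d=[b_1^{(1)},b_2^{(1)}]$ does \emph{not} commute with $f_i^{(k)}$ in $G$; what commutes is $\delta$, which corresponds to $[(a_2^{(r)})^{-1},(a_1^{(r)})^{-1}]$ in the $r$-th factor. Moreover your ``first step'' treats $[\delta,d]$, which is not in $\mathcal{M}$ at all --- it belongs to $\mathcal{M}^C$ and may simply be used.

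The concrete gap is the relation $[d,c_i^{(r)}]$, which is precisely where the paper needs $r\geq 4$ and where your sketch has no workable idea. The only relations of $\mathcal{M}^C$ involving $c_i^{(r)}$ are its commutation with $c_j^{(1)}$, $f_j^{(k)}$, $g_j^{(k)}$ for $k\leq r-1$ (plus the single surface-type relation in the $B$-letters), so one is forced to express $d$ as a word in exactly those $A$-letters; the paper does this by writing $d^{\pm1}=[c_1^{(1)}g_1^{(k)},c_2^{(1)}g_2^{(l)}]$ with $2\leq k\neq l\leq r-1$, and the need for two \emph{distinct} middle indices is the sole reason for the hypothesis $r\geq4$ in this step. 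Your proposed route --- substituting $c_i^{(r)}=x_i^{-1}c_i^{(1)}g_i^{(r)}$ and invoking Lemma \ref{lemCommRel2}(1) and Lemma \ref{lemProofMT1} --- does not reduce $[d,c_i^{(r)}]$ to $\mathcal{M}^C$: $g_i^{(r)}$ is only shorthand for $(c_i^{(1)})^{-1}f_i^{(r)}c_i^{(r)}$, the replacement relations of Lemma \ref{lemProofMT1} say nothing about $d$ commuting with $B$-letters beyond $[\delta,d]$, and neither $[c_i^{(r)},x_j]$ nor $[c_1^{(r)},c_2^{(r)}]$ is available, so after the substitution you are stuck with exactly the commutators you are not allowed to use. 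Relatedly, you locate the need for $r\geq4$ in the $[\delta,f_i^{(k)}]$ and $[\delta,g_i^{(k)}]$ cases; in the paper the $[\delta,f_i^{(k)}]$ computation uses no second middle index at all, whereas two distinct indices $k\neq l\leq r-1$ are what make $[d,c_i^{(r)}]$ (and the steps in the $[\delta,c_i^{(1)}]$ and $[\delta,g_i^{(k)}]$ computations where a letter must be moved past $f_j^{(l)}$ with $l\neq k$ via the $V$- and $W$-relations) go through.
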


The third result we want to use is
\begin{lemma}
 In the presentation for $K$ given in Theorem \ref{thmFinPresV1} we can replace the relation $$x_2^{-1}x_1^{-1}\delta^{-2}x_2x_1\left[(x_1c_1^{(r)})^{-1},(x_2c_2^{(r)})^{-1}\right]=1$$ by the relation $$S^{(r)}T^{(r)}=1$$ using Tietze transformations and the identifications $x_i=f_i^{(r)}$, $g_i^{(r)}=(c_i^{(1)})^{-1}f_i^{(r)}c_i^{(r)}$ and $\delta = \left[x_2,x_1\right]d^{-1}$.
 \label{lemProofMT3}
\end{lemma}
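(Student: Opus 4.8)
The plan is to verify the claimed equivalence by a direct Tietze-transformation computation, working in the free group on the generating set of the presentation from Theorem \ref{thmFinPresV1} and reducing modulo the other relations that are available to us. First I would recall, from Proposition \ref{propFinPresB} and the construction in the proof of Theorem \ref{thmFinPresV1}, that the isomorphism between $K$ and the group $H$ sends $c_i^{(r)}\mapsto (f_i^{(r)})^{-1}c_i^{(1)}g_i^{(r)}$ and $\delta\mapsto [f_2^{(r)},f_1^{(r)}]d^{-1}$; equivalently, under the identifications $x_i=f_i^{(r)}$, $g_i^{(r)}=(c_i^{(1)})^{-1}f_i^{(r)}c_i^{(r)}$ and $\delta=[x_2,x_1]d^{-1}$, the relation $\delta^{-2}x_2x_1[(x_1c_1^{(r)})^{-1},(x_2c_2^{(r)})^{-1}]x_2^{-1}x_1^{-1}$ becomes, after substitution, a word in the letters $f_i^{(r)}, c_i^{(1)}, g_i^{(r)}, d$ only.

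The key steps, in order, are as follows. I would first substitute $c_i^{(r)}=(c_i^{(1)})^{-1}f_i^{(r)}g_i^{(r)}$ — wait, more precisely $c_i^{(r)} = (f_i^{(r)})^{-1}c_i^{(1)}g_i^{(r)}$, which is the inverse of the displayed identification — into $(x_ic_i^{(r)})^{-1} = (f_i^{(r)}(f_i^{(r)})^{-1}c_i^{(1)}g_i^{(r)})^{-1} = (c_i^{(1)}g_i^{(r)})^{-1}$, so that the commutator $[(x_1c_1^{(r)})^{-1},(x_2c_2^{(r)})^{-1}]$ simplifies to $[(c_1^{(1)}g_1^{(r)})^{-1},(c_2^{(1)}g_2^{(r)})^{-1}]$. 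Next I would substitute $\delta=[x_2,x_1]d^{-1}$ into the $\delta^{-2}$ prefix; since we are allowed to use the relation $[[x_2,x_1],d]=1$ available from Lemma \ref{lemProofMT1} (or directly from $[\delta,d]$ in $[A,B]$ together with Lemma \ref{lemProofMT1}), the factor $\delta^{-2} = d[x_2,x_1]^{-1}d[x_2,x_1]^{-1}$ can be rearranged to $d^2[x_2,x_1]^{-2}$, or manipulated up to conjugacy as needed. Then I would conjugate the whole relator by $x_1x_2$ (i.e. use that a relator may be cyclically permuted, since cyclic permutation is conjugation) to move the $x_2^{-1}x_1^{-1}\cdots x_2 x_1$ outer layer, turning the relation into one equivalent to $d^2 [x_2,x_1]^{-2} [x_2,x_1] \cdot x_2 x_1 [(c_1^{(1)}g_1^{(r)})^{-1},(c_2^{(1)}g_2^{(r)})^{-1}] (x_2x_1)^{-1}\cdot$ up to reindexing; after collecting the $[x_2,x_1]$ powers this should become $d^2[x_2,x_1]^{-1}$ times a conjugate of the commutator. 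Finally I would match the result against $S^{(r)}T^{(r)} = (f_1^{(r)})^{-1}(f_2^{(r)})^{-1}df_1^{(r)}f_2^{(r)}\cdot (c_1^{(1)}g_1^{(r)})^{-1}(c_2^{(1)}g_2^{(r)})^{-1}dc_1^{(r)}g_1^{(r)}c_2^{(1)}g_2^{(r)}$, rewriting $c_1^{(r)}g_1^{(r)}$ back in terms of the chosen generators where the definition of $T^{(k)}$ demands, and using $[x_2,x_1] = x_2 x_1 x_2^{-1} x_1^{-1}$ to see that $S^{(r)}$ supplies exactly the $d[x_2,x_1]^{\pm 1}$ contribution and $T^{(r)}$ the commutator-of-$c_i^{(1)}g_i^{(r)}$ contribution, modulo $[[x_2,x_1],d]=1$.

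I expect the main obstacle to be bookkeeping: getting the conjugations and cyclic permutations to line up so that the somewhat asymmetric word $\delta^{-2}x_2x_1[\cdots]x_2^{-1}x_1^{-1}$ and the symmetric-looking product $S^{(r)}T^{(r)}$ genuinely coincide as elements of the free group modulo the stated relations, rather than merely up to conjugacy — and to be careful about which of the two directions ($c_i^{(r)}$ in terms of $c_i^{(1)},g_i^{(r)}$, versus $g_i^{(r)}$ in terms of $c_i^{(1)},c_i^{(r)}$) is being used, since Lemma \ref{lemProofMT3} is phrased as a replacement of one relation by another inside the full presentation, so both substitutions are legitimate Tietze moves. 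A secondary subtlety is ensuring that the only auxiliary relation invoked is $[[x_2,x_1],d]=1$ (equivalently $[\delta,d]=1$), which is indeed present in the presentation of Theorem \ref{thmFinPresV1}; no genuinely new relation is needed, so the Tietze transformation is valid, and because all moves are reversible the two presentations define isomorphic groups.
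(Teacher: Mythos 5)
Your opening moves (substituting $x_ic_i^{(r)}=c_i^{(1)}g_i^{(r)}$ into the commutator and rewriting $\delta^{-1}=d\left[x_1,x_2\right]$ in the prefix) agree with how the paper begins, but the decisive part of the lemma is exactly the ``matching'' step that you defer to bookkeeping, and your closing claim that the only auxiliary relation needed is $\left[\left[x_2,x_1\right],d\right]=1$ is false. After the substitutions the old relator is freely equal to $(x_1x_2)^{-1}d(x_1x_2)\cdot S^{(r)}\cdot\left[(c_1^{(1)}g_1^{(r)})^{-1},(c_2^{(1)}g_2^{(r)})^{-1}\right]$, whereas the new relator is $S^{(r)}T^{(r)}$; passing from one to the other requires moving a conjugate of $d$ past the words $c_i^{(1)}g_i^{(r)}=x_ic_i^{(r)}$ and past $x_1x_2$, and the single relation $\left[\left[x_2,x_1\right],d\right]$ gives no control over those conjugations. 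A concrete obstruction: map everything to the Klein bottle group $B=\langle d,c\mid c^{-1}dc=d^{-1}\rangle$ by $d\mapsto d$, $c_1^{(r)}\mapsto c$, and $x_1,x_2,c_2^{(r)}$ (and all remaining generators of $A$) to $1$; the identifications then force $\delta\mapsto d^{-1}$ and $c_1^{(1)}g_1^{(r)}\mapsto c$, $c_2^{(1)}g_2^{(r)}\mapsto 1$. Under this map $\left[\left[x_2,x_1\right],d\right]\mapsto 1$ and $S^{(r)}T^{(r)}\mapsto d\cdot c^{-1}dc=1$ (reading $T^{(r)}=(c_1^{(1)}g_1^{(r)})^{-1}(c_2^{(1)}g_2^{(r)})^{-1}d\,c_1^{(1)}g_1^{(r)}c_2^{(1)}g_2^{(r)}$, as in the paper's proof and in your own display), but the old relator maps to $d^{2}\neq 1$ in $B$. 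Hence the old relation is not a consequence of the new one together with $\left[\left[x_2,x_1\right],d\right]$ alone, so the Tietze replacement cannot be completed the way you describe.

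What is actually needed -- and what the paper's proof supplies -- is a computation invoking several of the other relations of the presentation, with an auxiliary index $l\in\{2,\dots,r-1\}$ (this is where $r\geq 3$ enters): the conjugation relations $x_1dx_1^{-1}=f_1^{(l)}d(f_1^{(l)})^{-1}$, the $V$-type relation giving $df_1^{(l)}f_2^{(r)}=f_2^{(r)}f_1^{(l)}$, the mixed-index relation $\left[c_1^{(1)}g_1^{(l)},c_2^{(1)}g_2^{(r)}\right]d=1$, and the $\left[A,B\right]$ relations $\left[c_j^{(r)},d\right]=\left[c_j^{(r)},f_j^{(l)}\right]=1$. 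With these the paper rewrites the prefix $x_2^{-1}x_1^{-1}\delta^{-2}x_2x_1$ as $S^{(r)}\cdot\left[(f_2^{(r)})^{-1},(f_1^{(l)})^{-1}\right]$, splits $\left[(c_1^{(1)}g_1^{(r)})^{-1},(c_2^{(1)}g_2^{(r)})^{-1}\right]$ into a correction term times a conjugate of $T^{(r)}$, and then shows that the leftover word $\left[(f_2^{(r)})^{-1},(f_1^{(l)})^{-1}\right](c_1^{(1)}g_1^{(r)})^{-1}\left[(c_2^{(1)}g_2^{(r)})^{-1},c_1^{(1)}g_1^{(l)}\right](c_1^{(1)}g_1^{(r)})$ is trivial modulo those relations. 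Your proposal omits this entire computation, which is the substance of the lemma, so as it stands it does not establish the statement.
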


We are now ready to prove Theorem \ref{thmFinPres}.

\begin{proof}[Proof of Theorem \ref{thmFinPres}]
 Start with the presentation for $K_r$ derived in Theorem \ref{thmFinPresV1} and use the identifications $x_i=f_i^{(r)}$, $g_i^{(r)}=(c_i^{(1)})^{-1}f_i^{(r)}c_i^{(r)}$ and $\delta = \left[x_2,x_1\right]d^{-1}$. 
 
 From Lemma \ref{lemProofMT3} we obtain that using Tietze transformations we can replace the relation $$\delta^{-2}x_2x_1\left[(x_1c_1^{(r)})^{-1},(x_2c_2^{(r)})^{-1}\right]x_2^{-1}x_1^{-1}$$ by the relation $$S^{(r)}T^{(r)}.$$
 
 Lemma \ref{lemProofMT2} implies that we can remove the relations
  \[
  \left\{\begin{array}{l}
     \left[d,c_i^{(1)}\right],\left[\delta,c_i^{(1)}\right],\left[\delta,f_i^{(k)}\right],\left[\delta,g_i^{(k)}\right],\\ i=1,2,k=1,\cdots,r-1
  \end{array}\right\},
 \]
 from our presentation.
 
 Lemma \ref{lemProofMT1} implies that we can replace the set of relations
  \[
  \left\{\begin{array}{l}
  \left[c_i^{(r)},c_j^{(1)}\right],\left[c_i^{(r)},f_j^{(k)}\right],\left[c_i^{(r)},g_j^{(k)}\right],\\
  \left[\delta,d\right], i=1,2,k=2,\cdots,r-1
  \end{array}
  \right\}\subseteq \left[A,B\right]
 \]
 by the set of relations
 \[
  \left\{\begin{array}{l}
  \left[c_i^{(1)},g_j^{(r)}\right]\left[(c_j^{(1)})^{-1}f_j^{(r)},c_i^{(1)}\right],\left[f_i^{(k)},g_j^{(r)}\right]W_{i,j,1}^{-1},\left[c_i^{(1)}g_i^{(r)},c_j^{(1)}g_j^{(k)}\right]V_{i,j,1}^{-1},\\
  \left[\left[f_2^{(r)},f_1^{(r)}\right],d\right] i,j=1,2,k=2,\cdots,r-1
  \end{array}\right\}.
 \]
 
 The identification $c_i^{(1)}=c_i$ thus shows that $K_r$ is isomorphic to the group with the finite presentation
 {\small \[K=K_r=
\sbox0{$
  \begin{array}{l} 
   c_i,d,f_i^{(k)},g_i^{(k)},\\
   k=2,\cdots,r,i=1,2\\
  \end{array}
  \left|
  \begin{array}{l}
   (f_i^{(k)})^{\eps}c_j(f_i^{(k)})^{-\eps}(f_i^{(l)})^{\eps}(c_j)^{-1}(f_i^{(l)})^{-\eps},\\ (f_i^{(k)})^{\eps}d(f_i^{(k)})^{-\eps}(f_i^{(l)})^{\eps}d^{-1}(f_i^{(l)})^{-\eps},\left[\left[f_1^{(r)},f_2^{(r)}\right], d\right],\\
   \left[c_i,g_{j}^{(k)}\right] \left[(c_j)^{-1}f_j^{(k)},c_i\right],~ \left[f_i^{(k)},f_j^{(l)}\right]V_{i,j,1}^{-1},~\left[f_i^{(k)},g_j^{(l)}\right]W_{i,j,1}^{-1},\\  \left[c_ig_i^{(k)},c_jg_j^{(l)}\right] V_{i,j,1}^{-1},\\
   d^{-1}c_1^{-1}f_1^{(k)}c_2^{-1}(f_1^{(k)})^{-1}d^{-1}f_2^{(k)}c_1 (f_2^{(k)})^{-1}c_2,~ S^{(k)}\cdot T^{(k)},\\
   i,j=1,2, \eps=\pm 1, k,l=2,\cdots, r, l\neq k
  \end{array}
  \right.
  $}
\mathopen{\resizebox{1.2\width}{\ht0}{$\Bigg\langle$}}
\usebox{0}
\mathclose{\resizebox{1.2\width}{\ht0}{$\Bigg\rangle$}}.
\]}
\end{proof}

We will now prove the three lemmas:

\begin{proof}[Proof of Lemma \ref{lemProofMT1}]
 To simplify notation we enumerate the relations as follows:
 \begin{enumerate}
 \item $\left[c_i^{(1)},g_j^{(r)}\right]\left[(c_j^{(1)})^{-1}f_j^{(r)},c_i^{(1)}\right]=1$
 \item $\left[f_i^{(k)},g_j^{(r)}\right]W_{i,j,1}^{-1}=1$
 \item $\left[c_i^{(1)}g_i^{(r)},c_j^{(1)}g_j^{(l)}\right]V_{i,j,1}^{-1}=1$
 \item $\left[\left[x_2,x_1\right],d\right]=1$
 \end{enumerate}
 
 The following computation shows that relation (1) is equivalent to $\left[c_i^{(r)},c_j^{(1)}\right]=1$:
\begin{flalign*}
  \left[c_i^{(1)},g_j^{(r)}\right]\left[(c_j^{(1)})^{-1}f_j^{(r)},c_i^{(1)}\right] &=c_i^{(1)}(c_j^{(1)})^{-1}f_j^{(r)}c_j^{(r)}(c_i^{(1)})^{-1}(c_j^{(r)})^{-1}(f_j^{(r)})^{-1}c_j^{(1)}\left[(c_j^{(1)})^{-1}f_j^{(r)},c_i^{(1)}\right]&\\
  &=c_i^{(1)}(c_j^{(1)})^{-1}f_j^{(r)}(c_i^{(1)})^{-1}c_j^{(r)}(c_j^{(r)})^{-1}(f_j^{(r)})^{-1}c_j^{(1)}\left[(c_j^{(1)})^{-1}f_j^{(r)},c_i^{(1)}\right]&=1 \\
 \end{flalign*}

Now we show that relation (2) is equivalent to the relation $\left[c_j^{(r)},f_i^{(k)}\right]=1$:
\begin{flalign*}
   \left[f_i^{(k)},g_j^{(r)}\right] W_{i,j,1}^{-1} &= f_i^{(k)}(c_j^{(1)})^{-1}f_j^{(r)}c_j^{(r)}(f_i^{(k)})^{-1}(c_j^{(r)})^{-1}(f_j^{(r)})^{-1}c_j^{(1)} W_{i,j,1}^{-1} &\\
   &=f_i^{(k)}(c_j^{(1)})^{-1}f_j^{(r)}(f_i^{(k)})^{-1}c_j^{(r)}(c_j^{(r)})^{-1}(f_j^{(r)})^{-1}c_j^{(1)} W_{i,j,1}^{-1} &\\
   &=\left[f_i^{(k)}, (c_j^{(1)})^{-1}x_j\right] W_{i,j,1}^{-1}=1
\end{flalign*}

Next we show that relation (3) is equivalent to $\left[c_j^{(r)},g_j^{(k)}\right]$ modulo all other relations:
\begin{flalign*}
    \left[c_i^{(1)}g_i^{(r)},c_j^{(1)}g_j^{(l)}\right] V_{i,j,1}^{-1}
    &= f_i^{(r)}c_i^{(r)}c_j^{(1)}g_j^{(l)}(c_i^{(r)})^{-1}(f_i^{(r)})^{-1} (g_j^{(l)})^{-1} (c_j^{(1)})^{-1} V_{i,j,1}^{-1}&\\
    &=f_i^{(r)}c_j^{(1)}g_j^{(l)}c_i^{(r)}(c_i^{(r)})^{-1}(f_i^{(r)})^{-1} (g_j^{(l)})^{-1} (c_j^{(1)})^{-1} V_{i,j,1}^{-1}&\\ 
    &=f_i^{(r)}c_j^{(1)}(f_i^{(r)})^{-1}W_{i,j,1}(c_j^{(1)})^{-1}V_{i,j,1}^{-1}=1 &
\end{flalign*}
Note that here the only relation from $\left[A,B\right]$ that we use is $\left[c_i^{(r)},c_j^{(1)}g_j^{(l)}\right]=1$ which modulo $\left[c_i^{(r)},c_j^{(1)}\right]=1$ is equivalent to $\left[c_i^{(r)},g_j^{(l)}\right]=1$. This means that modulo (1) the relation $\left[c_i^{(r)},g_j^{(l)}\right]=1$ is equivalent to the relation (3).

Equivalence of relation (4) and $\left[\delta,d\right]$ is immediate from $\delta=\left[x_2,x_1\right]d^{-1}$. 
\end{proof}

\begin{proof}[Proof of Lemma \ref{lemProofMT2}]

We need to show that the following relations follow from the relations in $\mathcal{M}^C$:
\begin{enumerate}
 \item $\left[c_i^{(r)},d\right]=1$
 \item $\left[\delta,c_i^{(1)}\right]=1$
 \item $\left[\delta,f_i^{(k)}\right]=1$
 \item $\left[\delta,g_i^{(k)}\right]=1$
\end{enumerate}

For (1), given $r\geq 4$ we can choose $2\leq l\neq k\leq r-1$ with
\[
\left[c_i^{(r)},d\right]=\left[c_i^{(r)},\left[c_1^{(1)}g_1^{(k)},c_2^{(1)}g_2^{(l)}\right]\right]=1.
\]

For (2), using $d=\left[f_2^{(r)},f_1^{(l)}\right]$ we obtain that it suffices to prove that
\[
 (c_i^{(1)})^{-1}f_2^{(r)}f_1^{(r)}(f_2^{(r)})^{-1}f_2^{(k)}(f_1^{(r)})^{-1}(f_2^{(k)})^{-1}c_i^{(1)}=f_2^{(r)}f_1^{(r)}(f_2^{(r)})^{-1}f_2^{(k)}(f_1^{(r)})^{-1}(f_2^{(k)})^{-1}
\]

This equality is a consequence of the relations $\left[f_i^{(k)},f_j^{(l)}\right]=V_{i,j,\epsilon}=d^{\epsilon}$, $f_2^{(k)}d(f_2^{(k)})^{-1}=f_2^{(r)}d(f_2^{(r)})^{-1}$, and $f_2^{(k)}c_i^{(1)}(f_2^{(k)})^{-1}=f_2^{(r)}c_i^{(1)}(f_2^{(r)})^{-1}$:
\[\begin{split}
 ~& (c_i^{(1)})^{-1}f_2^{(r)}f_1^{(r)}(f_2^{(r)})^{-1}f_2^{(k)}(f_1^{(r)})^{-1}(f_2^{(k)})^{-1}c_i^{(1)}\\
 =&f_2^{(r)}(f_2^{(l)})^{-1}(c_i^{(1)})^{-1}df_1^{(r)}f_2^{(l)}(f_2^{(r)})^{-1}f_2^{(k)}(f_2^{(l)})^{-1}(f_1^{(r)})^{-1}d^{-1}c_i^{(1)}f_2^{(l)}(f_2^{(r)})^{-1} \\
 =& f_2^{(r)}(f_2^{(l)})^{-1}f_1^{(r)}(f_1^{(l)})^{-1}(c_i^{(1)})^{-1}df_1^{(l)}(f_2^{(r)})^{-1}f_2^{(k)}(f_1^{(l)})^{-1}d^{-1}c_i^{(1)}f_1^{(l)}(f_1^{(r)})^{-1}f_2^{(l)}(f_2^{(r)})^{-1}\\
 =&f_2^{(r)}(f_2^{(l)})^{-1}f_1^{(r)}(f_1^{(l)})^{-1}(c_i^{(1)})^{-1}d(f_2^{(r)})^{-1}df_1^{(l)}(f_1^{(l)})^{-1}d^{-1}f_2^{(k)}d^{-1}c_i^{(1)}f_1^{(l)}(f_1^{(r)})^{-1}f_2^{(l)}(f_2^{(r)})^{-1}\\
 =&f_2^{(r)}(f_2^{(l)})^{-1}f_1^{(r)}(f_1^{(l)})^{-1}(c_i^{(1)})^{-1}d(f_2^{(r)})^{-1}f_2^{(k)}d^{-1}c_i^{(1)}f_1^{(l)}(f_1^{(r)})^{-1}f_2^{(l)}(f_2^{(r)})^{-1}\\
  =& f_2^{(r)}(f_2^{(l)})^{-1}f_1^{(r)}(f_2^{(r)})^{-1}f_2^{(k)}(f_1^{(r)})^{-1}f_2^{(l)}(f_2^{(r)})^{-1}\\
  =& f_2^{(r)}f_1^{(r)}(f_2^{(r)})^{-1}f_2^{(k)}(f_1^{(r)})^{-1}(f_2^{(r)})^{-1}\\
\end{split}\]

where the last equality follows from the vanishing
\[\begin{split}
  \mbox{ } &(f_2^{(l)})^{-1}f_1^{(r)}(f_2^{(r)})^{-1}f_2^{(k)}(f_1^{(r)})^{-1}f_2^{(l)}f_1^{(r)}(f_2^{(k)})^{-1}f_2^{(r)}(f_1^{(r)})^{-1}\\
  &\sim \left[f_2^{(l)},(f_1^{(r)})^{-1}\right](f_2^{(r)})^{-1}f_2^{(k)}\left[(f_1^{(r)})^{-1},f_2^{(l)}\right](f_2^{(k)})^{-1}f_2^{(r)}\\
  & \sim (f_2^{(k)})^{-1}(f_2^{(r)})^{-1}d f_2^{(r)}f_2^{(k)}(f_2^{(r)})^{-1}(f_2^{(k)})^{-1}d^{-1} f_2^{(k)}f_2^{(r)}=1
\end{split}\]

Using $\left[\left[x_1,x_2\right],d\right]$ we show that relation (3) follows from $\mathcal{M}^C$. We prove the case $i=1$, the case $i=2$ is similar.

\[\begin{split}
  &\left[\left[x_2,x_1\right]d^{-1},f_1^{(k)}\right]\\
  &= f_2^{(r)}f_1^{(r)}(f_2^{(r)})^{-1}(f_1^{(r)})^{-1}d^{-1}f_1^{(k)}df_1^{(r)}f_2^{(r)}(f_1^{(r)})^{-1}(f_2^{(r)})^{-1}(f_1^{(k)})^{-1}\\
  &\sim \left((f_2^{(r)})^{-1}(f_1^{(k)})^{-1}d^{-1}f_2^{(r)}f_1^{(k)}\right)(f_1^{(k)})^{-1}f_1^{(r)}(f_2^{(r)})^{-1}(f_1^{(r)})^{-1}f_1^{(k)}df_1^{(r)}f_2^{(r)}(f_1^{(r)})^{-1}\\
  &=f_1^{(r)}(f_1^{(k)})^{-1}(f_2^{(r)})^{-1}f_1^{(k)}(f_1^{(r)})^{-1}df_1^{(r)}f_2^{(r)}(f_1^{(r)})^{-1}\\
  &=f_1^{(r)}(f_2^{(r)})^{-1}(f_1^{(k)})^{-1}d^{-1}f_1^{(k)}(f_1^{(r)})^{-1}df_1^{(r)}f_2^{(r)}(f_1^{(r)})^{-1}\\
  &=f_1^{(r)}(f_2^{(r)})^{-1}(f_1^{(r)})^{-1}d^{-1}f_1^{(r)}(f_1^{(r)})^{-1}df_1^{(r)}f_2^{(r)}(f_1^{(r)})^{-1}=1
\end{split}\]

We finish the proof by showing that relation (4) follows from the relations in $\mathcal{M}^C$. This is equivalent to proving that modulo $\mathcal{M}^C$ the equality
\[
 g_i^{(k)}f_2^{(r)}f_1^{(r)}(f_2^{(r)})^{-1}f_2^{(l)}(f_1^{(r)})^{-1}(f_2^{(l)})^{-1}(g_i^{(k)})^{-1}=f_2^{(r)}f_1^{(r)}(f_2^{(r)})^{-1}f_2^{(l)}(f_1^{(r)})^{-1}(f_2^{(l)})^{-1}
\]
holds.

Using $\left[f_i^{(l)},g_j^{(k)}\right]W_{i,j,1}^{-1}=1$ we obtain that
\[\begin{split}
  &g_i^{(k)}f_2^{(r)}f_1^{(r)}(f_2^{(r)})^{-1}f_2^{(l)}(f_1^{(r)})^{-1}(f_2^{(l)})^{-1}(g_i^{(k)})^{-1}\\
  =& W_{2,i,1}^{-1}f_2^{(r)}W_{1,i,1}^{-1}f_1^{(r)}(f_2^{(r)})^{-1}f_2^{(l)}(f_1^{(r)})^{-1}W_{1,i,1}(f_2^{(l)})^{-1} W_{2,i,1}
\end{split}\]

As in (3) we distinguish the cases $i=1,2$. Again we will only show how to prove the case $i=1$, the case $i=2$ being similar:
\[\begin{split}
  &W_{2,1,1}^{-1}f_2^{(r)}W_{1,1,1}^{-1}f_1^{(r)}(f_2^{(r)})^{-1}f_2^{(l)}(f_1^{(r)})^{-1}W_{1,1,1}(f_2^{(l)})^{-1} W_{2,1,1}\\
  =& W_{2,1,1}^{-1}f_2^{(r)}(c_1^{(1)})^{-1}f_1^{(r)}c_1^{(1)}(f_1^{(r)})^{-1}f_1^{(r)}(f_2^{(r)})^{-1}f_2^{(l)}(f_1^{(r)})^{-1}f_1^{(r)}(c_1^{(1)})^{-1}(f_1^{(r)})^{-1}c_1^{(1)}(f_2^{(l)})^{-1} W_{2,1,1}\\
  =& W_{2,1,1}^{-1}f_2^{(r)}f_1^{(r)}(f_2^{(r)})^{-1}f_2^{(l)}(f_1^{(r)})^{-1}(f_2^{(l)})^{-1} W_{2,1,1}\\
  =& (c_1^{(1)})^{-1}df_2^{(r)}c_1^{(1)}(f_2^{(r)})^{-1}f_2^{(r)}f_1^{(r)}(f_2^{(r)})^{-1}f_2^{(l)}(f_1^{(r)})^{-1}(f_2^{(l)})^{-1}f_2^{(l)}(c_1^{(1)})^{-1}(f_2^{(l)})^{-1}d^{-1}c_1^{(1)}\\
  =& f_2^{(r)}f_1^{(r)}(f_2^{(r)})^{-1}f_2^{(l)}(f_1^{(r)})^{-1}(f_2^{(l)})^{-1}
\end{split}\]
where in the last equality we use $\left[\delta,c_i^{(1)}\right]=1$ and $\left[d,\left[x_1,x_2\right]\right]=1$. Hence, relation (4) indeed follows from the relations $\mathcal{M}^C$, completing the proof.
\end{proof}

\begin{proof}[Proof of Lemma \ref{lemProofMT3}]
 First observe that using $\delta^{-1}=\left[x_1,x_2\right] d$,\\ $\left[\left[x_1,x_2\right],d\right]$, $df_1^{(r)}f_2^{(r)}=f_1^{(r)}(f_1^{(l)})^{-1}df_1^{(l)}f_2^{(r)}$ and $df_1^{(l)}f_2^{(r)}=f_2^{(r)}f_1^{(l)}$, we obtain
\[\begin{split}
  x_2^{-1}x_1^{-1}\delta^{-2}x_2x_1&= \left\{(f_1^{(r)})^{-1}(f_2^{(r)})^{-1}df_1^{(r)}f_2^{(r)}\right\}(f_2^{(r)})^{-1}(f_1^{(l)})^{-1}f_2^{(r)}f_1^{(l)}.\\
\end{split}\]

Using that $\left[c_1^{(1)}g_1^{(l)},c_2^{(1)}g_2^{(r)}\right]d=1$ we obtain
\[\begin{split}
 &\left[(c_1^{(1)}g_1^{(r)})^{-1},(c_1^{(1)}g_2^{(r)})^{-1}\right]\\
 =&(c_1^{(1)}g_1^{(r)})^{-1}(c_2^{(1)}g_2^{(r)})^{-1}(c_1^{(1)}g_1^{(l)})(c_2^{(1)}g_2^{(r)})(c_1^{(1)}g_1^{(l)})^{-1}(c_1^{(1)}g_1^{(r)})\\ & \cdot \left\{(c_1^{(1)}g_1^{(r)})^{-1}(c_2^{(1)}g_2^{(r)})^{-1}d(c_1^{(1)}g_1^{(r)})(c_2^{(1)}g_2^{(r)})\right\}\\
\end{split}\]

But this means that the equivalence of the two relations is equivalent to
\[
 \left[(f_2^{(r)})^{-1},(f_1^{(l)})^{-1}\right](c_1^{(1)}g_1^{(r)})^{-1}\left[(c_2^{(1)}g_2^{(r)})^{-1},(c_1^{(1)}g_1^{(l)})\right](c_1^{(1)}g_1^{(r)})=1
\]
modulo all other relations.

We will prove that this term does indeed vanish
\[\begin{split}
  &\left[(f_2^{(r)})^{-1},(f_1^{(l)})^{-1}\right](c_1^{(1)}g_1^{(r)})^{-1}\left[(c_2^{(1)}g_2^{(r)})^{-1},(c_1^{(1)}g_1^{(l)})\right](c_1^{(1)}g_1^{(r)})\\
  &=(f_2^{(r)})^{-1}(f_1^{(l)})^{-1}df_1^{(l)}f_2^{(r)}(c_1^{(1)}g_1^{(r)})^{-1}(c_2^{(1)}g_2^{(r)})^{-1}d^{-1}(c_2^{(1)}g_2^{(r)})(c_1^{(1)}g_1^{(r)})\\
  &\stackrel{c_i^{(1)}g_i^{(r)}=f_i^{(r)}c_i^{(r)}}{=} (f_2^{(r)})^{-1}(f_1^{(l)})^{-1}df_1^{(l)}f_2^{(r)}(c_1^{(1)}g_1^{(r)})^{-1}(c_2^{(r)})^{-1}(f_2^{(r)})^{-1}d^{-1}f_2^{(r)}c_2^{(r)}(c_1^{(1)}g_1^{(r)})\\
\end{split}\]

Finally the relations $\left[c_j^{(r)},d\right]=1$, $\left[c_j^{(r)},f_j^{(l)}\right]=1$, $(f_i^{(r)})^{-1}d^{-1}f_i^{(r)}=(f_i^{(k)})^{-1}d^{-1}f_i^{(k)}$,\\ $(f_1^{(l)})^{-1}(f_2^{(k)})^{-1}d^{-1}f_2^{(k)}f_1^{(l)}=(f_2^{(k)})^{-1}(f_1^{(l)})^{-1}d^{-1}f_1^{(l)}f_2^{(k)}$, and $c_1^{(1)}g_1^{(r)}=f_1^{(r)}c_1^{(r)}$ imply
\[\begin{split}
 &(f_2^{(r)})^{-1}(f_1^{(l)})^{-1}df_1^{(l)}f_2^{(r)}(c_1^{(1)}g_1^{(r)})^{-1}(c_2^{(r)})^{-1}(f_2^{(r)})^{-1}d^{-1}f_2^{(r)}c_2^{(r)}(c_1^{(1)}g_1^{(r)})\\
 &=(f_2^{(r)})^{-1}(f_1^{(l)})^{-1}df_1^{(l)}f_2^{(r)}(f_1^{(l)})^{-1}(f_2^{(r)})^{-1}d^{-1}f_2^{(r)}f_1^{(l)}\sim ddd^{-1}d^{-1}=1 
\end{split}\]

Hence, we can replace the relation $x_2^{-1}x_1^{-1}\delta^{-2}x_2x_1\left[(x_1c_1^{(r)})^{-1},(x_2c_2^{(r)})^{-1}\right]=1$ by the relation $S^{(r)}\cdot T^{(r)}=1$ using Tietze transformations.
\end{proof}

\bibliography{References}
\bibliographystyle{amsplain}

\end{document}